\pgfplotsset{compat=newest}
\pgfplotsset{plot coordinates/math parser=false}
\newlength\fheight
\newlength\fwidth
\newcommand{\Prob}[2]{\mathbb P_{#1} \left(#2\right)}
\newcommand{\E}[2]{\mathbb E_{#1}\left[#2\right]}
\newcommand{\Var}[1]{\mathbb{V} \left[#1\right]}
\DeclareMathOperator*{\Tr}{Tr}
\newcommand{\eps}{\varepsilon}
\newcommand{\1}{{\mathbf 1}}
\newcommand{\calS}{\mathcal{S}}
\begin{document}

\newtheorem{theorem}{Theorem}[section]
\newtheorem{remark}[theorem]{Remark}
\newtheorem{corollary}[theorem]{Corollary}
\newtheorem{lemma}[theorem]{Lemma}
\newtheorem{ex}[theorem]{Example}
\newtheorem{ass}{Assumption}

\begin{center}
\begin{minipage}{.8\textwidth}
\centering 
\LARGE Bump detection in heterogeneous Gaussian regression\\[0.5cm]

\normalsize
\textsc{Farida Enikeeva}\\[0.1cm]
\verb+farida.enikeeva@math.univ-poitiers.fr+\\
Laboratoire de Math\'ematiques et Applications, Universit\'e de Poitiers, France \\
and \\
Institute for Information Transmission Problems, Russian Academy of Science, Moscow, Russia\\[0.1cm]

\textsc{Axel Munk, Frank Werner}\footnotemark[1]\\[0.1cm]
\verb+munk@math.uni-goettingen.de, frank.werner@mpibpc.mpg.de+\\
Felix Bernstein Institute for Mathematical Statistics in the Bioscience, University of G\"ottingen\\
and\\
and Max Planck Institute for Biophysical Chemistry, G\"ottingen, Germany

\end{minipage}
\end{center}

\footnotetext[1]{Corresponding author}

\begin{abstract}

We analyze the effect of a heterogeneous variance on bump detection in a Gaussian regression model. To this end we allow for a simultaneous bump in the variance and specify its impact on the difficulty to detect the null signal against a single bump with known signal strength. This is done by calculating lower and upper bounds, both based on the likelihood ratio. 

Lower and upper bounds together lead to explicit characterizations of the detection boundary in several subregimes depending on the asymptotic behavior of the bump heights in mean and variance. In particular, we explicitly identify those regimes, where the additional information about a simultaneous bump in variance eases the detection problem for the signal. This effect is made explicit in the constant and / or the rate, appearing in the detection boundary.

We also discuss the case of an unknown bump height and provide an adaptive test and some upper bounds in that case.

\end{abstract}

{\it Keywords:}  minimax testing theory, heterogeneous Gaussian regression, change point detection. \\[0.1cm]

{\it AMS classification numbers: } Primary 62G08, 62G10, Secondary 60G15, 62H15. \\[0.3cm]

\section{Introduction}\label{sec:intro}

Assume that we observe random variables $Y = \left(Y_1,...,Y_n\right)$ through the regression model 
\begin{equation}\label{eq:model}
Y_i = \mu_n \left(\frac{i}{n}\right) + \lambda_n\left(\frac{i}{n}\right)Z_i, \qquad i = 1,...,n, 
\end{equation}
where $Z_i \stackrel{\text{i.i.d}}{\sim} \mathcal N \left(0,1\right)$ are observational errors, $\mu_n$ denotes the mean function and $\lambda_n^2$ is the variance function sampled at $n$ equidistant points $i/n$, say.  The aim of this paper is to analyze the effect of a simultaneous change in the variance on the difficulty to detect $\mu_n \equiv 0$ against a mean of the form
\begin{equation}\label{eq:bump}
\mu_n\left(x\right) = \Delta_n 1_{I_n} \left(x\right) = \begin{cases} \Delta_n & \text{if } x \in I_n, \\ 0 & \text{otherwise},\end{cases}
\end{equation}
i.e. a bump with height $\Delta_n>0$ on an interval $I_n \subset\left[0,1\right]$ as location. 
The assumption that $\Delta_n>0$ is posed only for simplicity here, if $\Delta_n$ is negative or $\left|\mu_n\right|$ is considered, similar results can be obtained analogously. Further, in model \eqref{eq:model} we assume a simultaneous change of the variance by
\begin{equation}\label{eq:model_var}
\lambda_n^2\left(x\right) = \sigma_0^2 + \sigma_n^2 1_{I_n}\left(x\right),\quad x \in \left[0,1\right]
\end{equation}
with a ``baseline'' variance $\sigma_0^2>0$. Note that the additional change in the variance $\sigma_n^2>0$ may only occur when the mean has a bump of size $\Delta_n$ on $I_n$. We assume throughout this paper that $\sigma_n^2$ and $\sigma_0^2$ are known, and that also the length $\left|I_n\right|$ of $I_n$ is known. Note that as the position of $I_n$ is unknown, both functions $\mu_n$ and $\lambda_n$ are (partially) unknown as well. Concerning $\Delta_n$ we will later distinguish between the cases that $\Delta_n$ is known (to which we will provide sharp results) and unknown.

The model arising from \eqref{eq:model}--\eqref{eq:model_var} with the above described parameters will be called the \textbf{heterogeneous bump regression (HBR)} model, which is illustrated in Figure \ref{fig:model}. The aim of this paper is to provide some first insights for precise understanding of the effect of the heterogeneity on detection of the signal bump.

\begin{figure}[!htb]
\setlength\fheight{3cm} \setlength\fwidth{12cm}
\centering
\input{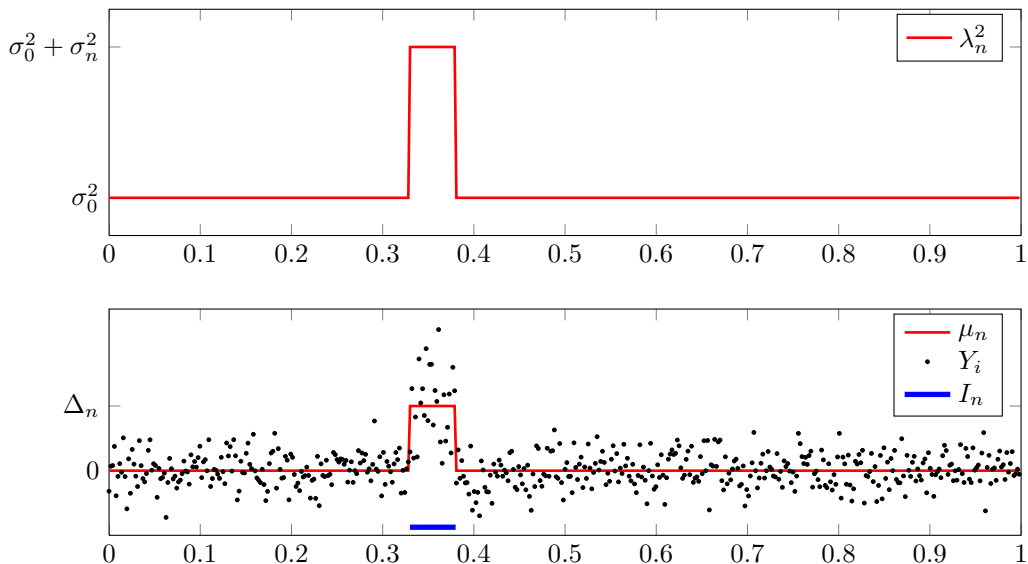}
\caption{The HBR model: Data together with parameters $I_n, \Delta_n, \sigma_0$ and $\sigma_n$ in \eqref{eq:model}--\eqref{eq:model_var} from the HBR. Here $\Delta_n = 4$, $\sigma_0^2 = 1$, $\sigma_n^2 = 4$ and $n = 512$.}
\label{fig:model}
\end{figure}

\subsection{Applications}

The HBR model may serve as a prototypical simplification of more complex situations, where many bumps occur and have to be identified. In fact, in many practical situations it is to be expected that the variance changes when a change of the mean is present; see e.g. \citep{ma10} for a discussion of this in the context of CGH array analysis. Another example arises in econometrics and financial analysis, where it has been argued that price and volatility do jump together \citep{bp03,jt10}. The HBR model can also be viewed as a heterogeneous extension of the ``needle in a haystack'' problem for $\left|I_n\right| \searrow 0$, i.e. to identify s small cluster of non-zero components in a high-dimensional multivariate vector \citep{castro2008finding,dj04, achz08, Baraud:2002, Ingster:2002b}.

\smallskip

Finally we mention the detection of opening and closing states in ion channel recordings (see \citep{ns95} and the references therein), when the so-called open channel noise is present which arises for large channels, e.g. porins \citep{s85,s98}. A small segment of a current recording of a porin in planar lipid bilayers performed in the Steinem lab (Institute of Organic and Biomolecular Chemistry University of G\"ottingen) is depicted in Figure \ref{fig:ion}. We find that the observations fluctuate around two main levels, which correspond to the open and closed state. It is also apparent that the variance changes together with the mean.

\begin{figure}[!htb]
\setlength\fheight{3cm} \setlength\fwidth{12cm}
\centering
\input{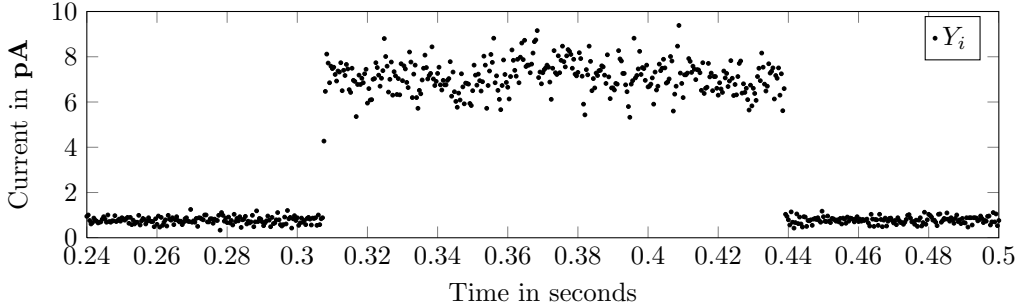}
\caption{Current recordings of a porin in picoampere (\textbf{pA}) sampled every 0.4 milliseconds.}
\label{fig:ion}
\end{figure}

\subsection{Detailed setting and scope of this paper}

Let us now formalize the considered testing problem. Let $I_n \subset \left[0,1\right]$, $n \in\mathbb N$ a given sequence of intervals. W.l.o.g. assume in the following that $l_n := 1/\left|I_n\right|$ only takes values in the integers, i.e. $l_n\in \mathbb N$ for all $n \in \mathbb N$ and define
\begin{equation}\label{eq:defi_A}
\mathcal A_n := \left\{\left[\left(j-1\right) \left|I_n\right|, j \left|I_n\right|\right]~\big|~ 1 \leq j \leq l_n\right\}.
\end{equation}
Then for fixed $n \in \mathbb N$ we want to test
\begin{equation}\label{eq:test_prob}
\begin{aligned}
H_0& : \quad \mu_n\equiv 0, \lambda_n \equiv \sigma_0, \quad \sigma_0>0 \text{ fixed} \\
&\qquad\qquad\text{against}\\
H_1^n& : \exists\ I_n \in \mathcal A_n \text{ s.t. } \mu_n = \Delta_n 1_{I_n}, \ \lambda_n^2 = \sigma_0^2 + \sigma_n^2 1_{I_n}, \quad \sigma_n>0.
\end{aligned}
\end{equation}

Throughout the most of this paper we assume that the parameters $\Delta_n, \sigma_0, \sigma_n$ and $\left|I_n\right|$ are known, only the position $I_n$ of the bump is unknown. Note, that if the location $I_n$ of the bump was also known (and not only its width $\left|I_n\right|$), then the whole problem reduces to a hypothesis test with simple alternative, i.e. the Neyman-Pearson test will be optimal. This situation may be viewed as a variation of the celebrated Behrens-Fisher problem (see e.g. \citep{lr05}) to the situation when the variance changes with a mean change.

The requirement that $\Delta_n$ is known will be relaxed later on.

\smallskip

The main contribution of this paper is to precisely determine the improvement coming from the additional change in variance by determining the detection boundary for \eqref{eq:test_prob}. We stress that this issue has not been addressed before to the authors best knowledge, and it has been completely unclear if and to what extend the additional change in variance adds information to the model so far. We will provide some partial answers to this.

The testing problem \eqref{eq:test_prob} is in fact a simplification of the practically more relevant problem where the interval $I_n$ in the alternative is not necessarily an element of $\mathcal A_n$, but can be arbitrary in $\left[0,1\right]$ with the predefined length $\left|I_n\right|$. Furthermore the parameters $\left|I_n\right|$, $\sigma_0$ and $\sigma_n$ will also be unknown in most practical situations, although often for the in Section 1.1 mentioned channel recordings the variances and the conductance level $\Delta_n$ can be pre-estimated with high accuracy, and hence regarded as practically known. In fact, lack of knowledge of all these quantities leads to a much more complex problem and raises already a remarkable challenge for designing an efficient test, which is far above the scope of this paper. Instead, we aim for analyzing the amount of information contained in the most simple HBR model compared to the homogeneous case, and therefore we restrict ourselves to the simplified (but still quite complicated) situation \eqref{eq:test_prob}. Note that our lower bounds for the detection boundary are also lower bounds for the general situation (although we speculate that then not sharp).

\smallskip

Our results are achieved on the one hand by modifying a technique from D\"umbgen \& Spokoiny \citep{ds01} providing lower bounds in the homogeneous model. We will generalize this to the case of a non-central chi-squared likelihood ratio which appears due to the change in variance. On the other hand we will analyze the likelihood ratio test for \eqref{eq:test_prob} which then provides upper bounds. 
Doing so we use a generalized deviation inequality for the weighted sum of non-central chi-squared random variables from \citep{llm12} (see Appendix \ref{app:tails} for a discussion of this and related inequalities). The authors there also study a heterogeneous regression problem, but in a setting which is not related to bump detection.

\subsection{Analytical tools and literature review}

\paragraph{Minimax testing theory.} In this paper, the analysis of the HBR model is addressed in the context of minimax testing theory, and we refer e.g. to a series of papers by Ingster \citep{i93} and to Tsybakov \citep{t09} for the general methodology. Following this paradigm we aim for determining the detection boundary which marks the borderline between asymptotically detectable and undetectable signals of type \eqref{eq:bump}. 

In the following we consider tests $\Phi_n : \mathbb R^n \to \left\{0,1\right\}$, where $\Phi_n\left(Y\right) = 0$ means that we \textbf{accept} the hypothesis $H_0$ and $\Phi_n \left(Y\right) = 1$ means that we \textbf{reject} the hypothesis. We denote the probability of the type I error by
\[
\bar\alpha \left(\Phi_n, \sigma_0\right) := \E{H_0}{\Phi_n\left(Y\right)} = \Prob{H_0}{\Phi_n\left(Y\right) =1}
\]
and say that the sequence of tests $\Phi_n$ has asymptotic level $\alpha \in \left(0,1\right)$ under $H_0$, if $\limsup\limits_{n \to \infty} \bar\alpha \left(\Phi_n, \sigma_0\right)\leq \alpha$. The type II error is denoted by 
\[
\bar\beta \left(\Phi_n, \Delta_n,\left|I_n\right|,\sigma_0,\sigma_n\right) := \sup_{I_n \in \mathcal A_n} \Prob{\mu_n = \Delta_n 1_{I_n}, \ \lambda_n^2 = \sigma_0^2 + \sigma_n^2 1_{I_n}}{\Phi_n\left(Y\right) = 0}
\]
and $\limsup\limits_{n \to \infty}\bar\beta \left(\Phi_n, \Delta_n,\left|I_n\right|,\sigma_0,\sigma_n\right)$ is the asymptotic type II error of the sequence of tests $\Phi_n$ under hypothesis $H_0$ and alternative $H_1^n$ determined by the parameters $\Delta_n, \left|I_n\right|, \sigma_0$ and $\sigma_n$. Below we will skip the dependence on the parameters to ease the presentation.

\smallskip

Obviously, a fixed bump can be detected always with the sum of type I and II errors tending to $0$ as $n \to \infty$. Thus we are interested in asymptotically vanishing bumps, hence we want to investigate which asymptotic behavior of $\Delta_n$, $\sigma_n$ and $\left|I_n\right|\searrow 0$ implies the following twofold condition for a fixed $\alpha>0$ (cf. \cite{kk11}):
\begin{itemize}
\item[a)]\textit{lower detection bound:} let $\tilde H_1^n$ consist of faster vanishing bumps. Then for any test with $\limsup\limits_{n \to \infty} \bar\alpha \left(\Phi_n\right)\leq \alpha$ it holds $\liminf\limits_{n \to \infty} \bar\beta \left(\Phi_n\right) \geq 1-\alpha$.
\item[b)]\textit{upper detection bound:} let $\tilde H_1^n$ consist of slower vanishing bumps. Then there is a sequence of tests with $\limsup\limits_{n \to \infty} \bar\alpha \left(\Phi_n\right)\leq \alpha$ and $\limsup\limits_{n \to \infty} \bar\beta \left(\Phi_n\right) \leq \alpha$.
\end{itemize}
Typically the terms faster and slower are measured by changing the constant appearing in the detection boundary by an $\pm\eps_n$ term. If upper and lower bounds coincide (asymptotically), we speak of the (asymptotic) detection boundary, cf. \eqref{eq:db_known} below.

In fact we will see that for our specific situation the obtained asymptotic behavior $\Delta_n$, $\sigma_n$ and $\left|I_n\right|\searrow 0$ does not depend on the specific choice of $\alpha \in \left(0,1\right)$. Consequently, we determine conditions for $H_0$ and $H_1^n$ to be either \textit{consistently distinguishable} or \textit{asymptotically indistinguishable} in the spirit of Ingster (cf. \citep{i93}).

\paragraph{Literature review.} Motivated by examples from biology, medicine, economics and other applied sciences, methods for detecting changes or bumps in a sequence of homogeneous observations have been studied extensively in statistics and related fields. We refer to Cs\"orgo \& Horv\'{a}th \citep{ch97}, Siegmund \citep{s13}, Carlstein et al. \citep{cms94} or Frick et al. \citep{fms14} for a survey. Estimating a piecewise constant function in this setting consists on the one hand in a model selection problem (namely estimating the number of change points) and on the other hand in a classical estimation problem (estimating the locations and amplitudes). For the former, we refer to Yao \citep{y88}, who proposed a method to estimate the total number of change points in the data by Schwarz' criterion, and to Bai \& Perron \citep{bp98}, who investigate a Wald-type test. For a more general approach to treat the change-point problem from a model selection perspective we refer to Birg\'e \& Massart \citep{bm01}, and for a combined approach with confidence statements and estimation, see Frick, Munk \& Sieling \cite{fms14}. Concerning the latter, Siegmund \& Venkatraman \citep{sv95} used a generalized likelihood ratio to estimate the location of a single change point in the data. The overall estimation problem has e.g. been tackled by Boysen et al. \cite{b09} via $\ell_0$-penalized least-squares and by Harchaoui \& L\'evy-Leduc \citep{hl10} via a TV-penalized least-squares approach. Siegmund, Yakir \& Zhang \citep{syz11} proposed a specialized estimator for the situation of many aligned sequences when a change occurs in a given fraction of those. Recent developments include efficient algorithms for the overall estimation problem based on accelerated dynamic programming and optimal partitioning by Killick, Fearnhead \& Eckley \citep{kfe12} and \cite{fms14}, on penalized or averaged log likelihood ratio statistics by Rivera \& Walther \citep{rw13}, or Bayesian techniques as proposed by Du, Kao \& Kou \citep{dkk15}. We also mention the recent paper by Goldenshluger, Juditski \& Nemirovski \citep{gjn15}, which has been discussed in this context \citep{mw15}. 

The special question of minimax testing in such a setting has also been addressed by several authors. We mention the seminal paper by D\"umbgen \& Spokoiny \citep{ds01} and also D\"umbgen \& Walther \citep{dw08} who studied multiscale testing in a more general setup, and \cite{fms14} for jump detection in change point regression. Furthermore, we refer to Chen \& Walther \citep{cw13} for an optimality consideration of the tests introduced in \citep{rw13}, and to a rigorous minimax approach for detecting sparse jump locations by Jeng, Cai \& Li \citep{jcl10}. Nevertheless, all these papers only address the case of a homogeneous variance, i.e. $\lambda_n \equiv \sigma_0$. There, it is well-known that the detection boundary is determined by the equations
\begin{equation}\label{eq:db_known}
\sqrt{n \left|I_n\right|} \Delta_n  = \sqrt{2}\sigma_0\sqrt{-\log\left(\left|I_n\right|\right)},
\end{equation}
and the terms ``faster'' and ``slower'' in the above definition are expressed by replacing $\sqrt{2}\sigma_0$ by $\sqrt{2}\sigma_0 \pm \eps_n$ with a sequence $\left(\eps_n\right)_{n \in \mathbb N}$ such that $\eps_n \to 0, \eps_n \sqrt{-\log\left(\left|I_n\right|\right)} \to \infty$ as $n \to \infty$ (for details see \citep{fms14}). 

\smallskip

In the heterogeneous model \eqref{eq:model}--\eqref{eq:model_var} and the connected testing problem \eqref{eq:test_prob} mainly two situations have to be distinguished, see also \citep{l05}. 

The first situation emerges when it is not known whether the variance changes (but it might). Then as in \eqref{eq:model_var} we explicitly admit $\sigma_n^2 = 0$ and the variance is a nuisance parameter rather than an informative part of the model for identification of the bump in the mean. Hence, no improvement of the bound \eqref{eq:db_known} for the case of homogeneous variance is to be expected. The estimation of the corresponding parameters has also been considered in the literature. Huang \& Chang \citep{hc93} consider a least-squares type estimator, and Braun, Braun \& M\"uller \citep{bbm00} combine quasilikelihood methods with Schwarz' criterion for the model selection step. We also mention Arlot \& Celisse \citep{ac11} for an approach based on cross-valiation and Boutahar \citep{b12} for a CUSUM-type test in a slightly different setup. Even though, a rigorous minimax testing theory remains elusive and will be postponed to further investigations.

The second situation, which we treat in this paper, is different: We will always assume $\sigma_n^2>0$ (as in the examples in Section 1.1), which potentially provides extra information on the \textit{location} of the bump and detection of a change might be improved. This is \textit{not} the case in the situation discussed above, as the possibility of a change in variance can only complicate the testing procedure, whereas $\sigma_n^2>0$ gives rise to a second source of information (the variance) for signal detection. The central question is: Does the detection boundary improve due to the additional information and if so, how much? 

\smallskip

\paragraph{Relation to inhomogeneous mixture models.} Obviously, the HBR model is related to a Gaussian mixture model
\begin{equation}\label{eq:mixture}
Y_i \sim \left(1-\varepsilon\right) \mathcal N \left(0,\sigma_0^2\right) + \varepsilon \mathcal N \left(\Delta, \sigma^2\right), \qquad i=1,...,n,
\end{equation}
which has been introduced to analyze heterogeneity in a linear model with a different focus, see e.g. \citep{cw14,aw13,cjj11}. For $\varepsilon \sim n^{-\beta}$ ($0 < \beta < 1$) different (asymptotic) regimes occur. If $\beta > 1/2$, the non-null effects in the model are sparse, which leads to a different behavior as if they are dense ($\beta \leq 1/2$). Although it is not possible to relate \eqref{eq:mixture} with the HBR model in general, it is insightful to highlight some distinctions and commonalities in certain scenarios (for the homogeneous case see the discussion part of \citep{fms14}). A main difference to our model is that the non-null effects do not have to be clustered on an interval $I_n$. It is exactly this clustering, which provides the additional amount of information due to the variance heterogeneity in the HBR model. A further difference is that the definition of an i.i.d. mixture as in \eqref{eq:mixture} intrinsically relates variance and expectation of the $Y_i$. 

\subsection{Organization of the paper} 

The paper is organized as follows: In the following Section \ref{sec:overview} we present our findings of the detection boundary. The methodology and the corresponding general results are stated in Section \ref{sec:method}. Afterwards we derive lower bounds in Section \ref{sec:lower_bounds} and upper bounds in Section \ref{sec:upper_bounds}. There we also discuss upper bounds for the case that $\Delta_n$ is unknown and provide a likelihood ratio test which adapts to this situation. In Section \ref{sec:simulations} we present some simulations and compare the finite sample power of the non-adaptive test and the adaptive test. Finally we will discuss some open questions in Section \ref{sec:conclusions}. To ease the presentation all proofs will be postponed to the Appendix.

\section{Overview of results}\label{sec:overview}

Throughout the following we need notations for asymptotic inequalities. For two sequences $\left(a_n\right)_{n \in \mathbb N}$ and $\left(b_n\right)_{n \in \mathbb N}$, we say that $a_n$ is asymptotically less or equal to $b_n$ and write $a_n \precsim b_n$ if there exists $N \in \mathbb N$ such that $a_n \leq b_n$ for all $n \geq N$. Similarly we define $a_n \succsim b_n$. If $a_n / b_n \to c$ as $n \to \infty$ for some $c \in \mathbb R \setminus \left\{0\right\}$ we write $a_n \sim b_n$. If $c = 1$ we write $a_n \asymp b_n$. We use the terminology to say that $a_n$ and $b_n$ have the same asymptotic rate (but probably different constant) if $a_n \sim b_n$, and that they have the same rate \textbf{and} constant if $a_n \asymp b_n$. Consequently, the relation in \eqref{eq:db_known} determining the detection boundary in the homogeneous case becomes
\[
\sqrt{n \left|I_n\right|} \Delta_n  \asymp \sqrt{2}\sigma_0\sqrt{-\log\left(\left|I_n\right|\right)}.
\]

The notations $\sim$ and $\asymp$ coincide with standard notations. The definitions of $\precsim$ and $\succsim$ can be seen as extensions of the classical minimax notations by Ingster \citep{i93} or D\"umbgen \& Spokoiny \citep{ds01}, where the null hypothesis is tested against the complement of a ball around $0$ with varying radius $\rho = \rho\left(n\right)$. The results are typically presented by stating that if $\rho\left(n\right)$ tends to $0$ faster than the boundary function $\rho^* \left(n\right)$, or if their ratio converges to some constant $<1$, then the null and the alternative hypotheses are indistinguishable. 

In our setting, the asymptotic conditions are defined in terms of sums rather than ratios. Thus we introduced $\precsim$ and $\succsim$ to have similar notations.

\smallskip

In the HBR model, our signal has a change in mean determined by $\Delta_n$ and also a change in its variance, which will be described by the parameter
\[
\kappa_n := \frac{\sigma_{n}}{\sigma_0}>0.
\]
In fact, we will see that the detection boundary is effectively determined by the ratio of $\kappa_n^2$ and $\Delta_n$, which leads to three different regimes:
\begin{itemize}
\item[{\bf DMR}] \textbf{Dominant mean regime.} If $\kappa_n^2$ vanishes faster than $\Delta_n$, the mean will asymptotically dominate the testing problem. In this case, the additional information will asymptotically vanish too fast, so that we expect the same undetectable signals as for the homogeneous case. The dominant mean regime consists of all cases in which $\kappa_n^2 / \Delta_n \to 0$, $n \to \infty$. 
\item[{\bf ER}] \textbf{Equilibrium regime.} If $\kappa_n^2$ has a similar asymptotic behavior as $\Delta_n$, we expect a gain from the additional information. The equilibrium regime consists of all cases in which $\kappa_n \to 0$, $\Delta_n \to 0$ and $c := \lim_{n \to \infty} \kappa_n^2 / (\Delta_n/\sigma_0) = \lim_{n \to \infty} \sigma_n^2/(\Delta_n \sigma_0)$ satisfies $0 < c < \infty$. 
\item[{\bf DVR}] \textbf{Dominant variance regime.} If $\Delta_n$ vanishes faster than $\kappa_n^2$, the variance will asymptotically dominate the testing problem. In this case, we expect the same detection boundary as for the case of testing for a jump in variance only. The dominant variance regime consists of all cases in which $\kappa_n \to 0$ and $\kappa_n^2 / \Delta_n \to \infty$, $n \to \infty$.
\end{itemize}

\bigskip

With this notation, we can collect the most important results of this paper in the following Table \ref{tab:results}. The corresponding detection boundaries in different regimes are also illustrated in Figure \ref{fig:constants}.

\begin{table}[!htb]
\caption{Main results of the paper.}
\begin{tabular}{|c|c|c|c|c|}
\toprule[1pt]
 & rate & \multicolumn{3}{|c|}{constant} \\[0.1cm]
\cmidrule(lr){3-5}
& & lower bound & \multicolumn{2}{c|}{upper bound}\\[0.1cm]
\cmidrule(lr){4-5}
& & & $\Delta_n$ known & $\Delta_n$ unknown \\[0.1cm]
\midrule[.8pt]
\multirow{2}{*}{\vspace*{-.2cm}DMR} & \multirow{2}{*}{\vspace*{-.2cm}$\sqrt{n \left|I_n\right|}\Delta_n \sim \sqrt{-\log\left(\left|I_n\right|\right)}$} & $ \sqrt{2}\sigma_0-\eps_n$ & $\sqrt{2}\sigma_0+\eps_n $ & $ \sqrt{2}\sigma_0+\eps_n$ \\[0.1cm]
& & Thm. \ref{thm:db_signal} & Thm. \ref{thm:LR_homogeneous} & Thm. \ref{thm:LR_adaptive} \\[0.1cm]
\midrule[.5pt]
\multirow{4}{*}{\vspace*{-.8cm}ER} & \multirow{2}{*}{\vspace*{-.2cm}$\sqrt{n \left|I_n\right|}\Delta_n \sim \sqrt{-\log\left(\left|I_n\right|\right)}$} & $ \sqrt{2}\sigma_0 \sqrt{\frac{2}{2+c^2}}-\eps_n$ & $ \sqrt{2}\sigma_0\sqrt{\frac{2}{2+c^2}}+\eps_n $ & $ \sigma_0 \frac{c+\sqrt{2+3c^2}}{1+c^2}+\eps_n $ \\[0.1cm]
& & Thm. \ref{thm:db_equilibrium} & Thm. \ref{thm:LR_equilibrium} & Thm. \ref{thm:LR_adaptive} \\[0.1cm]
\cmidrule[.8pt]{2-5}
 & \multirow{2}{*}{\vspace*{-.2cm}$\sqrt{n \left|I_n\right|}\kappa^2_n \sim \sqrt{-\log\left(\left|I_n\right|\right)}$} & $2 \sqrt{\frac{c^2}{2 + c^2}}-\eps_n$ & $2 \sqrt{\frac{c^2}{2 + c^2}}+\eps_n $ & $ c \frac{c+\sqrt{2+3c^2}}{1+c^2}+\eps_n $ \\[0.1cm]
& & cf. \eqref{eq:lower_bound_equilibrium1_equiv} & analog to \eqref{eq:lower_bound_equilibrium1_equiv} & analog to \eqref{eq:lower_bound_equilibrium1_equiv} \\[0.1cm]
\midrule[.5pt] 
\multirow{2}{*}{\vspace*{-.2cm}DVR} & \multirow{2}{*}{\vspace*{-.2cm}$\sqrt{n \left|I_n\right|}\kappa_n^2 \sim \sqrt{-\log\left(\left|I_n\right|\right)}$} & $2-\eps_n$ & $ 2+\eps_n $ & $ 1+\sqrt{3}+\eps_n $\\[0.1cm]
& & Thm. \ref{thm:db_variance} & Thm. \ref{thm:LR_dominantvar} & Thm. \ref{thm:LR_adaptive} \\[0.1cm]
\bottomrule[1pt]
\end{tabular}
 
\smallskip
 
\parbox{\textwidth}{Here $\left(\eps_n\right)$ is any sequence such that $\eps_n \to 0, \eps_n \sqrt{-\log\left(\left|I_n\right|\right)} \to \infty$. \\

The second column depicts the rates obtained in the different regimes, the columns three to five give the constants in different situations. ER is stated twice, because the results are shown w.r.t. the different rates from DMR and DVR respectively. \\

Exemplary, the lower bound entry in the DMR denotes that signals are no longer detectable if $\sqrt{n \left|I_n\right|}\Delta_n \precsim \left(\sqrt{2 }\sigma_0 - \eps_n\right) \sqrt{-\log\left(\left|I_n\right|\right)}$. Vice versa, the upper bound entry in the DMR means that signals are detectable as soon as $\sqrt{n \left|I_n\right|}\Delta_n \succsim \left(\sqrt{2 }\sigma_0 + \eps_n\right) \sqrt{-\log\left(\left|I_n\right|\right)}$, no matter if $\Delta_n$ is known or needs to be estimated from the data.}

\label{tab:results}
\end{table}

\paragraph{$\Delta_n$ known.} It can readily be seen from Table \ref{tab:results} that the lower and the upper bounds with known $\Delta_n$ coincide up to the $\pm \eps_n$ term in all regimes. This directly implies that the detection boundaries are determined by these constants, which we will describe in more detail now.

\smallskip

{\bf DMR:} A comparison of the lower and upper bounds in Table \ref{tab:results} yields that the detection boundary is given by
\begin{equation}\label{eq:db_signal}
\sqrt{n \left|I_n\right|} \Delta_n  \asymp \sqrt{2}\sigma_0\sqrt{-\log\left(\left|I_n\right|\right)}.
\end{equation}
Hence, the detection boundary in the dominant mean regime coincides with the detection boundary in the homogeneous model (cf. \eqref{eq:db_known}). More precisely, if the additional information $\kappa_n^2$ about a jump in the variance vanishes faster than the jump $\Delta_n$ in mean, we cannot profit from this information asymptotically.

\smallskip

{\bf ER:} It follows similarly that the detection boundary is given by
\begin{equation}\label{eq:db_equilibrium}
\sqrt{n \left|I_n\right|} \Delta_n  \asymp \sqrt{2}\sigma_0 \sqrt{\frac{2}{2+c^2}}\sqrt{-\log\left(\left|I_n\right|\right)},
\end{equation}
where $c = \sigma_0^{-1} \lim_{n \to \infty} \sigma_n^2 / \Delta_n$. The characterization \eqref{eq:db_equilibrium} shows that we do always profit from the additional information in the ER case as $\sqrt{\frac{2}{2+c^2}}<1$ whenever $c>0$. Compared to the homogeneous model or the dominant mean regime the constant improves but the rate stays the same. Note, that the improvement can be quite substantial, e.g. if $c = 1$, which amounts to the same magnitude of $\sigma_n$ and $\Delta_n$, the additional bump in the variance leads to a reduction of $33\%$ sample size to achieve the same power compared to homogeneous bump detection. If $c = 2$, we obtain a reduction of $66\%$.

As $\Delta_n$ and $\kappa_n^2$ are of the same order in the ER, we may replace $\Delta_n$ by $\kappa_n^2$ in \eqref{eq:db_equilibrium} and obtain the equivalent characterization
\begin{equation}\label{eq:db_equilibrium_equiv}
\sqrt{n \left|I_n\right|} \kappa_n^2  \asymp 2 \sqrt{\frac{c^2}{2 + c^2}}\sqrt{-\log\left(\left|I_n\right|\right)}.
\end{equation}
This formulation allows for a comparison with the DVR below and is also depicted in Table \ref{tab:results}, second ER row.

\smallskip

{\bf DVR:} We find from entries three and four in the DVR row of Table \ref{tab:results} that the detection boundary is given by
\begin{equation}\label{eq:db_variance}
\sqrt{n \left|I_n\right|} \kappa_n^2  \asymp 2\sqrt{-\log\left(\left|I_n\right|\right)}.
\end{equation}
If the jump in variance asymptotically dominates the jump in mean, we exactly obtain the detection boundary for a jump in variance only. This is a natural analog to \eqref{eq:db_signal} and coincides with the findings in the literature, cf. \citep[Sect. 2.8.7.]{ch97}. 

Finally note that if $c$ in \eqref{eq:db_equilibrium_equiv} tends to $\infty$, we end up with \eqref{eq:db_variance}. In this spirit a constant $c < \infty$ can be also seen as an improvement over the pure DVR where the rate stays the same but the constant decreases, which is an analog to the comparison of the DMR and the ER, see above. A summary of the obtained detection boundary statements is illustrated in Figure \ref{fig:constants}.

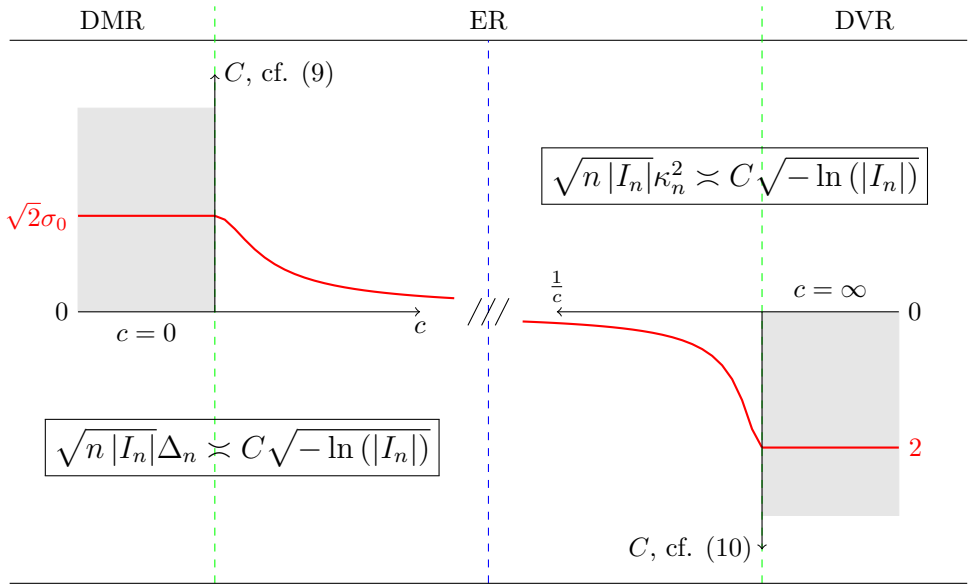
\begin{figure}[!htb]
\begin{tikzpicture}[scale=.9]
\draw (1.5,8.3) node {DMR};
\draw (7,8.3) node {ER};
\draw (12.5,8.3) node {DVR};

\draw (0,8) -- (14,8);

\filldraw[fill=gray!20,very thin,draw=gray!20] (1,4) rectangle (3,7);

\filldraw[fill=gray!20,draw=gray!20,very thin] (11,1) rectangle (13,4);

\draw[green,dashed] (3,0) -- (3,8.5);
\draw[green,dashed] (11,0) -- (11,8.5);
\draw[blue,dashed] (7,0) -- (7,8);

\draw (6.9,3.8) -- (7.1,4.2);
\draw (6.7,3.8) -- (6.9,4.2);
\draw (7.1,3.8) -- (7.3,4.2);

\draw[color=red,thick] (3,5.4142) -- (1,5.4142) node[left] {$\sqrt{2}\sigma_0$};
\draw (1,4) node[left] {$0$};
\draw[color=red,thick,domain=3:6.5] plot[id=C1] function{4+sqrt(2)/(sqrt(1+8*(x-3)*(x-3)/2))};

\draw[color=red,thick] (11,2) -- (13,2) node[right] {$2$};
\draw (13,4) node[right] {$0$};
\draw[color=red,thick,domain=7.5:11] plot[id=C2] function{4-2/(sqrt(8*2*(11-x)*(11-x)+1))};

\draw[->] (1,4) -- (6,4) node[below] {$c$};
\draw[->] (3,4) -- (3,7.5) node[right] {$C$, cf. \eqref{eq:db_equilibrium}};
\draw (2,3.7) node {$c = 0$};

\draw[->] (13,4) -- (8,4) node[above] {$\frac{1}{c}$};
\draw[->] (11,4) -- (11,.5) node[left] {$C$, cf. \eqref{eq:db_equilibrium_equiv}};
\draw (12,4.3) node {$c = \infty$};

\draw (0,0) -- (14,0);

\draw (3.4,2) node {\fbox{\large $\sqrt{n \left|I_n\right|} \Delta_n \asymp C \sqrt{-\ln\left(\left|I_n\right|\right)}$}};
\draw (10.6,6) node {\fbox{\large $\sqrt{n \left|I_n\right|} \kappa_n^2 \asymp C \sqrt{-\ln\left(\left|I_n\right|\right)}$}};

\end{tikzpicture}
\caption{Illustration of the detection boundaries \eqref{eq:db_signal}, \eqref{eq:db_equilibrium}, \eqref{eq:db_equilibrium_equiv} and \eqref{eq:db_variance} in the different regimes. The dashed green lines illustrate the phase transitions between the DMR, ER and ER, DVR respectively, whereas the dashed blue line depicts the phase transition between the rates for DVR and DMR. Note, that on the left-hand side the $x$-axis corresponds to $c$, whereas on the right-hand side $x$- and $y$-axis are inverted and the $x$-axis belongs to $1/c$.}
\label{fig:constants}
\end{figure}

\paragraph{$\Delta_n$ unknown.} In case that $\Delta_n$ is unknown, it has to be estimated from the data $Y$ and the corresponding likelihood ratio test will be introduced in Section 4.4. The change in the considered statistics leads to different upper bounds. It is readily seen from Table \ref{tab:results} that the obtained upper bounds do not coincide with the lower bounds in all regimes. 

\smallskip

{\bf DMR:} It follows from the last entry in the DMR row that \eqref{eq:db_signal} also characterizes the adaptive detection boundary for unknown $\Delta_n$.

\smallskip

{\bf ER:} If the parameter $\Delta_n$ is unknown, it can be seen from the last entry in the ER row that it is unclear if the lower bound remains sharp. We do not have an adaptive lower bound which would allow for an explicit statement here.

\smallskip

{\bf DVR:} In this regime it is again unclear if the lower bound is also sharp in case that $\Delta_n$ is unknown (again we do not have an adaptive one), but we consider it likely that the obtained upper bound $1 + \sqrt{3}$ (cf. the last entry in the DVR row) is sharp.

\smallskip

This loss of a constant can be interpreted as the price for adaptation and is quantified by the ratio $r \left(c\right)$  of \eqref{eq:adaptive_condition_homogeneous} and \eqref{eq:condition_homogeneous}, \eqref{eq:adaptive_condition_equilibrium} and \eqref{eq:condition_equilibrium} and \eqref{eq:adaptive_condition_dominantvar} and \eqref{eq:condition_dominantvar} respectively. The ratios are given by
\[
r\left(c\right) = \begin{cases}
1 & \text{DMR}, c = 0,\\
\frac{\sqrt{2+c^2}\left(c+\sqrt{2+3c^2}\right)}{2\left(1+c^2\right)} & \text{ER}, 0 < c < \infty,\\
\frac{1+\sqrt{3}}{2} & \text{DVR}, c = \infty,
\end{cases}
\]
which are displayed in Figure \ref{fig:adaptation}. Note that $r$ is a continuous function. Remarkably, the price for adaptation is never larger then $\sqrt{2}$, as $r$ attains its maximum at $c = \sqrt{2}$, such that $r\left(\sqrt{2}\right) = \sqrt{2}$. As $c \to \infty$, $r$ tends to $(1+\sqrt{3})/2$, as $c \to 0$, it tends to $1$, meaning that adaptation has no cost at all in this situation. Note, that this is in line with findings for the homogeneous Gaussian model \citep{fms14}. 

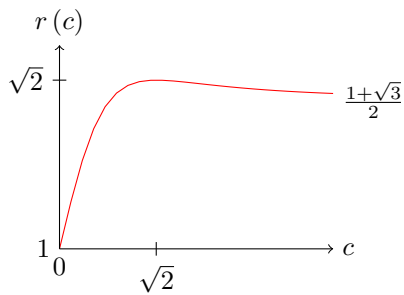
\begin{figure}[!htb]
\centering
\begin{tikzpicture}[scale=.9]
\draw[color=red,domain=0:4] plot[id=r] function{6*((sqrt(2+x*x)*(x+sqrt(2+3*x*x)))/2/(1+x*x)-1)};

\draw[->] (0,0) -- (4,0) node[right] {$c$};
\draw[->] (0,0) -- (0,3) node[above] {$r\left(c\right)$};

\draw (0,0) node[left] {$1$};
\draw (4,2.1962) node[right] {$\frac{1+\sqrt{3}}{2}$};
\draw (.1,2.4853) -- (-.1,2.4853) node[left] {$\sqrt{2}$};
\draw (1.4142,.1) -- (1.4142,-.1) node[below] {$\sqrt{2}$};
\draw (0,0) node[below] {$0$};
\end{tikzpicture}
\caption{The price of adaptation $r$ plotted against the parameter $0 \leq c < \infty$.}
\label{fig:adaptation}
\end{figure}

\bigskip

If we relax the regimes ER and DVR by allowing for $\sigma_n  \to \sigma>0$, things become more complicated. We will prove lower bounds for those cases as well, which include logarithmic terms in $\kappa = \lim_{n \to \infty} \kappa_n$ (cf. Theorems \ref{thm:db_equilibrium} and \ref{thm:db_variance} and values of $C$ in \eqref{eq:lower_bound_equilibrium2} and \eqref{eq:lower_bound_variance2} respectively). Our general methodology (cf. Theorem \ref{thm:LR_gen}) will also allow for upper bounds in these situations (cf. \eqref{eq:er_relaxed_ub} and \eqref{eq:dvr_relaxed_ub}), but it is clear that they cannot be optimal for the following simple reason: The underlying deviation inequality does not include logarithmic terms in $\kappa$, neither do any from the literature (cf. Appendix \ref{app:tails}).

\section{General methodological results}\label{sec:method}

Fixing $I_n$ in \eqref{eq:test_prob}, we obtain the likelihood ratio by straightforward calculations as
\begin{align}
 L_{n}(\Delta_n,& I_n,\kappa_n;Y)=\left(\kappa_n^2+1\right)^{-\frac{n\left|I_n\right|}{2}}  \exp\Biggl(\sum\limits_{i: \frac{i}{n} \in I_n}\frac{\kappa_n^2 Y_i^2 + 2 Y_i \Delta_n - \Delta_n^2}{2\left(1+\kappa_n^2\right)\sigma_0^2}\Biggr) \nonumber\\[0.1cm]
&=\left(\kappa_n^2+1\right)^{-\frac{n\left|I_n\right|}{2}}  \exp\Biggl(\frac{\kappa_n^2}{2\left(1+\kappa_n^2\right)\sigma_0^2} \sum\limits_{i: \frac{i}{n} \in I_n} \left(Y_i + \frac{\Delta_n}{\kappa_n^2} \right)^2 - \frac{n |I_n|\Delta_n^2}{2\sigma_0^2\kappa_n^2}\Biggr).\label{eq:likelihood_ratio}
\end{align}

For the general testing problem \eqref{eq:test_prob} where only the size $\left|I_n\right|$ is known but the location $I_n$ not, the likelihood ratio function is then given by $\sup_{I_n \in \mathcal A_n}L_{n}\left(\Delta_n,I_n,\kappa_n;Y\right)$. 

\subsection{Lower bounds}

Let $\Phi_n$ be any test with asymptotic significance level $\alpha$ under the null-hypothesis $\mu \equiv 0, \lambda_n \equiv \sigma_0$. We will now determine classes $\calS := \left(S_n\right)_{n \in \mathbb N}$ of bump functions, such that $\Phi_n$ is not able to differentiate between the null hypothesis $\mu \equiv 0, \lambda_n \equiv \sigma_0$ and functions $\left(\mu_n,\lambda_n\right) \in S_n$ with type II error $\leq 1-\alpha$. To this end, we construct a sequence $\calS$ such that
\begin{equation}\label{eq:statement}
\bar\alpha \left(\Phi_n\right) \leq \alpha + o \left(1\right) \qquad\Rightarrow\qquad 1-\bar\beta \left(\Phi_n\right)- \alpha \leq o\left(1\right),
\end{equation}
which is equivalent. For such classes $\calS$ we say that $\calS$ is undetectable. 

In the following, the sequence $\calS$ will always be characterized by asymptotic requirements on $\Delta_n, \kappa_n$ and $\left|I_n\right|$. To keep the notation as simple as possible, only the asymptotic requirements will be stated below, meaning that the sequence $\calS$
\[
S_n = \left\{\mu_n = \Delta_n 1_{I_n}, \lambda_n^2 = \sigma_0^2 + \sigma_n^2 1_{I_n} ~\big|~ \Delta_n, \left|I_n\right|, \kappa_n \text{ satisfy the specified requirements}\right\} 
\]
is undetectable.

To prove lower bounds we use the following estimate, which has been employed in \citep{ds01, fms14} as well:
\begin{lemma}\label{lem:estimate_1}
Assume that \eqref{eq:model}--\eqref{eq:model_var} hold true and $\mathcal A_n$ is given by \eqref{eq:defi_A}. If $\Phi_n$ is a test with asymptotic level $\alpha\in \left(0,1\right)$ under the null-hypothesis $\mu \equiv 0$, i.e. $\bar\alpha \left(\Phi_n\right) \leq \alpha + o \left(1\right)$, then
\begin{equation}\label{eq:estimate_1}
1-\bar\beta \left(\Phi_n\right)- \alpha \leq \E{\mu \equiv 0,\lambda\equiv\sigma_0}{\left| \frac{1}{l_n} \sum\limits_{I_n \in \mathcal A_n} L_{n}\left(\Delta_n,I_n,\kappa_n,Y\right)-1\right|} + o \left(1\right),
\end{equation}
as $n \to \infty$.
\end{lemma}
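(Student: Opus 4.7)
The plan is to exploit the standard Bayes reduction: passing from a composite alternative to a single mixture alternative can only make testing easier, so a lower bound on detection for the mixture implies a lower bound on detection for the worst case. Concretely, I would put the uniform prior with mass $1/l_n$ on each location $I_n \in \mathcal{A}_n$ and introduce the averaged likelihood ratio
\[
\bar L_n(Y) := \frac{1}{l_n} \sum_{I_n \in \mathcal A_n} L_n(\Delta_n,I_n,\kappa_n;Y),
\]
which is the Radon--Nikodym derivative, with respect to $\mathbb P_{H_0}$, of the mixture measure $\bar{\mathbb P}_n := \frac{1}{l_n} \sum_{I_n \in \mathcal A_n} \mathbb P_{\mu_n = \Delta_n 1_{I_n},\lambda_n^2 = \sigma_0^2 + \sigma_n^2 1_{I_n}}$.

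The first step is to observe that, since the supremum of non-negative quantities dominates their average,
\[
\bar\beta(\Phi_n) = \sup_{I_n \in \mathcal A_n} \mathbb P_{I_n}(\Phi_n = 0) \geq \frac{1}{l_n}\sum_{I_n \in \mathcal A_n}\mathbb P_{I_n}(\Phi_n = 0) = \bar{\mathbb P}_n(\Phi_n = 0) = 1 - \mathbb E_{H_0}[\Phi_n \bar L_n],
\]
where in the last equality I change measure via $\bar L_n$. Rearranging gives $1 - \bar\beta(\Phi_n) \leq \mathbb E_{H_0}[\Phi_n \bar L_n]$.

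Next I would split this expectation around $1$, writing
\[
\mathbb E_{H_0}[\Phi_n \bar L_n] - \alpha = \mathbb E_{H_0}[\Phi_n(\bar L_n - 1)] + \bigl(\bar\alpha(\Phi_n) - \alpha\bigr) \leq \mathbb E_{H_0}\bigl[\Phi_n|\bar L_n - 1|\bigr] + o(1),
\]
using the asymptotic level assumption $\bar\alpha(\Phi_n) \leq \alpha + o(1)$. Since $\Phi_n$ takes values in $\{0,1\}$, the integrand is dominated by $|\bar L_n - 1|$, giving the claimed inequality \eqref{eq:estimate_1}.

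There is no real obstacle here; the whole argument is the standard Bayes/mixture device from minimax testing theory (as used in \citep{ds01,fms14}). The only step requiring any care is the change of measure $\bar{\mathbb P}_n(\Phi_n = 0) = 1 - \mathbb E_{H_0}[\Phi_n \bar L_n]$, which uses Fubini and the fact that $L_n(\Delta_n,I_n,\kappa_n;Y)$ is indeed the likelihood ratio of $\mathbb P_{I_n}$ with respect to $\mathbb P_{H_0}$ as computed in \eqref{eq:likelihood_ratio}; after that the bound follows immediately from $\Phi_n \leq 1$ and the hypothesis on $\bar\alpha(\Phi_n)$.
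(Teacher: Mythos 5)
Your proof is correct and follows essentially the same route as the paper: both arguments replace the supremum (resp. infimum) over locations by the uniform average, change measure via the likelihood ratio to pull the expectation back under $H_0$, invoke $\bar\alpha(\Phi_n)=\E{H_0}{\Phi_n}\leq\alpha+o(1)$, and finish with $|\Phi_n|\leq 1$. Your explicit introduction of the mixture measure $\bar{\mathbb P}_n$ is just a slightly more verbose packaging of the paper's step $\frac{1}{l_n}\sum_{I_n}\E{I_n}{\Phi_n}=\E{H_0}{\bar L_n\Phi_n}$, so there is no substantive difference.
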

Recall that we want to construct $\calS$ such that the right-hand side of \eqref{eq:estimate_1} tends to $0$. This can be achieved (as in \citep[Lemma 6.2]{ds01}) by the weak law of large numbers. By controlling the moments of $L_{n}\left(\Delta_n,I_n,\kappa_n,Y\right)$, this leads to the following theorem characterizing undetectable sets $\calS$:
\begin{theorem}\label{thm:characterization}
Assume the HBR model is valid with known parameters $\Delta_n, \kappa_n$ and $\left|I_n\right|$. The sequence $\calS$ determining the asymptotic behavior of $\Delta_n$, $\kappa_n$ and $\left|I_n\right|$ is undetectable, if $\left|I_n\right|\searrow 0$ and there exists a sequence $\delta_n>0$, 
satisfying $\delta_n < 1/\kappa_n^2$ such that for $n \to \infty$, 
\begin{equation}\label{eq:cond}
\frac{n \left|I_n\right|\Delta_n^2}{2 \sigma_0^2}\frac{\left(1+\delta_n\right)\delta_n}{1-\delta_n\kappa_n^2}  - \delta_n \frac{n\left|I_n\right|}{2} \log\left(1+\kappa_n^2\right) 
 - \frac{n\left|I_n\right|}{2}  \log\left(1- \delta_n\kappa_n^2\right) + \delta_n \log\left(\left|I_n\right|\right)  \to -\infty.
\end{equation}
\end{theorem}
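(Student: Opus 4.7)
By Lemma~\ref{lem:estimate_1} it suffices to show that $M_n := \frac{1}{l_n}\sum_{I\in\mathcal A_n} L_n^{(I)}$, with $L_n^{(I)}:=L_n(\Delta_n,I,\kappa_n;Y)$, satisfies $\E{H_0}{|M_n-1|}\to 0$. Since the intervals in $\mathcal A_n$ form a disjoint partition of $[0,1]$ (up to boundary points), the variables $L_n^{(I)}$ depend on disjoint blocks of $\{Y_i\}$ and are therefore mutually independent under $H_0$, each with mean one. Following the strategy of D\"umbgen--Spokoiny, I would introduce a truncation level $T_n>0$ to be chosen, set $\tilde L_n^{(I)} := L_n^{(I)}\1_{\{L_n^{(I)}\le T_n\}}$ and $\tilde M_n := l_n^{-1}\sum_I \tilde L_n^{(I)}$, and use the triangle inequality together with the bound $|\E{H_0}{\tilde M_n}-1|\le \E{H_0}{L_n^{(I)}\1_{\{L_n^{(I)}>T_n\}}}$ to obtain
\[
\E{H_0}{|M_n-1|} \;\le\; \sqrt{\mathbb V_{H_0}\bigl[\tilde M_n\bigr]} \;+\; 2\,\E{H_0}{L_n^{(I)}\1_{\{L_n^{(I)}>T_n\}}}.
\]
Independence and the deterministic bound $\tilde L_n^{(I)}\le T_n$ give $\mathbb V_{H_0}[\tilde M_n]\le T_n|I_n|$, while Markov applied to $(L_n^{(I)})^{1+\delta_n}$ yields $\E{H_0}{L_n^{(I)}\1_{\{L_n^{(I)}>T_n\}}}\le T_n^{-\delta_n}\,m_n$, where $m_n:=\E{H_0}{(L_n^{(I)})^{1+\delta_n}}$.

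The heart of the argument is the explicit computation of $m_n$. Since the $Y_i$ with $i/n\in I$ are i.i.d.\ $\mathcal N(0,\sigma_0^2)$ under $H_0$, $L_n^{(I)}$ is a product of $n|I_n|$ i.i.d.\ factors, and $m_n$ reduces to the $n|I_n|$-th power of a one-dimensional Gaussian integral of the form $\int \exp(-ay^2/2+by+c)\,dy$. A careful expansion shows that the coefficient of $y^2$ in the exponent equals $-(1-\delta_n\kappa_n^2)/(\sigma_0^2(1+\kappa_n^2))$, so the integral converges precisely when $\delta_n\kappa_n^2<1$---matching the hypothesis on $\delta_n$. Completing the square and collecting terms yields
\[
\log m_n \;=\; \frac{n|I_n|(1+\delta_n)\delta_n\Delta_n^2}{2\sigma_0^2(1-\delta_n\kappa_n^2)} \;-\; \frac{\delta_n n|I_n|}{2}\log(1+\kappa_n^2) \;-\; \frac{n|I_n|}{2}\log(1-\delta_n\kappa_n^2),
\]
which recovers exactly the first three summands of \eqref{eq:cond}.

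It remains to balance $T_n$ so that both $\sqrt{T_n|I_n|}$ and $m_n T_n^{-\delta_n}$ vanish. A short logarithmic calculation (requiring $\log T_n < -\log|I_n|-\omega_n$ and $\delta_n\log T_n > \log m_n + \omega_n'$ for suitable $\omega_n,\omega_n'\to\infty$) shows that such a $T_n$ exists if and only if $\log m_n + \delta_n\log|I_n|\to -\infty$, which, substituting the formula above for $\log m_n$, is precisely condition \eqref{eq:cond}. The main technical hurdle is the moment computation: one must carefully track the quadratic form appearing in the exponent of $(L_n^{(I)})^{1+\delta_n}$ and verify that the integrability condition $\delta_n\kappa_n^2<1$ indeed matches the standing assumption on $\delta_n$. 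Once this Gaussian calculation is in place, the truncation/balancing step is routine, essentially a weak-law argument applied to the independent (and equidistributed) summands indexed by the disjoint intervals of $\mathcal A_n$.
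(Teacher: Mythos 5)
Your argument is correct and shares the paper's crucial ingredient: the explicit moment $m_n=\E{H_0}{L_n^{1+\delta_n}}$, computed via the Laplace transform of a non-central chi-squared with integrability threshold $\delta_n\kappa_n^2<1$ matching the standing hypothesis, and whose logarithm reproduces exactly the first three summands of \eqref{eq:cond}. The wrap-up, however, is genuinely different. The paper verifies the Lindeberg-type condition $\E{H_0}{\1_{\{|L_n-1|\ge\xi l_n\}}|L_n-1|}\to 0$ by Markov applied to $L_n^{1+\delta_n}$, invokes a weak law of large numbers for triangular arrays to get $M_n\to 1$ in probability, and then upgrades to $L^1$ convergence via a Scheff\'e-type argument using nonnegativity and $\E{}{M_n}=1$. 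You instead derive the $L^1$ bound directly: truncate at a level $T_n$, control $\mathbb{V}\bigl[\tilde M_n\bigr]\le T_n|I_n|$ via independence of the disjoint-block likelihood ratios, bound the truncation error by $m_n T_n^{-\delta_n}$ via Markov, and balance $T_n$. This is self-contained (no external WLLN citation needed) and makes the exponential bookkeeping transparent; it is a pleasant alternative presentation. One small caveat: your claimed equivalence (``such a $T_n$ exists if and only if \ldots'') is stronger than needed and not strictly correct without a mild side condition preventing $\delta_n$ from growing too fast; only the ``if'' direction is required, and — exactly as in the paper's own proof, where the leftover term $\delta_n\log\xi$ must stay controlled — it tacitly uses boundedness of $\delta_n$, which is harmless since all applications take $\delta_n\to 0$.
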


\subsection{Upper bounds}

To construct upper bounds we will now consider the likelihood ratio test. Recall that $\left|I_n\right|$ is known, but the true location of the bump is unknown. Motivated by \eqref{eq:likelihood_ratio} it is determined by
\begin{equation}\label{eq:T_statistic}
T_n^{\Delta_n,\kappa_n,\left|I_n\right|,\sigma_0} \left(Y\right) := \sup\limits_{I_n \in \mathcal A_n} \frac{1}{\sigma_0^2} \sum\limits_{i: \frac{i}{n} \in I_n} \left(Y_i + \frac{\Delta_n}{\kappa_n^2} \right)^2
\end{equation}
as test statistic. For simplicity of presentation, we will just write $T_n$ for the statistic in \eqref{eq:T_statistic} in the following and drop the dependence on the parameters.

Furthermore let
\begin{equation}\label{eq:LR_test}
\Phi_n\left(Y\right) := \begin{cases} 1 & \text{if } T_n \left(Y\right) > c_{\alpha,n}^*, \\[0.1cm] 0 & \text{else} \end{cases}
\end{equation}
be the corresponding test where $c_{\alpha,n}^* \in \mathbb R$ is determined by the test level. 

In the following we will be able to analyze the likelihood ratio test \eqref{eq:LR_test} in DMR, ER and DVR with the help of Lemma \ref{lem:tails}. For the relaxed situations where $\kappa_n \not\to 0$ we will prove lower bounds including logarithmic terms in Theorems \ref{thm:db_equilibrium} and \ref{thm:db_variance}. Upper bounds including logarithmic terms cannot be obtained by the deviation inequalities in Lemma \ref{lem:tails}, and we furthermore found no deviation inequality including logarithmic terms in the literature. Even worse we are not in position to prove such an inequality here, and thus we will not be able to provide upper bounds which coincide with the lower ones in the relaxed regimes.

\smallskip

Now we are able to present the main theorem of this Section:
\begin{theorem}\label{thm:LR_gen}
Assume the HBR model with $\left|I_n\right|\searrow 0$, $\sigma_n>0$ holds true and let $H_0, H_1^n$ be as in \eqref{eq:test_prob}. Let $\alpha \in \left(0,1\right)$ be a given significance level and
\begin{equation}\label{eq:threshold}
c_{\alpha,n}^*=n \left|I_n\right| + \frac{n \left|I_n\right|\Delta_n^2}{\sigma_0^2 \kappa_n^4} -2 \log\left(\alpha \left|I_n\right|\right) + 2 \sqrt{n \left|I_n\right|\left(1+2\frac{\Delta_n^2}{\sigma_0^2\kappa_n^4}\right)\log\left(\frac{1}{\alpha\left|I_n\right|}\right)}.
\end{equation}
Assume furthermore that $\Delta_n,\left|I_n\right|$ and $\kappa_n$ satisfy the following condition:
\begin{equation}\label{eq:cond_LR_test}
\begin{aligned}
n\left|I_n\right| &\left(\kappa_n^4 + 2 \frac{\Delta_n^2}{\sigma_0^2} \right) + \frac{\kappa_n^2 \Delta_n^2 n \left|I_n\right|}{\sigma_0^2}  \\[0.1cm]
\ge &\quad 2 \kappa_n^2 \log\left(\frac1{\left|I_n\right|}\right) + 2 \kappa_n^2 \log\left(\frac1\alpha\right) + 2 \sqrt{n\left|I_n\right| \left(\kappa_n^4 + 2 \frac{\Delta_n^2}{\sigma_0^2} \right) \log\left(\frac{1}{\alpha \left|I_n\right|}\right)} \\[0.1cm]
&\quad+2 \left(1+\kappa_n^2\right) \sqrt{n\left|I_n\right|\left(\kappa_n^4 + 2 \left(1+\kappa_n^2\right) \frac{\Delta_n^2}{\sigma_0^2}\right) \log\left(\frac1\alpha\right)}
\end{aligned}
\end{equation}
Then the test \eqref{eq:LR_test} with the statistic given $T_n$ defined in \eqref{eq:T_statistic} and the threshold \eqref{eq:threshold} satisfies
\[
\limsup\limits_{n \to \infty} \bar\alpha\left(\Phi_n\right) \leq \alpha \qquad\mbox{and}\qquad \limsup\limits_{n \to \infty} \bar\beta\left(\Phi_n\right) \leq \alpha.
\]
\end{theorem}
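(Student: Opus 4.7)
The plan is to identify the exact law of the inner sum defining $T_n$ under both hypotheses and then deploy the non-central chi-squared deviation inequality (Lemma \ref{lem:tails}) twice: once for the union-bound control of the type I error, and once for the lower-tail control on the true interval under the alternative. A short calculation shows that, for any fixed $I_n \in \mathcal A_n$, under $H_0$ the variable $\sigma_0^{-2}\sum_{i:\,i/n \in I_n}(Y_i + \Delta_n/\kappa_n^2)^2$ is non-central chi-squared with $d = n|I_n|$ degrees of freedom and non-centrality $\lambda_0 := n|I_n|\Delta_n^2/(\sigma_0^2\kappa_n^4)$, while on the true interval $I_n^{\star}$ under $H_1^n$ the same expression equals $(1+\kappa_n^2)\,\chi^2_{d,\lambda_1}$ with $\lambda_1 := n|I_n|\Delta_n^2(1+\kappa_n^2)/(\sigma_0^2\kappa_n^4)$. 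These two identifications are the only distributional input the rest of the argument will need.

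For the type I bound I would apply a union bound over the $l_n = 1/|I_n|$ candidate intervals together with the upper half of Lemma \ref{lem:tails}, namely $\mathbb P(\chi^2_{d,\lambda} \geq d + \lambda + 2\sqrt{(d+2\lambda)t}+2t) \leq e^{-t}$. Choosing $t = t^{\star} := \log(1/(\alpha|I_n|))$ and substituting $d = n|I_n|$, $\lambda = \lambda_0$ reproduces exactly $c^{\star}_{\alpha,n}$ as given in \eqref{eq:threshold}, and the union bound turns the per-interval $e^{-t^{\star}} = \alpha|I_n|$ into a total of at most $l_n \cdot \alpha|I_n| = \alpha$. For the type II bound I would lower-bound $T_n$ by its value on the true interval $I_n^{\star}$, then apply the lower-tail half of the same lemma, $\mathbb P(\chi^2_{d,\lambda} \leq d+\lambda - 2\sqrt{(d+2\lambda)t}) \leq e^{-t}$, with $t = \log(1/\alpha)$. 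This yields that with $H_1^n$-probability at least $1-\alpha$, $T_n \geq (1+\kappa_n^2)\bigl[d+\lambda_1 - 2\sqrt{(d+2\lambda_1)\log(1/\alpha)}\bigr]$, and it remains to check that this random lower bound exceeds $c^{\star}_{\alpha,n}$, which is precisely where hypothesis \eqref{eq:cond_LR_test} enters.

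The main obstacle will be the purely algebraic reconciliation at the end. Using $(1+\kappa_n^2)^2 - 1 = 2\kappa_n^2 + \kappa_n^4$ one finds $(1+\kappa_n^2)(d+\lambda_1) - (d+\lambda_0) = n|I_n|\kappa_n^2 + n|I_n|\Delta_n^2(2+\kappa_n^2)/(\sigma_0^2\kappa_n^2)$, so multiplying the desired comparison through by $\kappa_n^2$ produces $n|I_n|\kappa_n^4 + n|I_n|\Delta_n^2(2+\kappa_n^2)/\sigma_0^2$ on the left, matching exactly the left-hand side of \eqref{eq:cond_LR_test}. On the right, the two square-root terms then trace back to the two different non-centralities and deviation levels: the term with $\lambda_0$ and $t^{\star}$ comes from the type I threshold, while the term with $\lambda_1$ and $\log(1/\alpha)$ (carrying the extra $(1+\kappa_n^2)$ prefactor from the scaling of the chi-squared under $H_1^n$) comes from the lower-tail estimate; the logarithmic contributions in \eqref{eq:cond_LR_test} absorb the $-2\log(\alpha|I_n|)$ appearing in $c^{\star}_{\alpha,n}$. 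Keeping faithful track of which $(1+\kappa_n^2)$ factor attaches to which deviation term is the only delicate point of the proof; once that bookkeeping is done, condition \eqref{eq:cond_LR_test} is exactly the needed inequality and both error probabilities are asymptotically at most $\alpha$.
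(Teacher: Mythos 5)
Your proposal is correct and takes essentially the same route as the paper: identify $S(A_n)$ as a (scaled) non-central $\chi^2$ under each hypothesis, union-bound the type I error with the upper tail of Lemma \ref{lem:tails} at level $\log(1/(\alpha|I_n|))$ to recover the threshold \eqref{eq:threshold}, lower-bound $T_n$ by its value on the true interval and use the lower tail at level $\log(1/\alpha)$ for the type II error, and verify $y_{n,\alpha}\ge c_{\alpha,n}^*$ by multiplying through by $\kappa_n^2$, which reproduces \eqref{eq:cond_LR_test} term by term. The algebraic bookkeeping you lay out is exactly the paper's implicit computation.
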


This theorem allows us to analyze the upper bounds obtained by the likelihood ratio test in the regimes DMR, ER and DVR.

\begin{remark}\label{rem:location_known}
Suppose for a second that not only the width $\left|I_n\right|$, but also the location $I_n$ of the bump is known. In this case, the alternative becomes simple, and the analyzed likelihood ratio test will be optimal as it is the Neyman-Pearson test. It can readily be seen from the proofs that all $\log\left(\left|I_n\right|\right)$-terms vanish in this situation, whereas the other expressions stay the same. Thus our analysis will also determine the detection boundary in this case.
\end{remark}

\section{Lower bounds for the detection boundary}\label{sec:lower_bounds}

We will now determine lower bounds in the three different regimes by analyzing \eqref{eq:cond}.

\subsection{Dominant mean regime (DMR)}

As mentioned before, we expect the same lower bounds as for the homogeneous situation here. 
\begin{theorem}\label{thm:db_signal}
Assume the HBR model with $\left|I_n\right|\searrow 0$ and let $\left(\eps_n\right)$ be any sequence such that $\eps_n \to 0, \eps_n \sqrt{-\log\left(\left|I_n\right|\right)} \to \infty$.
\begin{enumerate}
\item If $\sigma_{n}^2 / \Delta_n \to 0$ and $\sigma_{n}^2=o \left(\eps_n\right)$ as $n \to \infty$, then the sequence $\calS$ with
\begin{equation}\label{eq:lower_bound_signal}
\sqrt{n \left|I_n\right|} \Delta_n  \precsim \left(\sqrt{2}\sigma_0 - \eps_n\right) \sqrt{-\log\left(\left|I_n\right|\right)}
\end{equation} is undetectable.
\item If $\sigma_{n}^2 / \Delta_n \to 0$ where $\sigma_{n}^2 = \sigma^2\left(1+o\left(\eps_n\right)\right)$ and $1/\Delta_n^2 = o \left(\eps_n\right)$ as $n \to \infty$, then the sequence $\calS$ with \eqref{eq:lower_bound_signal} is undetectable.
\end{enumerate}
\end{theorem}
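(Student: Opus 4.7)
The strategy is to apply Theorem \ref{thm:characterization}: it suffices to exhibit a sequence $\delta_n > 0$ with $\delta_n < 1/\kappa_n^2$ for which the four-term quantity on the left of \eqref{eq:cond} diverges to $-\infty$. The heuristic is that under the DMR assumption $\kappa_n^2/\Delta_n \to 0$, the additional bump in the variance is asymptotically negligible, so the balance between the first (``signal'') term and the last term $\delta_n\log|I_n|$ ought to reproduce the homogeneous constant $\sqrt{2}\sigma_0$ up to the $\eps_n$-penalty.

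I would take $\delta_n = c\,\eps_n$ for a fixed $c \in (0,\sqrt{2}/\sigma_0)$ (e.g.\ $c = 1/(\sqrt{2}\sigma_0)$, which maximizes the forthcoming negative coefficient). Two forces pin down this scale: the multiplier $\delta_n$ must be exactly of order $\eps_n$ so that the $\eps_n$-gap in the hypothesis $\sqrt{n|I_n|}\Delta_n \precsim (\sqrt{2}\sigma_0 - \eps_n)\sqrt{-\log|I_n|}$ is actually exploited; and $\delta_n$ must vanish, since otherwise the first term of \eqref{eq:cond} (of size $\delta_n(1+\delta_n)(-\log|I_n|)$ at leading order) overwhelms the last one. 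Squaring the hypothesis and using $1/(1-\delta_n\kappa_n^2) = 1+o(1)$ (valid in both items since $\delta_n \to 0$ and $\kappa_n$ is at worst bounded), one bounds the first term of \eqref{eq:cond} by $\delta_n\bigl(1 - \sqrt{2}\,\eps_n/\sigma_0 + o(\eps_n)\bigr)(-\log|I_n|)$. Combined with $\delta_n \log|I_n|$ this produces the dominant contribution $-c(\sqrt{2}/\sigma_0 - c)\,\eps_n^2(-\log|I_n|)$, which tends to $-\infty$ by the requirement $\eps_n\sqrt{-\log|I_n|} \to \infty$.

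The main technical obstacle, and the point at which items 1 and 2 must be handled separately, is to show that the two middle terms $-\delta_n\tfrac{n|I_n|}{2}\log(1+\kappa_n^2) - \tfrac{n|I_n|}{2}\log(1-\delta_n\kappa_n^2)$ are of smaller order than $\eps_n^2(-\log|I_n|)$. A Taylor expansion in $\kappa_n^2$ exhibits a leading cancellation of the $\delta_n\kappa_n^2$ contributions and leaves a positive remainder of order $\tfrac{1}{4}n|I_n|\delta_n\kappa_n^4$ in item 1 (where $\kappa_n\to 0$) and of order $\tfrac{1}{2}n|I_n|\delta_n\bigl(\kappa_n^2 - \log(1+\kappa_n^2)\bigr)$ in item 2 (where $\kappa_n$ stays bounded away from $0$). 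In item 1, the smallness assumption $\sigma_n^2 = o(\eps_n)$ yields $\kappa_n^4 = o(\eps_n^2)$, and combined with $n|I_n|\Delta_n^2 \precsim -\log|I_n|$ and the DMR condition $\sigma_n^2/\Delta_n \to 0$ this forces the remainder to $o\bigl(\eps_n^2(-\log|I_n|)\bigr)$. In item 2, $\kappa_n^2 - \log(1+\kappa_n^2)$ is bounded, so the remainder has order $n|I_n|\delta_n$; here the alternate hypothesis $1/\Delta_n^2 = o(\eps_n)$, together with $n|I_n| \precsim -\log|I_n|/\Delta_n^2$, gives $n|I_n| = o\bigl(\eps_n(-\log|I_n|)\bigr)$, hence $n|I_n|\delta_n = o\bigl(\eps_n^2(-\log|I_n|)\bigr)$, as required.
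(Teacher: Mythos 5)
Your overall strategy --- verify \eqref{eq:cond} with $\delta_n = c\,\eps_n$, split the four terms, and isolate the dominant negative $-c(\sqrt{2}/\sigma_0 - c)\eps_n^2(-\log|I_n|)$ contribution --- is substantively the paper's strategy; you carry it out directly in item 1, where the paper instead defers to Theorem~\ref{thm:db_equilibrium} ``with $c = c_n \to 0$''. Your handling of the middle terms in item 2 via $n|I_n|\delta_n = o(\eps_n^2(-\log|I_n|))$ under $1/\Delta_n^2 = o(\eps_n)$ is correct and parallels the paper's proof. There is, however, a gap in item 1.

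You assert that $\tfrac14\, n|I_n|\,\delta_n\kappa_n^4$ is $o(\eps_n^2(-\log|I_n|))$ ``combined with'' $\kappa_n^4 = o(\eps_n^2)$, $n|I_n|\Delta_n^2 \precsim -\log|I_n|$ and $\kappa_n^2/\Delta_n \to 0$, but these three facts do not force that bound. With $\delta_n \asymp \eps_n$ one needs $n|I_n|\kappa_n^4 = o\bigl(\eps_n(-\log|I_n|)\bigr)$, and writing
\[
n|I_n|\kappa_n^4 \;=\; \bigl(n|I_n|\Delta_n^2\bigr)\left(\frac{\kappa_n^2}{\Delta_n}\right)^2 \;\precsim\; 2\sigma_0^2\,(-\log|I_n|)\left(\frac{\kappa_n^2}{\Delta_n}\right)^2,
\]
the requirement becomes $(\kappa_n^2/\Delta_n)^2 = o(\eps_n)$, i.e. a \emph{rate} on the DMR ratio that the stated hypotheses do not provide. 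For instance, with $\kappa_n^2 = 1/\log n$, $\Delta_n = (\log\log n)/\log n$, $\eps_n = 1/\sqrt{\log n}$ and $-\log|I_n| = (\log n)^2$, one has $\kappa_n^2/\Delta_n = 1/\log\log n \to 0$, $\kappa_n^2 = o(\eps_n)$ and $\eps_n\sqrt{-\log|I_n|}\to\infty$, yet $(\kappa_n^2/\Delta_n)^2 = 1/(\log\log n)^2 \gg \eps_n$; then the positive ``variance remainder'' $\asymp (-\log|I_n|)\,\eps_n(\kappa_n^2/\Delta_n)^2$ overwhelms the $\eps_n^2(-\log|I_n|)$ gain from the first and fourth terms, and no $\delta_n > 0$ drives \eqref{eq:cond} to $-\infty$. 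What is actually needed is $\sigma_n^4/\Delta_n^2 = o(\eps_n)$. (The paper's terse reduction of item 1 to the ER case with $c = c_n \to 0$ tacitly relies on the same control: since $\sqrt{2}\sigma_0 - C_n \asymp c_n^2$ for $C_n = \sqrt{2}\sigma_0\sqrt{2/(2+c_n^2)}$, one needs $c_n^2 \ll \eps_n$ for a hypothesis with constant $\sqrt{2}\sigma_0 - \eps_n$ to imply the ER hypothesis with constant $C_n - \eps_n'$. Your write-up reproduces that implicit assumption without flagging or justifying it.)
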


\subsection{Equilibrium regime (ER)}

Now let us consider the case that $ \sigma_{n}^2$ and $\Delta_n$ are asymptotically of the same order. Consequently $\kappa_n^2$ and $\Delta_n$ will be of the same order. In this situation we expect a gain by the additional information coming from $\sigma_n^2>0$. In fact, the following Theorem states that the constant in the detection boundary changes, but the detection rate stays the same:
\begin{subequations}\label{eqs:detection_boundary_equilibrium}
\begin{theorem}\label{thm:db_equilibrium}
Assume the HBR model with $\left|I_n\right|\searrow 0$ and let $\left(\eps_n\right)$ be any sequence such that $\eps_n \to 0, \eps_n \sqrt{-\log\left(\left|I_n\right|\right)} \to \infty$.
\begin{enumerate}
\item Let $\sigma_{n}^2 = c \sigma_0 \Delta_n\left(1 + o\left(\eps_n\right)\right)$, $c>0$ and $\sigma_{n}^2 = o\left(\eps_n\right)$ as $n \to \infty$. Then the sequence $\calS$ with
\begin{equation}\label{eq:lower_bound_equilibrium1}
\sqrt{n \left|I_n\right|} \Delta_n  \precsim \left(C - \eps_n\right) \sqrt{-\log\left(\left|I_n\right|\right)}, \qquad C:= \sqrt{2}\sigma_0 \sqrt{\frac{2}{2+c^2}}
\end{equation}
as $n \to \infty$ is undetectable.
\item If $\sigma_{n}^2 = \sigma^2\left(1+o\left(\eps_n\right)\right)$ and $\Delta_n = \frac{\sigma^2}{c\sigma_0} \left(1+o\left(\eps_n\right)\right)$ as $n \to \infty$, then with $\kappa := \sigma / \sigma_0$ the sequence $\calS$ with
\begin{equation}\label{eq:lower_bound_equilibrium2}
\sqrt{n \left|I_n\right|} \precsim \left(C - \eps_n\right) \sqrt{-\log\left(\left|I_n\right|\right)}, \qquad C:= \frac{1}{\sqrt{\frac{\kappa^2}{2c^2} \left(\kappa^2 + c^2\right) - \frac12\log\left(1+\kappa^2\right)}}
\end{equation}
as $n \to \infty$ is undetectable.
\end{enumerate}
\end{theorem}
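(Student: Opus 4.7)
The plan is to invoke Theorem \ref{thm:characterization}: in each part it suffices to construct a sequence $\delta_n > 0$ with $\delta_n < 1/\kappa_n^2$ such that the left hand side of \eqref{eq:cond}, call it $G_n(\delta_n)$, tends to $-\infty$. In both parts I would take $\delta_n = \rho\,\eps_n$ for a small positive constant $\rho$ fixed at the very end of the argument. Since $\eps_n \to 0$ while $1/\kappa_n^2$ either diverges (part 1, because $\kappa_n^2 \to 0$) or tends to $1/\kappa^2 > 0$ (part 2), the constraint $\delta_n < 1/\kappa_n^2$ is automatic for $n$ large.

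Noting $G_n(0)=0$, a Taylor expansion around $\delta=0$ gives $G_n(\delta) = \delta F_n + \delta^2 Q_n + O(\delta^3\, n|I_n|)$ with
\[
F_n \;=\; n|I_n|\!\left[\frac{\Delta_n^2}{2\sigma_0^2} + \frac{\kappa_n^2 - \log(1+\kappa_n^2)}{2}\right] + \log|I_n|, \qquad Q_n \;=\; O(n|I_n|).
\]
Since the hypothesis forces $n|I_n| = O(-\log|I_n|)$ in each regime, $Q_n = O(-\log|I_n|)$. The heart of the proof is to show that under the assumed scaling $F_n \le -(2\eps_n/C)(-\log|I_n|)\bigl(1+o(1)\bigr)$; granted this, plugging $\delta_n = \rho\eps_n$ in gives
\[
G_n(\rho\eps_n) \;\le\; \eps_n^2(-\log|I_n|)\!\left[-\tfrac{2\rho}{C} + \rho^2\, O(1) + o(1)\right],
\]
which for $\rho$ small enough is asymptotic to $-K\eps_n^2(-\log|I_n|)$ for some $K>0$ and hence tends to $-\infty$ by the assumption $\eps_n\sqrt{-\log|I_n|}\to\infty$.

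It remains to verify the claim on $F_n$ in each part. In part~1, the elementary identity $\kappa_n^2 - \log(1+\kappa_n^2) = \kappa_n^4/2 + O(\kappa_n^6)$ combined with $\kappa_n^4 = c^2\Delta_n^2/\sigma_0^2\,(1+o(\eps_n))$ and $\kappa_n^2 = o(\eps_n)$ produces
\[
F_n \;=\; \frac{(2+c^2)\,n|I_n|\Delta_n^2}{4\sigma_0^2}\bigl(1+o(\eps_n)\bigr) + \log|I_n|.
\]
Inserting $C^2 = 4\sigma_0^2/(2+c^2)$ and the hypothesis $n|I_n|\Delta_n^2 \le (C-\eps_n)^2(-\log|I_n|)$ yields the bound $F_n \le \bigl[(1-\eps_n/C)^2(1+o(\eps_n))-1\bigr](-\log|I_n|) = -(2\eps_n/C)(-\log|I_n|)\bigl(1+o(1)\bigr)$, as required. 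In part~2, $\kappa_n \to \kappa$ and $\Delta_n \to \kappa^2\sigma_0/c$ with $o(\eps_n)$-corrections, so the bracket in $F_n$ converges to $\frac{\kappa^2(\kappa^2+c^2)}{2c^2} - \frac12\log(1+\kappa^2) = 1/C^2$ with rate $o(\eps_n)$; combined with $n|I_n| \le (C-\eps_n)^2(-\log|I_n|)$ this again gives the same bound on $F_n$.

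The main technical obstacle is the bookkeeping of error terms. One must check that the $(1+o(\eps_n))$ corrections in the hypotheses on $\sigma_n^2$ and $\Delta_n$ survive multiplication by $n|I_n| \asymp -\log|I_n|$ and contribute only $o(\eps_n)(-\log|I_n|)$, which is absorbed by the extracted $-(2\eps_n/C)(-\log|I_n|)$. In part~1 this is where the extra assumption $\sigma_n^2 = o(\eps_n)$ (beyond the rate condition $\sigma_n^2 \sim c\sigma_0\Delta_n$) is crucial: it ensures that the neglected Taylor remainder $n|I_n|\kappa_n^6$ is $o(\eps_n(-\log|I_n|))$, and similarly for the $O(\delta_n^3\, n|I_n|)$ remainder in the expansion of $G_n$.
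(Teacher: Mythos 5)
Your overall strategy matches the paper's: both reduce to Theorem~\ref{thm:characterization}, take $\delta_n$ proportional to $\eps_n$, Taylor-expand the left-hand side of \eqref{eq:cond} around $\delta = 0$, and extract a leading term $\delta F_n$ whose sign gives the result. Your identification of $F_n = n|I_n|\bigl[\tfrac{\Delta_n^2}{2\sigma_0^2} + \tfrac12\left(\kappa_n^2 - \log(1+\kappa_n^2)\right)\bigr] + \log|I_n|$ and the bound $F_n \le -(2\eps_n/C)(-\log|I_n|)(1+o(1))$ are both correct, and part~2 goes through as written since there $n|I_n| = O(-\log|I_n|)$ is genuinely forced by the hypothesis.

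There is, however, a real gap in part~1. You claim ``the hypothesis forces $n|I_n| = O(-\log|I_n|)$ in each regime.'' That is false in the ER with vanishing $\sigma_n$: the hypothesis \eqref{eq:lower_bound_equilibrium1} only forces $n|I_n|\Delta_n^2 = O(-\log|I_n|)$, so $n|I_n| \asymp (-\log|I_n|)/\Delta_n^2 \gg -\log|I_n|$ because $\Delta_n \to 0$. As a consequence your crude cubic remainder $O(\delta^3\,n|I_n|)$ does \emph{not} close the argument: with $\delta_n = \rho\eps_n$ and $n|I_n| \asymp (-\log|I_n|)/\Delta_n^2$, the ratio of $\delta_n^3\,n|I_n|$ to the dominant negative term $\eps_n^2(-\log|I_n|)$ is of order $\eps_n/\Delta_n^2 \to \infty$ (since $\Delta_n = o(\eps_n)$). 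The fix is to keep track of the \emph{structure} of the remainder: expanding $(1+\delta)\delta/(1-\delta\kappa_n^2)$ and $-\log(1-\delta\kappa_n^2)$ as geometric/logarithmic series shows the cubic-and-higher remainder is in fact $O\!\left(\delta^3\,n|I_n|\Delta_n^2\kappa_n^2 + \delta^3\,n|I_n|\kappa_n^6\right)$, not merely $O(\delta^3\,n|I_n|)$. Both of these are $o(\eps_n^2(-\log|I_n|))$ under the assumption $\sigma_n^2 = o(\eps_n)$, since $\kappa_n^2 = o(\eps_n)$ and $\Delta_n = o(\eps_n)$. This is exactly what the paper encodes by reducing \eqref{eq:cond} to \eqref{eq:cond_b_equiv}, whose remainder is the sharper $o(\delta_n\eps_n\,n|I_n|\Delta_n^2)$, not $O(\delta_n^3\,n|I_n|)$. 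You gesture at this at the end (``the neglected Taylor remainder $n|I_n|\kappa_n^6$''), which is the right idea, but your displayed expansion contradicts it; as written, the estimate would not compile into a valid proof for part~1.
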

\end{subequations}

In the first case, for $c = 0$, we have $C = \sqrt{2} \sigma_0$, which corresponds to no change in the variance and hence reduces to the homogeneous model. But if $c>0$, we always obtain $C <  \sqrt{2} \sigma_0$, more precisely we improve by the multiplicative factor $\sqrt{\frac{2}{2+c^2}} < 1$.

As $C = \left(\frac{\kappa^2}{2c^2} \left(\kappa^2 + c^2\right) - \frac12\log\left(1+\kappa^2\right)\right)^{-1/2}$ is not a multiple of $\sqrt{2} \sigma_0$, the gain in the second case \eqref{eq:lower_bound_equilibrium2} is not that obvious. But using $\log\left(1+\kappa^2\right) \leq \kappa^2$ with equality if and only if $\kappa = 0$ implies that $C \leq \sqrt{2}\sigma_0$ with equality if and only if $\kappa = 0$. 

Finally, if $\sigma_{n} \to 0$, then by a Taylor expansion it follows that
\[
\frac{\kappa^2}{2c^2} \left(\kappa^2 + c^2\right) - \frac12\log\left(1+\kappa^2\right)  = \Delta_n^2 \frac{2+c^2}{4\sigma_0^2} + \mathcal O \left(\sigma_{n}^6\right), \qquad \sigma_{n} \to 0,
\]
and hence \eqref{eq:lower_bound_equilibrium2} will reduce to \eqref{eq:lower_bound_equilibrium1} in this situation.

\subsection{Dominant variance regime (DVR)}

Now let us consider the case that the jump in the mean $\Delta_n$ vanishes faster than the jump in the variance $\sigma_{n}^2$. In this situation we again expect a further gain by the additional information. Somewhat surprisingly, we will even obtain a gain in the detection rate compared to the ER in \eqref{eqs:detection_boundary_equilibrium}. 
\begin{subequations}\label{eqs:detection_boundary_variance}
\begin{theorem}\label{thm:db_variance}
Assume the HBR model with $\left|I_n\right|\searrow 0$ and let $\left(\eps_n\right)$ be any sequence such that $\eps_n \to 0, \eps_n \sqrt{-\log\left(\left|I_n\right|\right)} \to \infty$.
\begin{enumerate}
\item If $\sigma_0\Delta_n = \sigma_{n}^2\theta_n$ with sequences $\Delta_n, \sigma_{n}, \theta_n \to 0$ as $n \to \infty$ where $\sigma_{n}^2 = o\left(\eps_n\right)$ and $\theta_n^2 = o \left(\eps_n\right)$, then the sequence $\calS$ with
\begin{equation}\label{eq:lower_bound_variance1}
\sqrt{n \left|I_n\right|} \Delta_n  \precsim \left(2\sigma_0 - \eps_n\right) \sqrt{-\log\left(\left|I_n\right|\right)}\theta_n
\end{equation}
is undetectable.
\item If $\sigma_{n}= \sigma\left(1+ o\left(\eps_n\right)\right)$ and $\Delta_n^2 = o \left(\eps_n\right)$ as $n \to \infty$, then the sequence $\calS$ with
\begin{equation}\label{eq:lower_bound_variance2}
\sqrt{n \left|I_n\right|} \precsim \left(C - \eps_n\right) \sqrt{-\log\left(\left|I_n\right|\right)}, \qquad C:= \frac{1}{\sqrt{\frac{\kappa^2}{2} - \frac12\log \left(1+\kappa^2\right)}}
\end{equation}
is undetectable.
\end{enumerate}
\end{theorem}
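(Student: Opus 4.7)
My plan is to derive both parts as consequences of Theorem~\ref{thm:characterization}: it suffices in each case to exhibit a sequence $\delta_n\in(0,1/\kappa_n^2)$ making the left-hand side of~\eqref{eq:cond} diverge to $-\infty$. In both regimes the $\Delta_n^2$-term of~\eqref{eq:cond} will turn out to be of lower order (by a factor of $\theta_n^2$ in part~1 and a factor of $\Delta_n^2=o(\eps_n)$ in part~2), so the balance to analyse is between the pair of logarithmic terms and the $\delta_n\log(\left|I_n\right|)$ term. The key algebraic observation is the cancellation of the leading $\delta_n\kappa_n^2$ contributions in $-\delta_n\log(1+\kappa_n^2)-\log(1-\delta_n\kappa_n^2)$ after Taylor expansion around zero.

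For part~1, where $\kappa_n\to 0$, I expand both logarithms around zero. The first-order cancellation yields
\[
-\tfrac{\delta_n n\left|I_n\right|}{2}\log(1+\kappa_n^2)-\tfrac{n\left|I_n\right|}{2}\log(1-\delta_n\kappa_n^2)=\tfrac{n\left|I_n\right|\kappa_n^4\delta_n(1+\delta_n)}{4}\bigl(1+O(\kappa_n^2)\bigr),
\]
and using $\Delta_n=\sigma_0\kappa_n^2\theta_n$ the first term in~\eqref{eq:cond} is $O(\theta_n^2)$ times this, hence negligible. The hypothesis equivalently reads $\sqrt{n\left|I_n\right|}\kappa_n^2\precsim(2-\tilde\eps_n)\sqrt{-\log(\left|I_n\right|)}$ with $\tilde\eps_n=\eps_n/\sigma_0$. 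Choosing $\delta_n:=\tilde\eps_n/4$ (which lies below $1/\kappa_n^2$ eventually) and substituting this bound collapses the whole expression to
\[
\text{LHS of~\eqref{eq:cond}}\;\le\;-\tfrac{\tilde\eps_n\delta_n}{2}\bigl(-\log(\left|I_n\right|)\bigr)\bigl(1+o(1)\bigr),
\]
which tends to $-\infty$ since $\tilde\eps_n\delta_n\cdot(-\log(\left|I_n\right|))\asymp\eps_n^2\cdot(-\log(\left|I_n\right|))\to\infty$ by the standing growth hypothesis on $\eps_n$.

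For part~2, where $\kappa_n\to\kappa>0$, only $\log(1-\delta_n\kappa_n^2)$ (with $\delta_n\to 0$) admits a useful Taylor expansion, while $\log(1+\kappa_n^2)=\log(1+\kappa^2)(1+o(\eps_n))$ is treated as an essentially constant factor. The analogous combination becomes
\[
-\tfrac{\delta_n n\left|I_n\right|}{2}\log(1+\kappa_n^2)-\tfrac{n\left|I_n\right|}{2}\log(1-\delta_n\kappa_n^2)=\tfrac{n\left|I_n\right|\delta_n}{2}\bigl[\kappa^2-\log(1+\kappa^2)\bigr]\bigl(1+o(1)\bigr)+O\bigl(n\left|I_n\right|\delta_n^2\bigr).
\]
After using $C^2[\kappa^2-\log(1+\kappa^2)]=2$ together with the hypothesis $n\left|I_n\right|\le(C-\eps_n)^2(-\log(\left|I_n\right|))$, the above gives
\[
\text{LHS of~\eqref{eq:cond}}\;\le\;-\tfrac{2\eps_n\delta_n}{C}\bigl(-\log(\left|I_n\right|)\bigr)\bigl(1+o(1)\bigr)+O\bigl(\delta_n^2\cdot(-\log(\left|I_n\right|))\bigr).
\]
The main obstacle is then the calibration of $\delta_n$: one needs simultaneously $\delta_n=o(\eps_n)$, so that the quadratic Taylor remainder $O(\delta_n^2\cdot(-\log(\left|I_n\right|)))$ is absorbed by the main negative term, and $\eps_n\delta_n\cdot(-\log(\left|I_n\right|))\to\infty$, so that this main term diverges. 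The compatible choice $\delta_n:=1/\sqrt{-\log(\left|I_n\right|)}$ reduces both conditions to $\eps_n\sqrt{-\log(\left|I_n\right|)}\to\infty$, which is exactly the standing hypothesis on $\eps_n$. This quantitative input is precisely what makes the constant $C$ in~\eqref{eq:lower_bound_variance2} sharp and is the delicate part of the argument.
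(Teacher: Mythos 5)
Your proof is correct and follows the same framework as the paper's (reduce to Theorem~\ref{thm:characterization}, Taylor-expand, calibrate $\delta_n$), but you calibrate $\delta_n$ differently, and in part~1 your choice is actually the safer one. Writing $\tilde\eps_n := \eps_n/\sigma_0$, after the first-order cancellation in the two log terms and substitution of \eqref{eq:lower_bound_variance1}, the left-hand side of \eqref{eq:cond} is bounded above by $(-\log|I_n|)\,\delta_n\bigl(\delta_n - \tilde\eps_n + o(\tilde\eps_n)\bigr)$. The paper takes $\delta_n := \sigma_0^{-1}\eps_n$, which makes $\delta_n - \tilde\eps_n$ vanish so that the leading $\eps_n^2(-\log|I_n|)$ contributions of the $\delta_n$ and $\delta_n^2$ terms cancel exactly, and the divergence of the remainder is not guaranteed by the stated hypotheses alone (the factor $\tfrac{2}{\sigma_0^2}$ appearing in that computation is in fact $\tfrac{1}{\sigma_0^2}$); your $\delta_n := \tilde\eps_n/4$ keeps the coefficient strictly negative and gives divergence directly from $\eps_n^2(-\log|I_n|)\to\infty$. (The constant you quote, $-\tilde\eps_n\delta_n/2$, should be $-\tfrac{3}{16}\tilde\eps_n^2$, but this is immaterial.) In part~2 you use the identity $C^2\bigl[\kappa^2-\log(1+\kappa^2)\bigr]=2$ exactly as the paper does, and your $\delta_n = 1/\sqrt{-\log|I_n|}$ is a valid alternative to the paper's $\delta_n=\eta\eps_n$ with $\eta^{-1}=C^3\kappa^4/4$. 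Your remark that $\delta_n=o(\eps_n)$ is ``needed'' is slightly too strong --- a sufficiently small constant multiple of $\eps_n$ also works, which is what the paper does --- but your concrete choice does satisfy it under the standing hypothesis on $\eps_n$, so your argument is complete.
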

\end{subequations}

First note that \eqref{eq:lower_bound_variance1} can be equivalently formulated as 
\begin{equation}\label{eq:dbv1_ref}
\sqrt{n \left|I_n\right|} \kappa_n^2  \precsim \left(2 - \eps_n\right) \sqrt{-\log\left(\left|I_n\right|\right)},
\end{equation}
which is also meaningful if $\Delta_n \equiv 0$ and hence determines the detection boundary if only a jump in the variance with homogeneous mean $\mu \equiv 0$ occurs. We want to emphasize that the exponent $2$ of $\kappa_n$ in \eqref{eq:dbv1_ref} seems natural, as testing the variance for a change is equivalent to testing the mean of the transformed quantities $\left(Y_i - \Delta_n 1_{I_n}\left(i/n\right)\right)^2$ (see \citep[Sect. 2.8.7.]{ch97}). Consequently, the detection rate improves compared to the DMR \eqref{eq:lower_bound_signal}, as we assume $\kappa_n^2/\Delta_n \to \infty$. 

Note that we can also rewrite \eqref{eq:lower_bound_equilibrium1} in terms of the $\kappa_n^2$-rate, which gives the equivalent expression
\begin{equation}\label{eq:lower_bound_equilibrium1_equiv}
\sqrt{n \left|I_n\right|} \kappa_n^2  \precsim \left(C - \eps_n\right) \sqrt{-\log\left(\left|I_n\right|\right)}, \qquad C := 2 \sqrt{\frac{c^2}{2 + c^2}}.
\end{equation}
This makes the ER comparable with the DVR. As expected $C \leq 2$, and thus we see a clear improvement in the constant over \eqref{eq:dbv1_ref}. If $c \to \infty$, then $C$ tends to $2$ which coincides with \eqref{eq:dbv1_ref}.

Again, if $\kappa \to 0$ it can be seen by a Taylor expansion that \eqref{eq:lower_bound_variance2} reduces to \eqref{eq:lower_bound_variance1}. Furthermore, \eqref{eq:lower_bound_variance2} can be seen as the natural extension of \eqref{eq:lower_bound_equilibrium2} as $c \to \infty$. 

\subsection{Discussion}
The condition $\sigma_{n}^2 = o \left(\eps_n\right)$ seems to be unavoidable as soon as a Taylor expansion (cf. \eqref{eq:taylor2}) is about to be used.

In several assumptions we require a convergence behavior of the form $1 + o \left(\eps_n\right)$. Note that this cannot readily be replaced by $1 + o \left(1\right)$, because the detection boundary is well-defined only up to a term of order $\eps_n$. Thus if the convergence behavior is slower, at least the constants will need to change depending on the specific convergence behavior.

\section{Upper bounds for the detection boundary}\label{sec:upper_bounds}

After determining lower bounds for the detection boundary, we now aim for proving that those are sharp in the following sense: There exists a level $\alpha$ test which detects with asymptotic type II error $\leq \alpha$ bump functions having asymptotic behavior of the form \eqref{eq:lower_bound_signal}, \eqref{eqs:detection_boundary_equilibrium} and \eqref{eqs:detection_boundary_variance} where $\left(C-\eps_n\right)$ on the right-hand side is replaced by $\left(C+\eps_n\right)$.

\subsection{Dominant mean regime (DMR)}

Let us first consider the case $\sigma_{n}^2 / \Delta_n \to 0$ and $\sigma_{n}^2=o \left(\eps_n\right)$ as $n \to \infty$. The following theorem states that our lower bounds are optimal by exploiting the likelihood ratio test:
\begin{theorem}\label{thm:LR_homogeneous}
Assume the HBR model with $\left|I_n\right|\searrow 0$ and let $H_0, H_1^n$ be defined by \eqref{eq:test_prob}. Suppose $\eps_n>0$ is any sequence such that $\eps_n\sqrt{-\log\left(\left|I_n\right|\right)}\to\infty$ as $n\to\infty$ and $\alpha \in \left[0,1\right]$ is a given significance level.

Furthermore, let 
\begin{enumerate}
\item either $\sigma_{n}^2 / \Delta_n \to 0$ and $\sigma_{n}^2=o \left(\eps_n\right)$ 
\item or $\sigma_{n}^2 / \Delta_n \to 0$ and $\sigma_{n}^2= \sigma^2 \left(1+o \left(\eps_n\right)\right)$ 
\end{enumerate}
as $n \to \infty$. If
\begin{equation}\label{eq:condition_homogeneous}
\sqrt{n \left|I_n\right|} \Delta_n \succsim \left(\sqrt{2}\sigma_0 + \eps_n\right) \sqrt{-\log\left(\left|I_n\right|\right)},
\end{equation}
then the test \eqref{eq:LR_test} with $T_n$ defined in \eqref{eq:T_statistic} and the threshold \eqref{eq:threshold} satisfies
\[
\limsup\limits_{n \to \infty} \bar\alpha\left(\Phi_n\right) \leq \alpha \qquad\mbox{and}\qquad \limsup\limits_{n \to \infty} \bar\beta\left(\Phi_n\right) \leq \alpha.
\]
\end{theorem}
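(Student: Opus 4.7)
The plan is to reduce the statement to the abstract Theorem~\ref{thm:LR_gen}: since the test statistic \eqref{eq:T_statistic} and the threshold \eqref{eq:threshold} are literally the same in both statements, it suffices to verify that the sufficient condition \eqref{eq:cond_LR_test} is eventually satisfied under the DMR hypotheses (i) or (ii) combined with \eqref{eq:condition_homogeneous}. Once \eqref{eq:cond_LR_test} is verified, Theorem~\ref{thm:LR_gen} directly delivers $\limsup_n \bar\alpha(\Phi_n)\leq\alpha$ and $\limsup_n \bar\beta(\Phi_n)\leq\alpha$.

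First I would identify the leading-order terms on each side of \eqref{eq:cond_LR_test}, exploiting $\kappa_n^2/\Delta_n \to 0$. On the LHS, the dominant term is $2n|I_n|\Delta_n^2/\sigma_0^2$: since $\kappa_n^4 = o(\kappa_n^2\Delta_n) = o(\Delta_n^2)$, the term $n|I_n|\kappa_n^4$ is of strictly smaller order, while the nonnegative cross term $n|I_n|\kappa_n^2\Delta_n^2/\sigma_0^2$ only enlarges the LHS ($o$ of the main term in case~(i) with $\kappa_n^2\to 0$, a bounded positive multiple in case~(ii) with $\kappa_n^2\to\kappa^2$). On the RHS, $\kappa_n^4$ inside the square roots is absorbed into $2\Delta_n^2/\sigma_0^2$, and the additive $\log(1/\alpha)$ is absorbed into $\log(1/|I_n|)\to\infty$. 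Thus the dominant RHS contribution reduces to $2\sqrt{2n|I_n|\Delta_n^2/\sigma_0^2}\,\sqrt{-\log|I_n|}\,(1+o(1))$. Matching the leading parts reduces \eqref{eq:cond_LR_test} to $\sqrt{n|I_n|}\,\Delta_n \geq \sqrt{2}\sigma_0\sqrt{-\log|I_n|}\,(1+o(1))$, which is implied by \eqref{eq:condition_homogeneous} with strict slack $\eps_n\sqrt{-\log|I_n|}\to\infty$.

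The remaining work is bookkeeping for the lower-order RHS terms. The last summand $2(1+\kappa_n^2)\sqrt{n|I_n|(\kappa_n^4 + 2(1+\kappa_n^2)\Delta_n^2/\sigma_0^2)\log(1/\alpha)}$ carries no $\log(1/|I_n|)$ factor, hence is $o$ of the main RHS term. The term $2\kappa_n^2\log(1/\alpha)$ is either $o(1)$ (case~(i)) or bounded (case~(ii)). The term $2\kappa_n^2\log(1/|I_n|)$ requires more care: in case~(i), $\kappa_n^2 = o(\eps_n)$ gives $\kappa_n^2\log(1/|I_n|) = o(\eps_n\log(1/|I_n|))$, which is dominated by the $\eps_n$-slack in the main comparison; in case~(ii), the needed margin comes from the LHS cross term $n|I_n|\kappa_n^2\Delta_n^2/\sigma_0^2 \sim \kappa^2 n|I_n|\Delta_n^2/\sigma_0^2$, which under \eqref{eq:condition_homogeneous} eventually exceeds $2\kappa^2\log(1/|I_n|)$ by a factor diverging with $\Delta_n\to\infty$.

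The main obstacle is exactly this case~(ii) pairing: because $\kappa_n^2\not\to 0$, several $\kappa_n^2$-dependent pieces on the RHS are not asymptotically negligible against the purely $\Delta_n^2$-dominated LHS term alone, and one must match each of them against the specific LHS cross term $n|I_n|\kappa_n^2\Delta_n^2/\sigma_0^2$ in order to close the inequality. This requires tracking constants rather than mere orders of magnitude, but introduces no new ideas beyond the leading-order analysis that suffices in case~(i).
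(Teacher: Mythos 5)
Your overall strategy — reduce to Theorem~\ref{thm:LR_gen} by verifying that \eqref{eq:cond_LR_test} holds under the DMR hypotheses together with \eqref{eq:condition_homogeneous} — is the same as the paper's (the paper handles part~1 by invoking the ER upper bound with $c\to 0$ and part~2 by checking \eqref{eq:cond_LR_test} directly; your variant of doing both directly is fine), and your treatment of case~(i) is correct. However, your argument for case~(ii) contains a genuine gap. You claim that the LHS cross term $n\left|I_n\right|\kappa_n^2\Delta_n^2/\sigma_0^2$ exceeds the RHS term $2\kappa_n^2\log(1/\left|I_n\right|)$ ``by a factor diverging with $\Delta_n\to\infty$.'' This is not true: \eqref{eq:condition_homogeneous} only guarantees $n\left|I_n\right|\Delta_n^2/\sigma_0^2 \geq (\sqrt{2}+\eps_n/\sigma_0)^2\,\log(1/\left|I_n\right|)$ eventually, so the ratio $n\left|I_n\right|\Delta_n^2 / \log(1/\left|I_n\right|)$ is only bounded below by a constant approaching $2\sigma_0^2$, not divergent — $\Delta_n\to\infty$ says nothing, since $n\left|I_n\right|$ can shrink proportionally. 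Thus the cross term is only $(1+o(1))\cdot 2\kappa^2\log(1/\left|I_n\right|)$ and does not by itself create slack.

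In fact, if one sets $a := n\left|I_n\right|\Delta_n^2/\sigma_0^2$ and $L := -\log\left|I_n\right|$ and plugs in the boundary value $a = 2L$, the leading terms balance exactly: the LHS is $(\kappa^2+2)\,a = (2\kappa^2+4)L$ while the RHS is $2\kappa^2 L + 2\sqrt{2aL}(1+o(1)) = (2\kappa^2+4)L\,(1+o(1))$. There is no asymptotic margin to absorb the lower-order terms at $a = 2L$, so a term-by-term comparison with slack fails. The inequality does hold, but the margin is precisely the $\eps_n$ contribution: $a \geq (\sqrt{2}+\eps_n/\sigma_0)^2 L$ yields a surplus of order $\eps_n L$ on the LHS (after dividing by $L$, a surplus of order $\eps_n$), which dominates the remaining $\sqrt{\log(1/\alpha)/L}$-type corrections only because $\eps_n\sqrt{L}\to\infty$. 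To close the proof in case~(ii) you must, as the paper does, divide \eqref{eq:cond_LR_test} by $-\log\left|I_n\right|$, substitute $a/L \geq (\sqrt{2}+\eps_n/\sigma_0)^2$, and verify the resulting inequality between constants, explicitly using the slack $\eps_n$ rather than an unavailable order-of-magnitude gap.
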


\subsection{Equilibrium regime (ER)}

\begin{theorem}\label{thm:LR_equilibrium}
Assume the HBR model with $\left|I_n\right|\searrow 0$ and let $H_0, H_1^n$ be defined by \eqref{eq:test_prob}. Suppose $\eps_n>0$ is any sequence such that $\eps_n\sqrt{-\log\left(\left|I_n\right|\right)}\to\infty$ as $n\to\infty$ and $\alpha \in \left[0,1\right]$ is a given significance level.

Furthermore let $\sigma_{n}^2 =c\sigma_0 \Delta_n \left(1+o\left(\eps_n\right)\right)$ with $c>0$ and $\sigma_{n}^2=o \left(\eps_n\right)$ as $n \to \infty$ and 
\begin{equation}\label{eq:condition_equilibrium}
\sqrt{n \left|I_n\right|} \Delta_n  \succsim \left(C + \eps_n\right) \sqrt{-\log\left(\left|I_n\right|\right)}, \qquad C:= \sqrt{2} \sigma_0 \sqrt{\frac{2}{2+c^2}}.
\end{equation}
Then the test \eqref{eq:LR_test} with $T_n$ defined in \eqref{eq:T_statistic} and the threshold \eqref{eq:threshold} satisfies
\[
\limsup\limits_{n \to \infty} \bar\alpha\left(\Phi_n\right) \leq \alpha \qquad\mbox{and}\qquad \limsup\limits_{n \to \infty} \bar\beta\left(\Phi_n\right) \leq \alpha.
\]
\end{theorem}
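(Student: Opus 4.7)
The plan is to deduce the claim directly from Theorem \ref{thm:LR_gen}, which reduces matters to verifying the technical inequality \eqref{eq:cond_LR_test} under the equilibrium regime assumptions. Since $\sigma_n^2 = o(\eps_n)$ forces $\sigma_n\to 0$, and then $\sigma_n^2 = c\sigma_0\Delta_n(1+o(\eps_n))$ with $c>0$ forces $\Delta_n\to 0$ as well, I would first record the key identities $\kappa_n^2 = (c\Delta_n/\sigma_0)(1+o(\eps_n))$ and $\kappa_n^4 + 2\Delta_n^2/\sigma_0^2 = (c^2+2)\Delta_n^2/\sigma_0^2(1+o(\eps_n))$, both immediate consequences of the ER hypothesis.

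Substituting into \eqref{eq:cond_LR_test}, the left-hand side is dominated by $n|I_n|(c^2+2)\Delta_n^2/\sigma_0^2$, since the summand $\kappa_n^2\Delta_n^2 n|I_n|/\sigma_0^2$ carries an additional factor $\Delta_n\to 0$. On the right-hand side, I expect the leading contribution to be $2\sqrt{n|I_n|(c^2+2)\Delta_n^2/\sigma_0^2\,\log(1/(\alpha|I_n|))}$: the two terms $2\kappa_n^2\log(1/|I_n|)$ and $2\kappa_n^2\log(1/\alpha)$ behave like $\Delta_n\cdot(-\log|I_n|)$ and $\Delta_n$ respectively, both negligible against the leading square root once one uses $n|I_n|\Delta_n^2\succsim -\log|I_n|$ from \eqref{eq:condition_equilibrium}; and the final square-root term differs from the leading one only by replacing $\log(1/(\alpha|I_n|))$ with the bounded quantity $\log(1/\alpha)$, so it is of smaller order.

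After dividing through, \eqref{eq:cond_LR_test} becomes, at leading order,
\[
\sqrt{n|I_n|}\,\Delta_n \;\geq\; \frac{2\sigma_0}{\sqrt{c^2+2}}\sqrt{-\log|I_n|}\,(1+o(1)) \;=\; C\sqrt{-\log|I_n|}\,(1+o(1)),
\]
which is precisely the content of \eqref{eq:condition_equilibrium} with an additional slack of order $\eps_n\sqrt{-\log|I_n|}$. Since $\eps_n\sqrt{-\log|I_n|}\to\infty$, this slack will eventually dominate every $o(1)$ correction accumulated above, so Theorem \ref{thm:LR_gen} applies and yields the desired bounds on $\bar\alpha(\Phi_n)$ and $\bar\beta(\Phi_n)$. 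The main obstacle is purely algebraic: the careful book-keeping of how the various $o(\eps_n)$ and $o(1)$ factors propagate through sums and square roots, ensuring that their aggregated effect is strictly smaller than the available $\eps_n\sqrt{-\log|I_n|}$ margin. No additional probabilistic ingredient beyond Theorem \ref{thm:LR_gen} is required.
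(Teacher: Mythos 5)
Your proposal is correct and follows essentially the same route as the paper: both reduce to verifying condition \eqref{eq:cond_LR_test} via Theorem \ref{thm:LR_gen}, substitute $\kappa_n^2=c\Delta_n/\sigma_0\,(1+o(\eps_n))$, insert \eqref{eq:condition_equilibrium}, normalize by $-\log(|I_n|)$, and use $\Delta_n=o(\eps_n)$ together with $\eps_n\sqrt{-\log(|I_n|)}\to\infty$ to absorb the residual terms. The paper carries the algebra all the way to an explicit scalar inequality in $\eps_n$ and $\sqrt{\log\alpha/\log|I_n|}$, whereas you identify the dominant terms and argue negligibility qualitatively, but the underlying calculation is the same.
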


Comparing Theorems \ref{thm:db_equilibrium} and \ref{thm:LR_equilibrium} one may note that our upper bounds so far do not handle the case where $\kappa_n^2$ does not tend to $0$. In fact we can also obtain an upper bound from \eqref{eq:cond_LR_test} for $\sigma_{n}^2 = \sigma^2\left(1+o\left(\eps_n\right)\right)$ and $\Delta_n = \frac{\sigma^2}{c\sigma_0} \left(1+o\left(\eps_n\right)\right)$ as $n \to \infty$, i.e.
\begin{equation}\label{eq:er_relaxed_ub}
\sqrt{n \left|I_n\right|} \succsim \left(C + \eps_n\right) \sqrt{-\log\left(\left|I_n\right|\right)}, \qquad C:= \frac{\sqrt{2 \kappa^2 + \frac{4\kappa^2}{c^2} + \frac{2\kappa^4}{c^2} + 1 + \frac{2}{c^2}} + \sqrt{1 + \frac{2}{c^2}}}{\kappa^2 + \frac{2\kappa^2}{c^2} + \frac{\kappa^4}{c^2}}.
\end{equation}
Obviously, this bound does not coincide with the lower bounds from Theorem \ref{thm:db_equilibrium}. This is due to the fact that our tail estimates for the $\chi^2$ distribution does not include any logarithmic terms, but our lower bounds do. Thus it is impossible to obtain the same bounds using Lemma \ref{lem:tails} with our technique. 

\subsection{Dominant variance regime (DVR)}

\begin{theorem}\label{thm:LR_dominantvar}
Assume the HBR model with $\left|I_n\right|\searrow 0$ and let $H_0, H_1^n$ be defined by \eqref{eq:test_prob}. Suppose $\eps_n>0$ is any sequence such that $\eps_n\sqrt{-\log\left(\left|I_n\right|\right)}\to\infty$ as $n\to\infty$ and $\alpha \in \left[0,1\right]$ is a given significance level.

Furthermore let $\sigma_0\Delta_n = \sigma_{n}^2\theta_n$ with sequences $\Delta_n, \sigma_{n}, \theta_n \to 0$ as $n \to \infty$ where $\sigma_{n}^2=o \left(\eps_n\right)$, $\theta_n = o \left(\eps_n\right)$ and
\begin{equation}\label{eq:condition_dominantvar}
\sqrt{n \left|I_n\right|} \Delta_n  \succsim \left(2\sigma_0 + \eps_n\right) \sqrt{-\log\left(\left|I_n\right|\right)}\theta_n
\end{equation}
Then the test \eqref{eq:LR_test} with $T_n$ defined in \eqref{eq:T_statistic} and the threshold \eqref{eq:threshold} satisfies
\[
\limsup\limits_{n \to \infty} \bar\alpha\left(\Phi_n\right) \leq \alpha \qquad\mbox{and}\qquad \limsup\limits_{n \to \infty} \bar\beta\left(\Phi_n\right) \leq \alpha.
\]
\end{theorem}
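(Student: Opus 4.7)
The strategy is to invoke Theorem~\ref{thm:LR_gen}: once condition \eqref{eq:cond_LR_test} is verified under the DVR assumptions, the bounds on type~I and type~II errors follow directly. The task therefore reduces to an asymptotic comparison between the two sides of \eqref{eq:cond_LR_test}, with the hypothesis \eqref{eq:condition_dominantvar} providing exactly the quantitative input needed.

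First I would exploit the defining DVR relation $\sigma_0 \Delta_n = \sigma_n^2 \theta_n$, which gives $\Delta_n/(\sigma_0 \kappa_n^2) = \theta_n$ and hence $\Delta_n^2/\sigma_0^2 = \kappa_n^4 \theta_n^2$. Since $\theta_n, \kappa_n \to 0$ by assumption, every quantity appearing in \eqref{eq:cond_LR_test} factors cleanly through $\kappa_n^2$: the left-hand side simplifies to $n\lvert I_n\rvert \kappa_n^4(1 + 2\theta_n^2 + \kappa_n^2 \theta_n^2) = n\lvert I_n\rvert \kappa_n^4(1+o(1))$, while each summand on the right-hand side carries an extra factor $\kappa_n^2$. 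After dividing by $\kappa_n^2$ the target inequality becomes
\begin{equation*}
 n\lvert I_n\rvert \kappa_n^2 (1+o(1)) \;\geq\; 2\log(1/\lvert I_n\rvert) + 2\sqrt{n\lvert I_n\rvert \log(1/(\alpha\lvert I_n\rvert))}(1+o(1)) + R_n,
\end{equation*}
where $R_n = O(\kappa_n^2) + O(\sqrt{n\lvert I_n\rvert})$ collects all terms carrying no factor of $\sqrt{-\log\lvert I_n\rvert}$ and is absorbed by the middle term since $\log(1/\lvert I_n\rvert) \to \infty$.

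Writing $L := \sqrt{-\log\lvert I_n\rvert}$ and $\tilde\eps_n := \eps_n/\sigma_0$, substituting $\Delta_n = \sigma_0 \kappa_n^2 \theta_n$ into \eqref{eq:condition_dominantvar} and cancelling the common factor $\sigma_0 \theta_n$ yields the equivalent form $\sqrt{n\lvert I_n\rvert}\kappa_n^2 \succsim (2+\tilde\eps_n) L$. Multiplying by $\sqrt{n\lvert I_n\rvert}$ gives the lower bound $n\lvert I_n\rvert \kappa_n^2 \succsim (2+\tilde\eps_n) L \sqrt{n\lvert I_n\rvert}$, so the ``surplus'' of the LHS over the dominant $2L\sqrt{n\lvert I_n\rvert}$ summand on the RHS is of order $\tilde\eps_n L \sqrt{n\lvert I_n\rvert}$. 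It must absorb the remaining $2L^2$ term; but $\sqrt{n\lvert I_n\rvert}\kappa_n^2 \succsim L$ forces $\sqrt{n\lvert I_n\rvert} \succsim L/\kappa_n^2$, so that the decisive ratio satisfies $\tilde\eps_n L \sqrt{n\lvert I_n\rvert}/L^2 \succsim \tilde\eps_n/\kappa_n^2 \to \infty$ by the assumption $\sigma_n^2 = o(\eps_n)$.

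The main bookkeeping obstacle I expect is tracking how the smallness of $\kappa_n$ forces $\sqrt{n\lvert I_n\rvert}$ to blow up fast enough that the purely logarithmic contribution $2\log(1/\lvert I_n\rvert)$ becomes negligible compared to the $\tilde\eps_n L \sqrt{n\lvert I_n\rvert}$ surplus. This is precisely where the quantitative assumption $\kappa_n^2 = o(\eps_n)$ enters, and it also explains why the same argument, fed into Lemma~\ref{lem:tails}, cannot cover the relaxed regime $\kappa_n \not\to 0$ as already noted at the end of Section~\ref{sec:overview}.
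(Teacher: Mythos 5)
Your plan is correct and follows the paper's proof essentially verbatim: both invoke Theorem~\ref{thm:LR_gen}, substitute the DVR relation $\Delta_n = \sigma_0 \kappa_n^2 \theta_n$ into \eqref{eq:cond_LR_test}, and then check the resulting inequality using \eqref{eq:condition_dominantvar} together with $\kappa_n^2 = o(\eps_n)$, $\theta_n^2 = o(\eps_n)$ and $\eps_n \sqrt{-\log|I_n|} \to \infty$; the paper merely normalizes by $-\log|I_n|$ directly rather than first factoring out $\kappa_n^2$. One small slip worth flagging: after dividing by $\kappa_n^2$ your remainder $R_n$ should read $O(1) + O(\sqrt{n|I_n|})$ (the $2\kappa_n^2\log(1/\alpha)$ term becomes a constant), not $O(\kappa_n^2) + O(\sqrt{n|I_n|})$, but this does not change the absorption argument.
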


Note again that we can also obtain upper bounds in case that $\sigma_{n}= \sigma\left(1+ o\left(\eps_n\right)\right)$ and $\Delta_n^2 = o \left(\eps_n\right)$ as $n \to \infty$, i.e.
\begin{equation}\label{eq:dvr_relaxed_ub}
\sqrt{n \left|I_n\right|} \succsim \left(C + \eps_n\right) \sqrt{-\log\left(\left|I_n\right|\right)}, \qquad C:= \frac{\sqrt{2\kappa^2 + 1}+1}{\kappa^2},
\end{equation}
which again cannot coincide with the lower bounds from Theorem \ref{thm:db_variance} for the same reason as in the equilibrium regime.

\subsection{Adaptation}\label{sec:adaptivity}

In this Section we will discuss an adaptive version of the likelihood ratio test for which $\Delta_n$ does not need to be known. In this case it is natural to replace $\Delta_n$ by its empirical version $\hat \Delta_n := \left(n \left|I_n\right|\right)^{-1} \sum_{i: \frac{i}{n} \in I_{n}} Y_i$. This leads to the marginal likelihood ratio
\begin{align*}
L_{n}\left(I_n,\kappa_n;Y\right) &=\left(\kappa_n^2+1\right)^{-\frac{n\left|I_n\right|}{2}}  \exp\left(\frac{\kappa_n^2}{2\sigma_0^2\left(\kappa_n^2+1\right)}\sum\limits_{i: \frac{i}{n} \in I_n} Y_i^2+\frac{n\left|I_n\right|}{2\sigma_0^2\left(\kappa_n^2+1\right)} \hat \Delta_n^2\right) \\[0.1cm]
&= \left(\kappa_n^2+1\right)^{-\frac{n\left|I_n\right|}{2}}  \exp\left(\frac{\kappa_n^2}{2\sigma_0^2\left(\kappa_n^2+1\right)}\sum\limits_{i: \frac{i}{n} \in I_n} \left(Y_i - \hat \Delta_n\right)^2 + \frac{n\left|I_n\right|}{2 \sigma_0^2} \hat \Delta_n^2\right)
\end{align*}
and to its corresponding test statistic
\begin{equation}\label{eq:T_statistic_adaptive}
T_n^{\kappa_n,\left|I_n\right|,\sigma_0} \left(Y\right) := \sup\limits_{A_n \in \mathcal A_n} 
\left(\frac{\kappa_n^2}{\sigma_0^2\left(\kappa_n^2+1\right)}\sum\limits_{i: \frac{i}{n} \in A_n} \left(Y_i - \hat \Delta_n\right)^2 + \frac{1}{\sigma_0^2n\left|I_n\right|} \left(~\sum_{i: \frac{i}{n} \in A_{n}} Y_i \right)^2\right).
\end{equation}
Again we will drop the dependence on the parameters subsequently and write $T_n^* := T_n^{\kappa_n,\left|I_n\right|,\sigma_0}$ for the statistic in \eqref{eq:T_statistic_adaptive} to ease the presentation. Let
\begin{equation}\label{eq:LR_adaptive_test}
\Phi_n^*\left(Y\right) := \begin{cases} 1 & \text{if } T_n^* \left(Y\right) > c_{\alpha,n}^*, \\[0.1cm] 0 & \text{else} \end{cases}
\end{equation}
be the corresponding adaptive test where $c_{\alpha,n}^* \in \mathbb R$ is determined by the test level $\alpha$. An analysis of the likelihood ratio test \eqref{eq:LR_adaptive_test} with $T_n^*$ as in \eqref{eq:T_statistic_adaptive} can be carried out similarly to the non-adaptive case. The main difference is that we now need Lemma \ref{lem:tails} with $k = 2$. Thus, the following result can be proven:
\begin{theorem}\label{thm:LR_adaptive}
Assume the HBR model with $\left|I_n\right|\searrow 0$ and let $H_0, H_1^n$ be defined by \eqref{eq:test_prob}. Suppose $\eps_n>0$ is any sequence such that $\eps_n\sqrt{-\log\left(\left|I_n\right|\right)}\to\infty$ as $n\to\infty$ and $\alpha \in \left[0,1\right]$ is a given significance level. Define the corresponding threshold by
\begin{equation}\label{eq:threshold_adaptive}
c_{n,\alpha}^* := \frac{\kappa_n^2}{\kappa_n^2 +1} (n\left|I_n\right|-1)+1 +2\sqrt{\left[\frac{\kappa_n^4}{\left(\kappa_n^2 +1\right)^2}\left(n\left|I_n\right|-1\right) + 1\right]\log\left(\frac{1}{\alpha\left|I_n\right|}\right)} + 2\log\left(\frac{1}{\alpha\left|I_n\right|}\right)
\end{equation}
Now suppose that we are in one of the following three situations:
\begin{itemize}
\item[DMR:]
$\kappa_n^2 / \Delta_n \to 0$ and $1 / \Delta_n  = o \left(\eps_n\right)$ as $n \to \infty$ and 
\begin{equation}\label{eq:adaptive_condition_homogeneous}
\sqrt{n \left|I_n\right|} \Delta_n  \succsim \left(\sqrt{2}\sigma_0 + \eps_n\right) \sqrt{-\log\left(\left|I_n\right|\right)}.
\end{equation}
\item[ER:]
$\sigma_{n}^2 =c\sigma_0 \Delta_n \left(1+o\left(\eps_n\right)\right)$ with $c>0$ and $\sigma_{n}^2=o \left(\eps_n\right)$ as $n \to \infty$ and 
\begin{equation}\label{eq:adaptive_condition_equilibrium}
\sqrt{n \left|I_n\right|} \Delta_n  \succsim \left(C + \eps_n\right) \sqrt{-\log\left(\left|I_n\right|\right)}, \qquad C:= \sigma_0 \frac{c+ \sqrt{2+3c^2}}{1+c^2}.
\end{equation}
\item[DVR:]
$\sigma_0\Delta_n = \sigma_{n}^2\theta_n$ with sequences $\Delta_n, \sigma_{n}, \theta_n \to 0$ as $n \to \infty$ where $\sigma_{n}^2=o \left(\eps_n\right)$, $\theta_n = o \left(\eps_n\right)$ and furthermore
\begin{equation}\label{eq:adaptive_condition_dominantvar}
\sqrt{n \left|I_n\right|} \Delta_n  \succsim \left(\left(1+\sqrt{3}\right)\sigma_0 + \eps_n\right) \sqrt{-\log\left(\left|I_n\right|\right)}\theta_n.
\end{equation}
\end{itemize}
In any of these cases the test \eqref{eq:LR_adaptive_test} with the statistic given $T_n^*$ defined in \eqref{eq:T_statistic_adaptive} and the threshold \eqref{eq:threshold_adaptive} satisfies
\[
\limsup\limits_{n \to \infty} \bar\alpha\left(\Phi_n^*\right) \leq \alpha \qquad\mbox{and}\qquad \limsup\limits_{n \to \infty} \bar\beta\left(\Phi_n^*\right) \leq \alpha.
\]
\end{theorem}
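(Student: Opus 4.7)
The plan follows the blueprint of Theorem~\ref{thm:LR_gen}, with the essential new feature that $T_n^*$ is a weighted sum of \emph{two} independent (non-central) chi-squared variates instead of one, so Lemma~\ref{lem:tails} must be invoked with $k=2$. For any fixed $A_n \in \mathcal{A}_n$, the Gaussian orthogonality between $\hat\Delta_n$ and the residuals $Y_i - \hat\Delta_n$ implies that the two quadratic forms in \eqref{eq:T_statistic_adaptive} are independent. Under $H_0$ they are distributed as $\chi^2_{n|I_n|-1}$ and $\chi^2_1$; when $A_n = I_n$ under $H_1$ they become $(1+\kappa_n^2)\chi^2_{n|I_n|-1}$ and $(1+\kappa_n^2)\chi^2_1(\delta^2)$ with non-centrality $\delta^2 = n|I_n|\Delta_n^2 / (\sigma_0^2(1+\kappa_n^2))$. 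Consequently $T_n^*(A_n)$ is a weighted chi-squared sum with weights $(\kappa_n^2/(\kappa_n^2+1),\,1)$ in the null and $(\kappa_n^2,\,1+\kappa_n^2)$ at the true bump.

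The type~I bound follows from the upper tail of Lemma~\ref{lem:tails} at confidence $\alpha|I_n|$, which reproduces exactly the threshold $c_{n,\alpha}^*$ in \eqref{eq:threshold_adaptive}, combined with a union bound over the $l_n = 1/|I_n|$ candidate intervals. The type~II bound comes from the lower tail of the same lemma applied to $T_n^*(I_n)$ under $H_1$: $\bar\beta(\Phi_n^*) \leq \alpha + o(1)$ is guaranteed provided
\begin{multline*}
\frac{\kappa_n^4}{\kappa_n^2+1}(n|I_n|-1) + \kappa_n^2 + \frac{n|I_n|\Delta_n^2}{\sigma_0^2} \ \geq \ 2\log\tfrac{1}{\alpha|I_n|} + 2\sqrt{\Bigl[\tfrac{\kappa_n^4(n|I_n|-1)}{(\kappa_n^2+1)^2}+1\Bigr]\log\tfrac{1}{\alpha|I_n|}} \\
+\ 2\sqrt{\Bigl[\kappa_n^4(n|I_n|-1) + (1+\kappa_n^2)^2(1+2\delta^2)\Bigr]\log\tfrac{1}{\alpha}},
\end{multline*}
which is the adaptive counterpart of \eqref{eq:cond_LR_test}.

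It then remains to verify this sufficient condition in each of the three regimes. In the DMR the dominant terms are $n|I_n|\Delta_n^2/\sigma_0^2$ on the left and $2\log(1/|I_n|)$ on the right, producing \eqref{eq:adaptive_condition_homogeneous}; the hypothesis $1/\Delta_n = o(\eps_n)$ is what makes the deviation cross-terms negligible. In the DVR the dominant left-hand term is $n|I_n|\kappa_n^4$, the dominant square-root term is $2\sqrt{n|I_n|\kappa_n^4 \log(1/|I_n|)}$, and the elementary identity $\sqrt{4+2\sqrt{3}} = 1+\sqrt{3}$ delivers the constant of \eqref{eq:adaptive_condition_dominantvar}. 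The main obstacle is the ER: substituting $\kappa_n^2 = c\Delta_n/\sigma_0\,(1+o(\eps_n))$ and writing $\Lambda := \sqrt{n|I_n|}\,\Delta_n/(\sigma_0\sqrt{-\log|I_n|})$, the inequality reduces (modulo remainders that are genuinely $o(\log(1/|I_n|))$ thanks to $\sigma_n^2 = o(\eps_n)$) to the scalar quadratic
\[
(1+c^2)\Lambda^2 - 2c\Lambda - 2 \ \geq\ 0,
\]
whose positive root $(c+\sqrt{2+3c^2})/(1+c^2)$ is precisely the constant in \eqref{eq:adaptive_condition_equilibrium}. The delicate point is retaining the linear $-2c\Lambda$ cross-term, which in the ER is of full order $\log(1/|I_n|)$ (coming from the first square-root in $c_{n,\alpha}^*$, which in the DMR was merely $o(\log(1/|I_n|))$); this retained term is exactly what inflates the adaptive ER constant beyond its non-adaptive counterpart $\sqrt{2}\sigma_0\sqrt{2/(2+c^2)}$ of Theorem~\ref{thm:LR_equilibrium}.
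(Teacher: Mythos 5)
Your proposal matches the paper's proof in every essential: you identify the null and alternative distributions of $T_n^*$ as weighted sums of two independent chi-squared variables via Cochran's theorem, apply Lemma~\ref{lem:tails} with $k=2$ for both error types, arrive at exactly the sufficient condition of \eqref{ineq_cT} (your displayed inequality is algebraically identical to it), and verify it in the three regimes with the same quadratics in the key constant. Your additional observation that the retained $-2c\Lambda$ cross-term in the ER (negligible in the DMR) is the source of the adaptive constant's inflation over $\sqrt{2}\sigma_0\sqrt{2/(2+c^2)}$ is correct and a nice explanatory remark, but the underlying computation is the same as the paper's.
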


The above theorem states upper bounds which are adaptive to the height $\Delta_n$ of the bump. It is remarkable that the adaptive upper bound coincides with the non-adaptive one (cf. Theorem \ref{thm:LR_homogeneous}) in the dominant mean regime, where in the two other regimes the rates coincide but we lose a constant. 

\section{Simulations}\label{sec:simulations}

In this Section we will examine the finite sample properties of the test discussed in Section \ref{sec:upper_bounds} by means of a simulation study. We therefore implemented the non-adaptive likelihood ratio test $\Phi_n$ (cf. \eqref{eq:LR_test}) with the threshold $c_{\alpha,n}^*$ as in \eqref{eq:threshold} as well as the adaptive likelihood ratio test $\Phi_n^*$ (cf. \eqref{eq:LR_adaptive_test}) with $c_{\alpha,n}^*$ as in \eqref{eq:threshold_adaptive}. For these tests the levels $\bar\alpha\left(\Phi_n\right), \bar\alpha\left(\Phi_n^*\right)$ and powers $1-\bar\beta\left(\Phi_n\right), 1-\bar\beta\left(\Phi_n^*\right)$ have been examined by $10^4$ simulation runs for $\alpha = 0.05$ with different parameters $\sigma_0$, $n$, $\kappa_n$, $\Delta_n$ and $\left|I_n\right|$, respectively.

All derived detection boundaries involve the ratio $n \left|I_n\right| / \log \left(1/\left|I_n\right|\right)$. For the simulations below, we therefore fixed this ratio and then considered three situations, namely small sample size ($\left|I_n\right| = 0.1$, $n = 829$), medium sample size ($\left|I_n\right| = 0.05$, $n = 2157$) and large sample size ($\left|I_n\right| = 0.025$, $n = 5312$). In all three situations we computed the power for $\sigma_0 = 1$, $\kappa_n^2 \in \left\{0.01,0.02,...,1.2\right\}$ and $\Delta_n \in \left\{0.01,0.02,...,0.7\right\}$. The corresponding results are shown in Figure \ref{fig:simulations}. 

\begin{figure}[!htb]
\setlength\fheight{3.5cm} \setlength\fwidth{3.5cm} 
\centering
\subfigure[non-adaptive test power, small sample case]{
\label{subfig:small_nonadaptive}
%
%
\begin{tikzpicture}

\begin{axis}[%
width=\fwidth,
height=\fheight,
axis on top,
scale only axis,
xmin=0.005,
xmax=0.705,
xlabel={$\Delta_n$},
ymin=0.005,
ymax=1.205,
ylabel={$\kappa_n^2$},
colormap={mymap}{[1pt] rgb(0pt)=(1,1,1); rgb(16pt)=(1,1,0); rgb(40pt)=(1,0,0); rgb(63pt)=(0.0416667,0,0)},
colorbar,
point meta min=0,
point meta max=1
]
\addplot [forget plot] graphics [xmin=0.005,xmax=0.705,ymin=0.005,ymax=1.205] {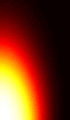};
\end{axis}
\end{tikzpicture}%
}
\subfigure[adaptive test power, small sample case]{
\label{subfig:small_adaptive}
%
%
\begin{tikzpicture}

\begin{axis}[%
width=\fwidth,
height=\fheight,
axis on top,
scale only axis,
xmin=0.005,
xmax=0.705,
xlabel={$\Delta_n$},
ymin=0.005,
ymax=1.205,
ylabel={$\kappa_n^2$},
colormap={mymap}{[1pt] rgb(0pt)=(1,1,1); rgb(16pt)=(1,1,0); rgb(40pt)=(1,0,0); rgb(63pt)=(0.0416667,0,0)},
colorbar,
point meta min=0,
point meta max=1
]
\addplot [forget plot] graphics [xmin=0.005,xmax=0.705,ymin=0.005,ymax=1.205] {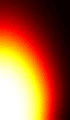};
\end{axis}
\end{tikzpicture}%
}\\
\subfigure[non-adaptive test power, medium sample case]{
\label{subfig:medium_nonadaptive}
%
%
\begin{tikzpicture}

\begin{axis}[%
width=\fwidth,
height=\fheight,
axis on top,
scale only axis,
xmin=0.005,
xmax=0.705,
xlabel={$\Delta_n$},
ymin=0.005,
ymax=1.205,
ylabel={$\kappa_n^2$},
colormap={mymap}{[1pt] rgb(0pt)=(1,1,1); rgb(16pt)=(1,1,0); rgb(40pt)=(1,0,0); rgb(63pt)=(0.0416667,0,0)},
colorbar,
point meta min=0,
point meta max=1
]
\addplot [forget plot] graphics [xmin=0.005,xmax=0.705,ymin=0.005,ymax=1.205] {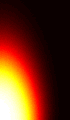};
\end{axis}
\end{tikzpicture}%
}
\subfigure[adaptive test power, medium sample case]{
\label{subfig:medium_adaptive}
%
%
\begin{tikzpicture}

\begin{axis}[%
width=\fwidth,
height=\fheight,
axis on top,
scale only axis,
xmin=0.005,
xmax=0.705,
xlabel={$\Delta_n$},
ymin=0.005,
ymax=1.205,
ylabel={$\kappa_n^2$},
colormap={mymap}{[1pt] rgb(0pt)=(1,1,1); rgb(16pt)=(1,1,0); rgb(40pt)=(1,0,0); rgb(63pt)=(0.0416667,0,0)},
colorbar,
point meta min=0,
point meta max=1
]
\addplot [forget plot] graphics [xmin=0.005,xmax=0.705,ymin=0.005,ymax=1.205] {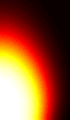};
\end{axis}
\end{tikzpicture}%
}\\
\subfigure[non-adaptive test power, large sample case]{
\label{subfig:large_nonadaptive}
%
%
\begin{tikzpicture}

\begin{axis}[%
width=\fwidth,
height=\fheight,
axis on top,
scale only axis,
xmin=0.005,
xmax=0.705,
xlabel={$\Delta_n$},
ymin=0.005,
ymax=1.205,
ylabel={$\kappa_n^2$},
colormap={mymap}{[1pt] rgb(0pt)=(1,1,1); rgb(16pt)=(1,1,0); rgb(40pt)=(1,0,0); rgb(63pt)=(0.0416667,0,0)},
colorbar,
point meta min=0,
point meta max=1
]
\addplot [forget plot] graphics [xmin=0.005,xmax=0.705,ymin=0.005,ymax=1.205] {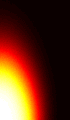};
\end{axis}
\end{tikzpicture}%
}
\subfigure[adaptive test power, large sample case]{
\label{subfig:large_adaptive}
%
%
\begin{tikzpicture}

\begin{axis}[%
width=\fwidth,
height=\fheight,
axis on top,
scale only axis,
xmin=0.005,
xmax=0.705,
xlabel={$\Delta_n$},
ymin=0.005,
ymax=1.205,
ylabel={$\kappa_n^2$},
colormap={mymap}{[1pt] rgb(0pt)=(1,1,1); rgb(16pt)=(1,1,0); rgb(40pt)=(1,0,0); rgb(63pt)=(0.0416667,0,0)},
colorbar,
point meta min=0,
point meta max=1
]
\addplot [forget plot] graphics [xmin=0.005,xmax=0.705,ymin=0.005,ymax=1.205] {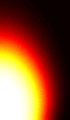};
\end{axis}
\end{tikzpicture}%
}
\label{fig:simulations}
\caption{Power of non-adaptive (known mean $\Delta_n$) and adaptive ($\Delta_n$ unknown) tests plotted for $\Delta_n$ vs. $\kappa_n^2$ in three different situations (small sample size $\left|I_n\right| = 0.1$, $n = 829$, medium sample size $\left|I_n\right| = 0.05$, $n = 2157$, and large sample size $\left|I_n\right| = 0.025$, $n = 5312$), each simulated by $10^4$ Monte Carlo experiments.}
\end{figure}

\paragraph{Power loss caused by adaptivity.} By comparing the constants in Theorems \ref{thm:db_signal}--\ref{thm:db_variance} and Theorem \ref{thm:LR_adaptive} it can be seen (as discussed in Section \ref{sec:overview}) that the adaptive test is (asymptotically) at least as powerful as non-adaptive test if $\Delta_n$ and $\kappa_n^2$ are multiplied by a factor of $\sqrt{2}$. By comparing subplots \ref{subfig:small_adaptive} and \ref{subfig:small_nonadaptive}, \ref{subfig:medium_adaptive} and \ref{subfig:medium_nonadaptive} and \ref{subfig:large_adaptive} and \ref{subfig:large_nonadaptive} respectively, it can be seen roughly that this also holds true in the simulations. Interestingly, the shape of the function $r$ from Figure \ref{fig:adaptation} becomes also visible as a small 'belly' on the lower right in the adaptive large sample situation (subplot \ref{subfig:large_adaptive}), i.e. $\Delta_n$ has initially to increase with $\kappa_n$ to reach the same power. 

\paragraph{Improvement by heterogeneity.} Let us consider an exemplary situation from the medium sample size case, i.e. $n = 2157$ and $\left|I_n\right| = 0.05$. From subplot \ref{subfig:medium_nonadaptive} it can be seen that the empirical power for $\Delta_n = 0.2$ increases dramatically when we alter $\kappa_n$ from $\kappa_n^2 = 0$ to $\kappa_n^2 = 0.5$. For $\kappa_n = 0$ we use the standard likelihood ratio test for comparison, which is based on the statistic
\[
T_n^{\left|I_n\right|}\left(Y\right) = \sup\limits_{I_n \in \mathcal A_n} \frac{1}{\sigma_0^2} \sum\limits_{i: \frac{i}{n} \in I_n} Y_i
\]
and uses the exact quantiles of $\mathcal N \left(0, n \left|I_n\right|\right)$ and the union bound to compute the threshold $c_{\alpha,n}^*$. A comparison of the two situations is shown in Figure \ref{fig:sim_exemplary}. Remarkably, the non-adaptive test at level $5\%$ is able to detect the bump when $\kappa_n^2 = 0.5$, but not when $\kappa_n^2 = 0$ for the depicted data. Note that the empirical power of the test in the heterogeneous situation is $47.1\%$, whereas the one of the test in the homogeneous situation is only $22.5\%$ (note that this is larger than the power depicted in Figure \ref{fig:simulations} for $\kappa_n^2 = 0.01$ due to the used exact quantiles for the homogeneous test).

\begin{figure}[!htb]
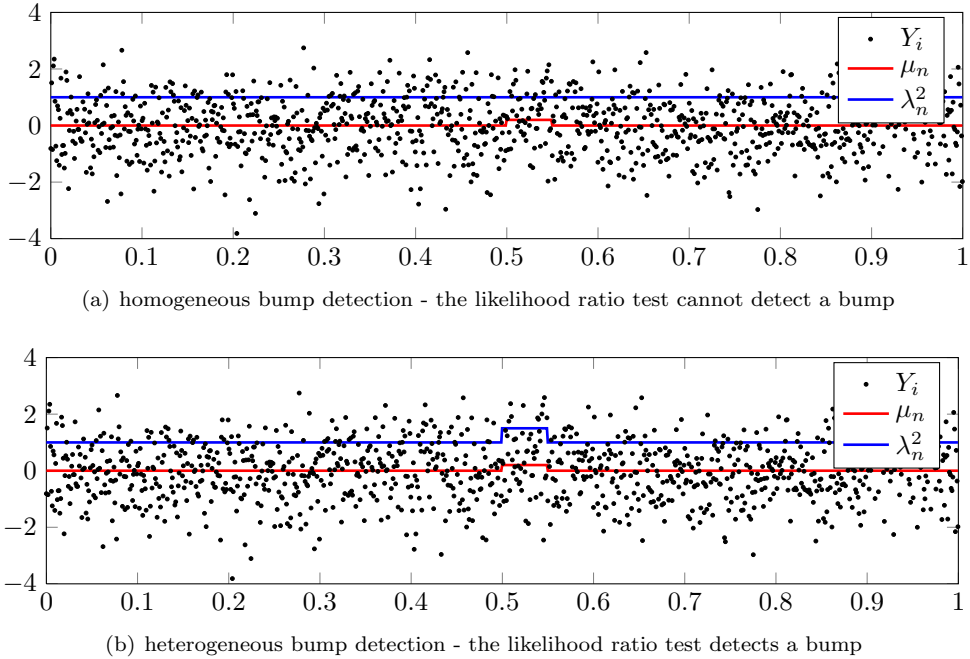

\setlength\fheight{3cm} \setlength\fwidth{12cm}
\centering
\subfigure[homogeneous bump detection - the likelihood ratio test cannot detect a bump]{
\label{subfig:exemplary_homogeneous}
\input{sim_homogeneous.tikz} 
}\\
\subfigure[heterogeneous bump detection - the likelihood ratio test detects a bump]{
\label{subfig:exemplary_heterogeneous}
\input{sim_heterogeneous.tikz} 
}
\label{fig:sim_exemplary}
\caption{Comparison of homogeneous and heterogeneous bump detection. We simulated data from the model \eqref{eq:model} once with $\lambda_n \equiv 1$ (homogeneous) and once with $\lambda_n (x) =  \sqrt{1 + 0.5 \cdot  1_{I_n}(x)}$ (heterogeneous) for the same errors $Z_i$, $1 \leq i \leq n$. Here $n = 2157$, $I_n = \left[0.5, 0.55\right]$ and $\Delta_n = 0.2$. Even though the observations look quite similar in both situations, the non-adaptive test \eqref{eq:LR_test} with level $5\%$ is able to detect a bump in the heterogeneous situation, but not in the homogeneous situation.}
\end{figure}

\section{Conclusions and future work}\label{sec:conclusions}

\paragraph{Summary.} We obtain asymptotic minimax testing rates for the problem of detecting a bump with unknown location $I_n$ in a heterogeneous Gaussian regression model, the HBR model. The lower bound condition \eqref{eq:cond} guides us for a distinction into three possible regimes in the HBR model. It allows to quantify the influence of the presence of an additional information about the change in the variance on the detection rate. Both cases of known and unknown bump height $\Delta_n$ are considered. In three possible regimes of the HBR model the exact constants of the asymptotic testing risk are obtained for the case of known $\Delta_n$. In the case of adaptation to an unknown $\Delta_n$ the obtained rates remain the same as for the case of known $\Delta_n$. The optimal constant coincides in the DMR with the case for known $\Delta_n$, else it is unknown (see Table~\ref{tab:results} for details). The obtained results are based on non-asymptotic deviation inequalities for the type I and type II errors of testing. We would like to stress that the thresholds \eqref{eq:threshold} and \eqref{eq:threshold_adaptive} of both tests are non-asymptotic in the sense that they guarantee the non-asymptotic significance level~ $\alpha$. This allows to apply the proposed tests even for finite samples. We also provide non-asymptotic upper detection bound conditions \eqref{eq:cond_LR_test} and \eqref{ineq_cT}. Lemma \ref{lem:estimate_1} on the lower bound estimate can be easily proven for the case of a non-asymptotic type I error as well. 

\paragraph{Open issues and extensions.} Our results are incomplete concerning the situation if $\kappa_n \to \kappa >0$ in the equilibrium and the dominant variance regime. We applied the same techniques to prove lower bounds in this case, and we conjecture that these lower bounds are optimal, i.e. determine the detection boundary. We were not able to prove coinciding upper bounds due to limitations in the used deviation inequalities. To do so we would need deviation inequalities for $\chi^2$ random variables involving the logarithmic terms from the moments, but such a result is beyond the scope of this paper.

Furthermore, we only investigated adaptation to $\Delta_n$, but it would also be interesting to see what happens if the ``baseline'' variance $\sigma_0$ or / and $\sigma_n$ is unknown. By using the empirical variance on $I_n$ as an estimator, this would lead to a likelihood ratio test with a test statistic involving fourth powers of Gaussian random variables. For the analysis good tail inequalities for such random variables are on demand.

Additionally, one could also ask for adaptation w.r.t. $\left|I_n\right|$ and corresponding lower bounds, but this is far beyond the scope of this paper and requires further research. See, however, \citep{fms14} for the homogeneous case.

Another important open question is concerned with the situation of multiple jumps. In the homogeneous case this has been addressed in \citep{fms14}, and upper bounds with constants $4$ for a bounded and $12$ for an unbounded number of change points have been proven. Even though it seems likely that these constants are optimal, no corresponding lower bounds have been proven so far. In the present HBR model this is an open question as well.

As stressed in the Introduction we currently only deal with the HBR model, i.e. that a jump in mean also enforces a jump in variance ($\kappa_n>0$). But often it is more reasonable to consider the situation that whenever a jump in mean happens, there \textbf{can} also be a jump in variance, but not conversely. This would be modeled by letting $\kappa_n \in \left[0,\infty\right)$. For simplicity one could restrict oneself to $\kappa_n \in \left\{0,\kappa_{1,n}\right\}$. Still it is unclear what the detection boundary should be in this situation. In fact it is not even clear if the additional uncertainty leads to a loss of information. 

We believe that lower bounds can be constructed in the same way (and it is likely that they stay the same), but the calculation of upper bounds seems quite more involved.

\section*{Acknowledgements}

AM wants to thank A. Goldenshluger for helpful discussions. The authors are grateful for several helpful comments of two referees and the associate editor which lead to an improvement in clarity and diction. AM and FW gratefully acknowledge financial support by the German Research Foundation DFG through subproject A07 of CRC 755, and AM also acknowledges support through FOR 916, subprojects B1, B3, and CRC 803 subproject Z02. We also thank A. Bartsch, O. Sch\"utte and C. Steinem (Institute of Organic and Biomolecular Chemistry, University of G\"ottingen) for providing the ion channel data.

\appendix

\section{A chi-squared deviation inequality}\label{app:tails}

For the analysis of the likelihood ratio test and the adaptive version for unknown $\Delta_n$ in Subsection \ref{sec:adaptivity} a specific deviation inequality for the weighted sum of two non-central $\chi^2$ distributed random variables is required. Recall that $X\sim \chi_d^2\left(a^2\right)$ with non-centrality parameter $a^2$ and $d$ degrees of freedom, if 
\[
X=\sum\limits_{j=1} ^d \left(\xi_j+a_j\right)^2  \qquad\text{where}\qquad \xi_j\stackrel{\text{i.i.d.}}{\sim}\mathcal N\left(0,1\right) \text{ and }a^2=\sum\limits_{j=1}^d a_j^2.
\]
In this case $\E{}{X}=d+a^2$ and $\Var X=2\left(d+2a^2\right)$.

In the following we consider the weighted sum of $k$ non-central chi-squared variables $Z = \sum_{i=1}^k b_i X_i$, where $b_i\geq 0$ and $X_i \sim \chi_{d_i}^2\left(a_i^2\right)$ are independent with $d_i \in \mathbb N$, $a_i^2 \geq 0$, $i=1,\dots,k$. Note that
\[
\E{}{Z} = \sum_{i=1}^k b_i (d_i+a_i^2) \qquad \text{and}\qquad \Var{Z} = 2 \sum_{i=1}^ k b_i^2 (d_i+ 2 a_i^2).
\]

\begin{lemma}\label{lem:tails}
Let $Z = \sum_{i= 1}^k b_i X_i$, where $b_i\geq 0$ and $X_i \sim \chi_{d_i}^2\left(a_i^2\right)$ are independent with $d_i \in \mathbb N$, $a_i^2 \geq 0$, $i=1,\dots,k$. Let $\|b\|_\infty =\max_{1 \leq i\leq k} |b_i|$. Then the following deviation bounds hold true for all $x>0$: 
\begin{subequations}\label{eqs:tails}
\begin{align}
\Prob{}{Z\le \E{}{Z} -\sqrt{2\Var{Z}x}}&\leq \exp\left(-x\right), \label{eq:tail_lower} \\[0.1cm]
\Prob{}{Z> \E{}{Z} +\sqrt{2\Var{Z}x}+2\|b\|_\infty x}&\leq \exp\left(-x\right). \label{eq:tail_upper}
\end{align}
\end{subequations}
\end{lemma}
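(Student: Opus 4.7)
The plan is to follow the classical Cram\'er--Chernoff strategy combined with Bernstein-type control of the cumulant generating function (CGF), in the spirit of the Laurent--Massart argument for central $\chi^2$ variables and its weighted non-central extension in \citep{llm12}.

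First I would compute the moment generating function. For $X_i \sim \chi^2_{d_i}(a_i^2)$ the standard identity gives $\mathbb E[\exp(tX_i)] = (1-2t)^{-d_i/2}\exp\!\bigl(a_i^2 t/(1-2t)\bigr)$ for $t<1/2$, so that by independence the centered CGF $\Lambda(t) := \log \mathbb E\bigl[\exp(t(Z-\mathbb E Z))\bigr]$ decomposes, for $0\le t<1/(2\|b\|_\infty)$, into the sum
\[
\Lambda(t) = \sum_{i=1}^k \Bigl[-\tfrac{d_i}{2}\log(1-2tb_i) + \frac{a_i^2 tb_i}{1-2tb_i} - tb_i(d_i+a_i^2)\Bigr].
\]

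Next I would bound $\Lambda(t)$ termwise. The elementary series expansion yields $-\log(1-x)-x \le x^2/\bigl(2(1-x)\bigr)$ for $0\le x<1$, which controls the central contribution $-\tfrac{d_i}{2}\log(1-2tb_i)-td_ib_i$, while a one-line algebraic rearrangement gives the non-central correction exactly as $a_i^2 tb_i/(1-2tb_i) - a_i^2 tb_i = 2a_i^2 b_i^2 t^2/(1-2tb_i)$. Inserting both, using the pointwise bound $(1-2tb_i)^{-1}\le (1-2t\|b\|_\infty)^{-1}$ and recognising that $\sum_i b_i^2(d_i+2a_i^2) = \Var{Z}/2$, I arrive at the Bernstein-form inequality
\[
\Lambda(t) \;\le\; \frac{t^2\,\Var{Z}/2}{1-2\|b\|_\infty t},\qquad 0\le t<\frac{1}{2\|b\|_\infty}.
\]
Applying Chernoff's inequality and the standard Legendre-dual computation for this CGF profile (with variance proxy $\Var{Z}$ and scale $c=2\|b\|_\infty$) then produces \eqref{eq:tail_upper}.

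For the lower tail I take $t=-u$ with $u\ge 0$, so that each summand becomes $d_i\bigl[ub_i-\tfrac12\log(1+2ub_i)\bigr] + a_i^2\cdot 2u^2 b_i^2/(1+2ub_i)$. The elementary inequalities $\log(1+y)\ge y-y^2/2$ and $(1+y)^{-1}\le 1$ for $y\ge 0$ bound the two brackets by $u^2 b_i^2$ and $2u^2 b_i^2$ respectively; summing kills all linear terms and gives the purely sub-Gaussian bound $\Lambda(-u)\le u^2\,\Var{Z}/2$. Optimising Chernoff's bound over $u$ at $u^\star=\sqrt{2x/\Var{Z}}$ yields \eqref{eq:tail_lower}.

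No step is technically hard; the only real care is in the bookkeeping of constants. The main pitfall to avoid is mistracking the non-central correction: one must verify that it contributes exactly the additional factor $2a_i^2$ needed to reconstitute $\Var{Z}/2$ in the numerator of the Bernstein bound, and that the denominator carries the sharp factor $2\|b\|_\infty$ rather than $\|b\|_\infty$ (which is what produces the coefficient $2$ in front of $\|b\|_\infty x$ in \eqref{eq:tail_upper} and not $1$).
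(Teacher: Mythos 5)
The paper does not actually prove this lemma: it states only that it is ``a reformulation of \citep[Lemma 2]{llm12}'' and offers no derivation. Your proof is therefore a genuine addition, and it is correct. The MGF identity, the centered CGF decomposition, the elementary bound $-\log(1-x)-x \leq x^2/\bigl(2(1-x)\bigr)$, the exact algebraic identity for the non-central correction $\tfrac{a_i^2 t b_i}{1-2tb_i}-a_i^2 t b_i = \tfrac{2a_i^2 t^2 b_i^2}{1-2tb_i}$, the reassembly into $\Var{Z}/2 = \sum_i b_i^2(d_i+2a_i^2)$, and the Legendre-dual inversion $s=\sqrt{2vx}+cx \Leftrightarrow h^*(s)=x$ for $h(t)=\tfrac{vt^2}{2(1-ct)}$ with $v=\Var{Z}$, $c=2\|b\|_\infty$ all check out; likewise, for the lower tail, $\log(1+y)\geq y-y^2/2$ and $(1+y)^{-1}\leq1$ kill the linear terms and give the sub-Gaussian bound $\Lambda(-u)\leq u^2\Var{Z}/2$, from which $u^\star=\sqrt{2x/\Var{Z}}$ yields \eqref{eq:tail_lower}. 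This is the standard Cram\'er--Chernoff / Laurent--Massart route and is almost certainly what underlies the cited Lemma~2 of \citep{llm12}; you have simply written it out in full, which is a useful self-contained supplement to the paper's citation.
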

The above Lemma is a reformulation of \citep[Lemma 2]{llm12}.

\bigskip

In the following we will briefly discuss other deviation inequalities for $\chi^2$ distributed random variables \citep{Birge2001,sz13,rd13,Hsu2012,BenTal2009,blm13} and their connections to \eqref{eqs:tails} from the perspective of our purpose. At first we mention an inequality by Birg\'e \citep[Lemma 8.1]{Birge2001}, which deals with $k =1$ and coincides with the above result in that case. Also related to \eqref{eqs:tails} is Prop. 6 in Rohde \& D\"umbgen \citep{rd13}, 
\begin{equation}\label{eq:dr_tail}
\Prob{}{Z> \E{}{Z}+ \sqrt{2 \Var Z x+4\|b\|_\infty^2 x^2} +2 \|b\|_\infty x} \leq \exp\left(-x\right),\qquad x>0.  
\end{equation}
This differs from our bound \eqref{eq:tail_upper} by the additional $4\|b\|_\infty^2 x^2$ term in the square root, revealing \eqref{eq:tail_upper} as strictly sharper. 

\smallskip

Other deviation results were proven for second order forms $f_w\left(s\right) = s^TB^TBs + w^T s, s \in \mathbb R^d$ of Gaussian vectors with $B \in \mathbb R^{d \times d}, w \in \mathbb R^d$. To relate this with our situation, let $d = \sum_{i=1}^k d_i$, choose any $a_{j,i} \in \mathbb R$ such that $\sum_{j=1}^{d_i} a_{j,i}^2 = a_i^2$ for all $1 \leq i \leq k$ and set
\begin{align*}
B &:= \text{diag}\left(\sqrt{b_1}\cdot \1_{d_1},\sqrt{b_2}\cdot \1_{d_2},...,\sqrt{b_k}\cdot \1_{d_k}\right), \\
w &:= \left(b_1a_{1,1},b_1a_{2,1},...,b_1a_{d_1,1},b_2a_{1,2},...,b_2a_{d_2,2},b_3a_{1,3},...\right)^T.
\end{align*}
Here $\1_{m} = \left(1,...,1\right) \in \mathbb R^m$. Let $\Tr\left(A\right)$ be the trace of a matrix $A$. Then $\Tr\left(B^TB\right) = \Tr\left(B^2\right) = \sum_{i=1}^k b_i d_i$, $\Tr\left(B^4\right) = \sum_{i=1}^k b_i^2 d_i$, $\|B^2\|_\infty = \left\Vert b \right\Vert_{\infty}$ and $w^Tw = \sum_{i=1}^kb_i^2a_i^2$. This yields
\begin{equation}\label{eq:rel_Z}
f_w\left(\zeta\right) = Z - \sum\limits_{i=1}^k b_ia_i^2 = Z - \E{}{Z} + \Tr\left(B^2\right), \qquad \zeta\sim \mathcal N\left(0,I_d\right)
\end{equation}
with $Z$ as in Lemma \ref{lem:tails}. 

A deviation inequality for the case $w \neq 0$ can be found in the monograph by Ben-Tal et al. \citep[Prop.~4.5.10]{BenTal2009}, which provides in our setting the bound
\begin{equation}\label{eq:ben_tal_tail}
\Prob{}{f_w\left(\zeta\right)>\Tr(B^2)+2\sqrt{\|B^2\|^2x^2+2(\Tr(B^4)+w^Tw)x}+2\|B^2\|x}\leq\exp\left(-x\right)
\end{equation}
for all $x>0$, where $\|B^2\|$ denotes the spectral norm of $B^2$ given by the square root of the largest eigenvalue. Taking \eqref{eq:rel_Z} into account, this yields
\[
\Prob{}{Z > \E{}{Z} + 2\sqrt{2\sum\limits_{i=1}^k b_i^2 \left(d_i + a_i^2\right) x + \left\Vert b \right\Vert_\infty^2x^2}+2\left\Vert b \right\Vert_\infty x}\leq\exp\left(-t\right), \qquad t >0
\]
for $Z$ as in Lemma \ref{lem:tails}. As $8\sum_{i=1}^k b_i^2 \left(d_i + a_i^2\right) > 2 \Var{Z} $, this inequality is strictly weaker than the D\"umbgen-Rohde estimate \eqref{eq:dr_tail} in the present setting.

For the case $w = 0$ (corresponding to centered $\chi^2$ random variables or $a_i = 0$ for $1 \leq i \leq k$ in our notation), Lemma \ref{lem:tails} reduces to the deviation inequality in Hsu et al. \citep{Hsu2012}:
\begin{equation}\label{eq:hsu_tail}
\Prob{}{f_0\left(\zeta\right)> \Tr (B^2) + 2 \sqrt{\Tr (B^4) x} + 2\|B^2\|_\infty x} \leq \exp\left(-x\right), \qquad x >0.
\end{equation}
The situation $w = 0$ has also been considered by Spokoiny \& Zhilova \citep{sz13}, where the following inequality is proven:
\begin{equation}\label{eq:sz_tail}
\Prob{}{f_0\left(\zeta\right)> \Tr (B^2) + \max\left\{2 \sqrt{2\Tr (B^4)x}, 6\|B^2\|_\infty x\right\}} \leq \exp\left(-x\right), \qquad x >0
\end{equation}
Comparing \eqref{eq:hsu_tail} with \eqref{eq:sz_tail} gives that \eqref{eq:sz_tail} is sharper for moderate deviations, i.e. if
\[
(3-2\sqrt 2) \frac{\Tr\left(B^4\right)}{\|B^2\|^2_\infty} \leq x \leq \frac14 \frac{\Tr\left(B^4\right)}{\|B^2\|^2_\infty},
\]
for large and small deviations it is weaker. We are actually interested in large deviations, and furthermore the restriction $a_i = 0$ makes \eqref{eq:sz_tail} not applicable for our purpose.

\section{Proofs of Section \ref{sec:method}}

\begin{proof}[Proof of Lemma \ref{lem:estimate_1}]
We follow \citep{ds01},
\begin{align*}
1-\bar\beta \left(\Phi_n\right)- \alpha \leq&\inf\limits_{\left(\mu_n,\lambda_n\right) \in S_n} \E{\mu_{n},\lambda_{n}}{\Phi_n\left(Y\right)} - \alpha \\[0.1cm]
\leq& \inf\limits_{I_n \in \mathcal A_n} \E{\mu_n = \Delta_n 1_{I_n},\lambda_n^2 = \sigma_0^2 + \sigma_n^2 1_{I_n}}{\Phi_n\left(Y\right)} - \alpha \\[0.1cm]
\leq& \frac{1}{l_n} \sum\limits_{I_n \in \mathcal A_n} \left[\E{\mu_n = \Delta_n 1_{I_n},\lambda_n^2 = \sigma_0^2 + \sigma_n^2 1_{I_n}}{\Phi_n\left(Y\right)} -\alpha\right] \\[0.1cm]
\leq&  \frac{1}{l_n} \sum\limits_{I_n \in \mathcal A_n} \E{\mu_n = \Delta_n 1_{I_n},\lambda_n^2 = \sigma_0^2 + \sigma_n^2 1_{I_n}}{\Phi_n\left(Y\right)-\E{\mu\equiv 0}{\Phi_n\left(Y\right)}} + o \left(1\right) \\[0.1cm]
=& \E{\mu \equiv 0,\lambda\equiv\sigma_0}{\left(~\frac{1}{l_n}\sum\limits_{I_n \in \mathcal A_n} L_{n}\left(\Delta_n,I_n,\kappa_n,Y\right) -1\right) \Phi_n\left(Y\right)} + o \left(1\right) \\[0.1cm]
\leq& \E{\mu \equiv 0,\lambda\equiv\sigma_0}{\left| \frac{1}{l_n} \sum\limits_{I_n \in \mathcal A_n} L_{n}\left(\Delta_n,I_n,\kappa_n,Y\right)-1\right|} + o \left(1\right),
\end{align*}
as $n \to \infty$. 
\end{proof}

\begin{proof}[Proof of Theorem \ref{thm:characterization}]
By Lemma \ref{lem:estimate_1} we have to prove that the right-hand side of \eqref{eq:estimate_1} tends to $0$ under the given conditions. 

Suppose that 
\begin{equation}\label{eq:aux_new}
\lim\limits_{n \to \infty} \E{\mu \equiv 0,\lambda\equiv\sigma_0}{1_{\left\{\left|L_{n}\left(\Delta_n,I_n,\kappa_n,Y\right) -1\right| \geq \xi l_n\right\}}\left|L_{n}\left(\Delta_n,I_n,\kappa_n,Y\right) -1\right|} = 0
\end{equation}
for any $\xi >0$. Then the weak law of large numbers for triangular arrays (the condition \eqref{eq:aux_new} implies the conditions imposed in \citep[Ex. 2.2]{s00}) is applicable (as $\left|I_n\right| \searrow 0$ and hence $l_n \nearrow \infty$), i.e. $m^{-1}\sum_{i=1}^m Z_i \to 1$ in probability with $m = l_n$ and $Z_i = L_{n}\left(\Delta_n,\left[\left(i-1\right)\left|I_n\right|,i\left|I_n\right|\right],\kappa_n,Y\right)$. As $Z_i >0$ we have 
\[
\E{}{\left|\frac1m\sum\limits_{i=1}^m Z_i\right|} = \frac1m\sum\limits_{i=1}^m\E{}{ \left|Z_i\right|} = \frac1m\sum_{i=1}^m\E{}{Z_i} = 1\qquad\text{ for all }m \in \mathbb N,
\]
which then implies $m^{-1}\sum_{i=1}^m Z_i \to 1$ in expectation, i.e. \eqref{eq:estimate_1} tends to $0$.

The only thing left to prove is that \eqref{eq:aux_new} holds true under the imposed conditions. To show this we will use the moments of $L_n = L_{n}\left(\Delta_n,I_n,\kappa_n,Y\right)$ under the null hypothesis. Note that under the null hypothesis $\mu \equiv 0$ it holds $Y_i \stackrel{\text{i.i.d.}}{\sim} \mathcal N \left(0,\sigma_0^2\right)$ and thus due to independence for $\eta>0$
\begin{align}
\E{\mu \equiv 0,\lambda\equiv\sigma_0}{L_n^\eta} =& \left(1+\kappa_n^2\right)^{-\eta\frac{n\left|I_n\right|}{2}} \E{\mu \equiv 0,\lambda\equiv\sigma_0}{\exp\left(\eta\sum\limits_{i: \frac{i}{n} \in A_{n,j}} \frac{\kappa_n^2Y_i^2 + 2Y_i \Delta_n - \Delta_n^2}{2 \left(1+\kappa_n^2\right) \sigma_0^2}\right)} \nonumber\\[0.1cm]
=&\left(1+\kappa_n^2\right)^{-\eta\frac{n\left|I_n\right|}{2}} \E{\mu \equiv 0,\lambda\equiv\sigma_0}{\prod\limits_{i: \frac{i}{n} \in A_{n,j}} \exp\left(\eta\frac{\kappa_n^2Y_i^2 + 2Y_i \Delta_n - \Delta_n^2}{2\left(1+\kappa_n^2\right) \sigma_0^2}\right)} \nonumber\\[0.1cm]
=& \left(1+\kappa_n^2\right)^{-\eta\frac{n\left|I_n\right|}{2}} \prod\limits_{i: \frac{i}{n} \in A_{n,j}} \E{}{\exp\left(\eta\frac{\kappa_n^2\sigma_0^2X^2 + 2\sigma_0X \Delta_n - \Delta_n^2}{2 \left(1+\kappa_n^2\right) \sigma_0^2}\right)} \nonumber\\[0.1cm]
=& \left(1+\kappa_n^2\right)^{-\eta\frac{n\left|I_n\right|}{2}} \left(\E{}{\exp\left(\eta\frac{\kappa_n^2\sigma_0^2X^2 + 2\sigma_0X \Delta_n - \Delta_n^2}{2\left(1+\kappa_n^2\right) \sigma_0^2}\right)} \right)^{n \left|I_n\right|},\label{eq:aux2}
\end{align}
where $X \sim \mathcal N \left(0,1\right)$. Thus we need to calculate the expectation of
\begin{equation}\label{eq:aux4}
\exp\left(\eta\frac{\kappa_n^2\sigma_0^2 X^2 + 2\sigma_0X \Delta_n - \Delta_n^2}{2\left(1+\kappa_n^2\right) \sigma_0^2}\right) = \exp\left(- \frac{ \eta\Delta_n^2}{2\sigma_0^2\kappa_n^2}\right) \cdot \exp\left(\eta s \left(X+\lambda\right)^2\right)
\end{equation}
where we abbreviated
\begin{align}
s &= \frac{\kappa_n^2}{2\left(1+\kappa_n^2\right)} \\[0.1cm]
\lambda &= \frac{\Delta_n}{ \sigma_0\kappa_n^2}.
\end{align}
The right-hand side in \eqref{eq:aux4} corresponds to the Laplace transform of a non-central $\chi^2$ distributed random variable given by
\begin{equation}\label{eq:laplace_transform}
\E{}{\exp\left(t \left(X + \lambda\right)^2\right)} = \frac{\exp\left(\frac{\lambda^2 t}{1-2t}\right)}{\sqrt{1-2t}}
\end{equation}
if $\lambda  \in \mathbb R$, $t <1/2$, otherwise the Laplace transform does not exist. Note that $t = \eta s <1/2$ if and only if $\eta < 1 + 1/\kappa_n^2)$. In this case the expectation is given by
\begin{align*}
\E{}{\exp\left(\eta\frac{\kappa_n^2\sigma_0^2X^2 + 2\sigma_0X \Delta_n - \Delta_n^2}{2\left(1+\kappa_n^2\right) \sigma_0^2}\right)} =& \exp\left(- \frac{\eta\Delta_n^2}{2\sigma_0^2\kappa_n^2}\right)\E{}{\exp\left(\eta s \left(X + \lambda\right)^2\right)} \\[0.1cm]
=& \frac{\sqrt{1+\kappa_n^2}}{\sqrt{1+\left(1 - \eta\right) \kappa_n^2}} \exp\left(\frac{\eta \left(\eta-1\right) \Delta_n^2}{2 \sigma_0^2 \left(1+\left(1 - \eta\right) \kappa_n^2\right)}\right). 
\end{align*}
By \eqref{eq:aux2} this implies
\begin{align*}
&\E{\mu \equiv 0,\lambda\equiv\sigma_0}{L_n^\eta} \\[0.1cm]
=& \left(1+\kappa_n^2\right)^{-\eta\frac{n\left|I_n\right|}{2}} \left(\frac{\sqrt{1+\kappa_n^2}}{\sqrt{1+\left(1 - \eta\right) \kappa_n^2}} \exp\left(\frac{\eta \left(\eta-1\right) \Delta_n^2}{2 \sigma_0^2 \left(1+\left(1 - \eta\right) \kappa_n^2\right)}\right)\right)^{n \left|I_n\right|} \\[0.1cm]
=& \frac{\left(1+\kappa_n^2\right)^{-\left(\eta-1\right)\frac{n\left|I_n\right|}{2}}}{\left(1+\left(1 - \eta\right) \kappa_n^2\right)^{\frac{n\left|I_n\right|}{2}}} \exp\left(\frac{\eta \left(\eta-1\right) n \left|I_n\right| \Delta_n^2}{2 \sigma_0^2 \left(1+\left(1 - \eta\right) \kappa_n^2\right)}\right).
\end{align*}
To prove \eqref{eq:aux_new} we now note for any $n$ such that $l_n \geq 1/\xi$ as $L_n>0$ we have
\begin{align}
&\E{\mu \equiv 0,\lambda\equiv\sigma_0}{1_{\left\{\left|L_n -1\right| \geq \xi l_n\right\}}\left|L_n -1\right|}\nonumber \\[0.1cm]
=& \int\limits_{\xi l_n}^\infty \Prob{\mu \equiv 0,\lambda\equiv\sigma_0}{L_n \geq x+1}\,\mathrm d x + \xi l_n \Prob{\mu \equiv 0,\lambda\equiv\sigma_0}{L_n \geq \xi l_n +1}\nonumber\\[0.1cm]
\leq& \int\limits_{\xi l_n}^\infty \Prob{\mu \equiv 0,\lambda\equiv\sigma_0}{L_n \geq x}\,\mathrm d x + \xi l_n \Prob{\mu \equiv 0,\lambda\equiv\sigma_0}{L_n \geq \xi l_n}\nonumber\\[0.1cm]
=& \E{\mu \equiv 0,\lambda\equiv\sigma_0}{1_{\left\{L_n \geq \xi l_n\right\}}L_n} \nonumber\\[0.1cm]
\leq& \E{\mu \equiv 0,\lambda\equiv\sigma_0}{\left(L_n\right)^{1+\delta}} \left(\xi l_n\right)^{-\delta} \nonumber\\[0.1cm]
=&\exp\left(\frac{n \left|I_n\right|\Delta_n^2}{2 \sigma_0^2} \frac{\left(1+\delta\right)\delta}{1-\delta\kappa_n^2} - \delta \frac{n\left|I_n\right|}{2} \log\left(1+\kappa_n^2\right) - \frac{n \left|I_n\right|}{2} \log\left(1- \delta\kappa_n^2\right) + \delta \log\left(\left|I_n\right|\right) - \delta \log\left(\xi\right) \right).
\label{eq:aux3}
\end{align}
Now, for \eqref{eq:aux3} to tend to $0$, the exponent must converge to $-\infty$. Consequently, the weak law of large numbers is applicable and finally \eqref{eq:estimate_1} tends to $0$ if \eqref{eq:cond} holds true.
\end{proof}

\begin{proof}[Proof of Theorem \ref{thm:LR_gen}]
Recall that the test statistic is given by
\[
T_n(Y)=\sup_{A_n\in\mathcal A_n} S(A_n),
\]
with the inner part
\[
S(A_n):= \frac{1}{\sigma_0^2} \sum\limits_{i: \frac{i}{n} \in I_n} \left(Y_i + \frac{\Delta_n}{\kappa_n^2} \right)^2
\]
We readily see that $S\left(A_n\right)$ obeys the following distributions
\begin{align*}
\text{Under }H_0: &\qquad S(A_n) \sim \chi^2_{n\left|I_n\right|} \left(\frac{n \left|I_n\right|\Delta_n^2}{\sigma_0^2\kappa_n^4}\right), \\[0.1cm]
\text{Under }H_1^n: &\qquad S(I_n) \sim \left(1+\kappa_n^2\right) \chi^2_{n\left|I_n\right|} \left(\left(1+\kappa_n^2\right)\frac{n\left|I_n\right|\Delta_n^2}{\sigma_0^2\kappa_n^4}\right).
\end{align*}
Here $I_n$ in the alternative denotes the true position of the jump. 

Using \eqref{eq:tail_upper} with $k = 1$, $b_1 = 1$, $d_1 = n \left|I_n\right|$ and $a_1^2 = \frac{n\left|I_n\right|}{\sigma_0^2 \kappa_n^4}$ we find
\begin{align*}
\Prob{H_0}{\sup_{I_n\in\mathcal A_n} S(I_n)>c_{\alpha,n}^*} \leq & \frac1{\left|I_n\right|} \Prob{H_0}{S(I_n)>c_{\alpha,n}^*} \\[0.1cm]
=&\frac1{\left|I_n\right|} \Prob{}{\chi^2_{n\left|I_n\right|} \left(\frac{n \left|I_n\right|\Delta_n^2}{\sigma_0^2\kappa_n^4}\right) > c_{\alpha,n}^*} \\[0.1cm]
\leq &\alpha.
\end{align*}

For the type II error denote
\[
y_{n,\alpha}=\left(1+\kappa_n^2\right)n\left|I_n\right| + \left(1+\kappa_n^2\right)^2 \frac{n\left|I_n\right|\Delta_n^2}{\sigma_0^2 \kappa_n ^4} - 2(1+\kappa_n^2)\sqrt{\left(n \left|I_n\right| + 2 \left(1+\kappa_n^2\right)\frac{n\left|I_n\right|\Delta_n^2}{\sigma_0^2\kappa_n^4}\right)\log\frac1\alpha}.
\]
Then we calculate using \eqref{eq:tail_lower} with $k = 1$, $b_1 = \left(1+\kappa_n^2\right)$, $d_1 = n \left|I_n\right|$ and $a^2 = \left(1+\kappa_n^2\right)\frac{n\left|I_n\right|\Delta_n^2}{\sigma_0^2 \kappa_n^4}$ that
\begin{align*}
\sup\limits_{B_n \in \mathcal A_n} \Prob{B_n}{\sup\limits_{A_n\in\mathcal A_n} S(A_n)<y_{n,\alpha}} \leq & \sup\limits_{B_n \in \mathcal A_n} \inf\limits_{A_n\in\mathcal A_n} \Prob{B_n}{S(A_n)<y_{n,\alpha}} \\[0.1cm]
\leq &\sup\limits_{B_n \in \mathcal A_n} \Prob{B_n}{S(B_n)<y_{n,\alpha} }\\[0.1cm]
=& \Prob{} {\left(1+\kappa_n^2\right)\chi^2_{n\left|I_n\right|} \left(\left(1+\kappa_n^2\right)\frac{n\left|I_n\right|\Delta_n^2}{\sigma_0^2\kappa_n^4}\right) <y_{n,\alpha}} \\[0.1cm]
\leq &\alpha.
\end{align*}
Thus the claim is proven if $y_{n,\alpha}\geq c_{n,\alpha}^*$, i.e.
\begin{align*}
& \left(1+\kappa_n^2\right)n\left|I_n\right| + \left(1+\kappa_n^2\right)^2 \frac{n\left|I_n\right|\Delta_n^2}{\sigma_0^2 \kappa_n ^4} - 2(1+\kappa_n^2)\sqrt{\left(n \left|I_n\right| + 2 \left(1+\kappa_n^2\right)\frac{n\left|I_n\right|\Delta_n^2}{\sigma_0^2\kappa_n^4}\right)\log\left(\frac1\alpha\right)} \\[0.1cm]
\geq & n \left|I_n\right| + \frac{n \left|I_n\right|\Delta_n^2}{\sigma_0^2 \kappa_n^4} -2 \log\left(\alpha \left|I_n\right|\right) + 2 \sqrt{n \left|I_n\right|\left(1+2\frac{\Delta_n^2}{\sigma_0^2\kappa_n^4}\right)\log\left(\frac{1}{\alpha\left|I_n\right|}\right)}.
\end{align*}
But it can be easily seen by rearranging terms and multiplying by $\kappa_n^2$ that this is equivalent to \eqref{eq:cond_LR_test}. 
\end{proof}

\section{Proofs of Section \ref{sec:lower_bounds}}

If $\delta_n\kappa_n^2 \to 0$ and / or $\kappa_n \to 0$, Taylor's formula can be used to simplify some of the terms in \eqref{eq:cond}, which we will use in the following:
\begin{subequations}
\begin{align}
\frac{\left(1+\delta_n\right)\delta_n}{1-\delta_n\kappa_n^2} &=  \delta_n + \delta_n^2\left(1+\kappa_n^2\right) + \delta_n^3\kappa_n^2\left(1+\kappa_n^2\right) + \kappa_n^4\delta_n^4+\mathcal O \left(\kappa_n^6\delta_n^4\right), \label{eq:taylor1}\\[0.1cm]
-\log\left(1- \delta_n\kappa_n^2\right) &= \delta_n\kappa_n^2 + \frac{\delta_n^2\kappa_n^4}{2} + \mathcal O \left(\kappa_n^6\delta_n^3\right),\label{eq:taylor3}\\[0.1cm]
- \delta_n \log\left(1+\kappa_n^2\right) &= -\delta_n \kappa_n^2 +\frac{\delta_n\kappa_n^4}{2} +  \mathcal O \left(\kappa_n^6\delta_n \right). \label{eq:taylor2}
\end{align}
\end{subequations}

\begin{proof}[Proof of Theorem \ref{thm:db_equilibrium}]
Note that $\delta_n < \frac{1}{\kappa_n^2}$ is satisfied for free here as the right-hand side diverges or is constant and $\delta_n \to 0$. Furthermore in both cases the assumption $n \left|I_n\right| \Delta_n = \mathcal O \left(\log\left(\left|I_n\right|\right)\right)$ is satisfied as well. Thus we can apply Theorem \ref{thm:characterization}.
\begin{enumerate}
\item Under the assumptions of this part of the theorem, we may use \eqref{eq:taylor1}--\eqref{eq:taylor2} and insert $\kappa_n^2 = c \Delta_n / \sigma_0 \left(1+o\left(\eps_n\right)\right)$ to find that \eqref{eq:cond} is satisfied if
\begin{equation}\label{eq:cond_b_equiv}
\delta_n \left[\frac{n \left|I_n\right| \Delta_n^2}{2\sigma_0^2}\left(1+\frac{c^2}{2}\right) + \log\left(\left|I_n\right|\right) \right] + \delta_n^2 \frac{n \left|I_n\right| \Delta_n^2}{2\sigma_0^2} \left(1+\frac{c^2}{2}\right)  + o\left(\delta_n\eps_n n \left|I_n\right| \Delta_n^2\right)\to - \infty.
\end{equation}
Here we used that $\Delta_n = o \left(\eps_n\right)$. Then with $\delta_n := \sigma_0^{-1}\eps_n$ and using \eqref{eq:lower_bound_equilibrium1} we obtain 
\begin{align*}
\delta_n \left[\frac{n \left|I_n\right| \Delta_n^2}{2\sigma_0^2}\left(1+\frac{c^2}{2}\right) + \log\left(\left|I_n\right|\right) \right] &= \sigma_0^{-1}\eps_n \log\left(\left|I_n\right|\right)\left(1- \left(C-\eps_n\right)^2C^{-2}\right) \\[0.1cm]
&= 2\sigma_0^{-1}C^{-1}\eps_n^2 \log\left(\left|I_n\right|\right) - \sigma_0^{-1} C^{-2}\eps_n^3\log\left(\left|I_n\right|\right)
\end{align*}
and furthermore
\begin{align*}
& \delta_n^2 \frac{n \left|I_n\right| \Delta_n^2}{2\sigma_0^2} \left(1+\frac{c^2}{2}\right) + 2\sigma_0^{-1}C^{-1}\eps_n^2 \log\left(\left|I_n\right|\right) \\[0.1cm]
=& \sigma_0^{-1}\eps_n^2 \log\left(\left|I_n\right|\right) \left(-\sigma_0^{-1}\left(C-\eps_n\right)^2C^{-2}+ 2 C^{-1} \right) \\[0.1cm]
=& \left(\sigma_0^{-2}\left(\sqrt{2+c^2}-1\right)  \eps_n^2 + \mathcal O \left(\eps_n^3\right) \right) \log\left(\left|I_n\right|\right).
\end{align*}
It can be readily seen that $C_1 := \sqrt{2+c^2}-1\geq \sqrt{2}-1>0$ for all $c \geq 0$. Thus under \eqref{eq:lower_bound_equilibrium1} we have
\begin{align*}
&\delta_n \left[\frac{n \left|I_n\right| \Delta_n^2}{2\sigma_0^2}\left(1+\frac{c^2}{2}\right) + \log\left(\left|I_n\right|\right) \right] + \delta_n^2 \frac{n \left|I_n\right| \Delta_n^2}{2\sigma_0^2} \left(1+\frac{c^2}{2}\right) \\[0.1cm]
=&\left(C_1\eps_n^2 + o\left(\eps_n^2\right) \right) \log\left(\left|I_n\right|\right) \to -\infty
\end{align*}
as $n  \to \infty$. Thus \eqref{eq:cond_b_equiv} is satisfied.
\item Here only \eqref{eq:taylor1} and \eqref{eq:taylor3} can be applied. Thus \eqref{eq:cond} is satisfied if
\begin{equation}\label{eq:cond_b_equiv3}
\begin{aligned}
&\delta_n \left[n \left|I_n\right| \left(\frac{\Delta_n^2}{2\sigma_0^2} - \frac12\log\left(1+\kappa^2\right) + \frac{\kappa^2}{2} + o \left(\eps_n\right)\right) + \log\left(\left|I_n\right|\right) \right] \\[0.1cm]
+& \delta_n^2 n \left|I_n\right| \left[\frac{\Delta_n^2}{2\sigma_0^2}\left(1+\kappa^2\right)+  \frac{\kappa^4}{4} + o \left(\eps_n\right)\right] + \mathcal O \left(\delta_n^3 n \left|I_n\right|\right) \to - \infty
\end{aligned}
\end{equation}
Inserting the assumption $\Delta_n = \frac{\sigma^2}{c\sigma_0}\left(1+o\left(\eps_n\right)\right) = \frac{\sigma_0 \kappa^2}{c}\left(1+o\left(\eps_n\right)\right)$, \eqref{eq:lower_bound_equilibrium2} and $\delta_n := \eta \eps_n$ with $\eta>0$ to be chosen later we find
\begin{align*}
&\delta_n \left[n \left|I_n\right| \left(\frac{\Delta_n^2}{2\sigma_0^2} - \frac12\log\left(1+\kappa^2\right) + \frac{\kappa^2}{2} + o \left(\eps_n\right)\right) + \log\left(\left|I_n\right|\right) \right] \\[0.1cm]
=&\eta\eps_n \log\left(\left|I_n\right|\right) \left[1-\left(C-\eps_n\right)^2\left(\frac{\kappa^2}{2c^2}\left(\kappa^2 + c^2\right) - \frac12\log\left(1+\kappa^2\right)\right) + o \left(\eps_n\right)\right] \\[0.1cm]
=& 2\eta C^{-1} \eps_n^2 \log\left(\left|I_n\right|\right)\left(1 + o\left(1\right)\right)
\end{align*}
and
\begin{align*}
&\delta_n^2 n \left|I_n\right| \left[\frac{\Delta_n^2}{2\sigma_0^2}\left(1+\kappa^2\right)+ \frac{\kappa^4}{4} + o \left(\eps_n\right)\right] + 2\eta C^{-1} \eps_n^2 \log\left(\left|I_n\right|\right)\left(1 + o\left(1\right)\right) + \mathcal O \left(\delta_n^3 n \left|I_n\right|\right)\\[0.1cm]
=&  \eta^2\eps_n^2 \log\left(\left|I_n\right|\right)  \left( 2\eta^{-1}C^{-1} - \left(C-\eps_n\right)^2 \left(\frac{\kappa^4}{2c^2}\left(1+\kappa^2\right) + \frac{\kappa^4}{4}\right) + o\left(1\right)\right) \\[0.1cm]
=& \eta^2\eps_n^2 \log\left(\left|I_n\right|\right)  \left( 2\eta^{-1}C^{-1} - C^2\left(\frac{\kappa^4}{2c^2}\left(1+\kappa^2\right) + \frac{\kappa^4}{4}\right)  + o\left(1\right)\right) \to - \infty
\end{align*}
if we choose e.g. $\eta^{-1} = C^3 \left(\frac{\kappa^4}{2c^2}\left(1+\kappa^2\right) + \frac{\kappa^4}{4}\right)$. Thus \eqref{eq:cond} is satisfied. 
\end{enumerate}
\end{proof}

\begin{proof}[Proof of Theorem \ref{thm:db_signal}]
\begin{enumerate}
\item This follows directly from Theorem \ref{thm:db_equilibrium} with $c = c_n \to 0$.
\item Similarly as in the proof of Theorem \ref{thm:db_equilibrium} we have to show that \eqref{eq:cond_b_equiv3} is satisfied. Therefore note that with $\delta_n :=\eta\eps_n$ with $\eta>0$ to be chosen later we have
\begin{align*}
&\delta_n \left[n \left|I_n\right| \left(\frac{\Delta_n^2}{2\sigma_0^2} - \log\left(1+\kappa^2\right) + \frac{\kappa^2}{2} + o \left(\eps_n\right)\right) + \log\left(\left|I_n\right|\right) \right] \\[0.1cm]
=&\eta\eps_n \log\left(\left|I_n\right|\right) \left[1-\left(\sqrt{2}\sigma_0-\eps_n\right)^2 \left(\frac{1}{2\sigma_0^2} + \frac{1}{\Delta_n^2}\left(\frac{\kappa^2}{2}-\log\left(1+\kappa^2\right)\right)  + o \left(\eps_n\right)\right)\right] \\[0.1cm]
=& \sqrt{2}\eta\sigma_0^{-1} \eps_n^2 \log\left(\left|I_n\right|\right)\left(1+o\left(1\right)\right)
\end{align*}
by the assumption that $1/\Delta_n^2 =o \left(\eps_n\right)$. Furthermore
\begin{align*}
&\delta_n^2 n \left|I_n\right| \left[\frac{\Delta_n^2}{2\sigma_0^2}\left(1+\kappa^2\right)+ \frac{\kappa^4}{4}\right] + \sqrt{2}\eta\sigma_0^{-1} \eps_n^2 \log\left(\left|I_n\right|\right)\left(1+o\left(1\right)\right)\\[0.1cm]
=&\eta^2  \eps_n^2 \log\left(\left|I_n\right|\right) \left(\sqrt{2}\eta^{-1}\sigma_0^{-1} - \left(1+\kappa^2\right)+ o \left(1\right) \right) \to - \infty
\end{align*}
if we choose e.g. $\eta =\sigma_0^{-1}\left(1+\kappa^2\right)^{-1}$. This proves the claim.
\end{enumerate}
\end{proof}

\begin{proof}[Proof of Theorem \ref{thm:db_variance}]
Note again that $\delta_n < \frac{1}{\kappa_n^2}$ is satisfied for free here as the right-hand side diverges or is constant and $\delta_n \to 0$. Furthermore in both cases the assumption $n \left|I_n\right| \Delta_n = \mathcal O \left(\log\left(\left|I_n\right|\right)\right)$ is satisfied as well. Thus we can apply Theorem \ref{thm:characterization}.
\begin{enumerate}
\item Under the assumptions of this part of the theorem, we may again use \eqref{eq:taylor1}-\eqref{eq:taylor2} to find that \eqref{eq:cond} is satisfied if
\begin{equation}\label{eq:cond_b_equiv2}
\delta_n \left(\frac{ n \left|I_n\right| \Delta_n^2}{2 \sigma_0^2} + n \left|I_n\right|\frac{\kappa_n^4}{4}+\log\left(\left|I_n\right|\right)\right) +\delta_n^2 \left(\frac{n \left|I_n\right| \Delta_n^2}{2 \sigma_0^2} +n \left|I_n\right|\frac{\kappa_n ^4}{4} \right)  + o\left(\delta_n\eps_n\log\left(\left|I_n\right|\right)\right)\to -\infty
\end{equation}
Here we used that $\kappa_n^2 = o\left(\eps_n\right)$. By $\kappa_n^2 = \frac{\sigma_{n}^2}{\sigma_0^2} = \frac{\Delta_n}{\sigma_0} \frac{1}{\theta_n}$, we have under \eqref{eq:lower_bound_variance1} with $\delta_n := \sigma_0^{-1}\eps_n$ that
\begin{align*}
&\delta_n \left(\frac{ n \left|I_n\right| \Delta_n^2}{2 \sigma_0^2} + n \left|I_n\right|\frac{\kappa_n^4}{4}+\log\left(\left|I_n\right|\right)\right) \\[0.1cm]
=& \sigma_0^{-1}\eps_n \left(\frac{n \left|I_n\right|\Delta_n^2}{2\sigma_0^2} \left(1+\frac{1}{2\theta_n^2}\right) +\log\left(\left|I_n\right|\right)\right)\\[0.1cm]
=& \sigma_0^{-1}\eps_n\log\left(\left|I_n\right|\right) \left(1-\frac{\left(2\sigma_0-\eps_n\right)^2}{2\sigma_0^2}\left(1 + \theta_n^2\right)\right)\\[0.1cm]
=& -2\eps_n\log\left(\left|I_n\right|\right) \frac{\theta_n^2}{\sigma_0} + \left(\frac{2}{\sigma_0^2} \eps_n^2\log\left(\left|I_n\right|\right) - \frac{1}{2\sigma_0^3}\eps_n^3 \log\left(\left|I_n\right|\right)\right)\left(\frac12 + \theta_n^2\right) \\[0.1cm]
=& \eps_n^2\log\left(\left|I_n\right|\right) \left(\frac{2}{\sigma_0^2}  + o \left(1\right)\right),
\end{align*}
where we used $\theta_n^2  = o \left(\eps_n\right)$. Furthermore
\begin{align*}
&\delta_n^2 \left(\frac{n \left|I_n\right| \Delta_n^2}{2 \sigma_0^2} + n\left|I_n\right|\frac{\kappa_n^4}{4}\right) +\eps_n^2\log\left(\left|I_n\right|\right) \left(\frac{2}{\sigma_0^2}  + o \left(1\right)\right) + o\left(\delta_n\eps_n\log\left(\left|I_n\right|\right)\right)
\\[0.1cm]
=&\sigma_0^{-2} \eps_n^2\log\left(\left|I_n\right|\right) \left(2- \frac{\left(2\sigma_0-\eps_n\right)^2}{2\sigma_0^2}\left(\frac12 + \theta_n^2\right)+ o \left(1\right)\right)\\[0.1cm]
=& \eps_n^2\log \left(\left|I_n\right|\right) \left(1  + o \left(1\right)\right) \to - \infty.
\end{align*}
Thus \eqref{eq:cond_b_equiv2} and hence \eqref{eq:cond} is satisfied.
\item Similarly as in the proof of Theorem \ref{thm:db_equilibrium} we have to show that \eqref{eq:cond_b_equiv3} is satisfied. Therefore note that with $\delta_n := \eta\eps_n$ where $\eta>0$ will be chosen later we have
\begin{align*}
&\delta_n \left[n \left|I_n\right| \left(\frac{\Delta_n^2}{2\sigma_0^2} - \frac12\log\left(1+\kappa^2\right) + \frac{\kappa^2}{2}+ o\left(\eps_n\right)\right) + \log\left(\left|I_n\right|\right) \right] \\[0.1cm]
=&\eta\eps_n \log\left(\left|I_n\right|\right)\left[1+ \left(C-\eps_n\right)^2 \left( \frac12\log\left(1+\kappa^2\right) - \frac{\kappa^2}{2}\right)\right] + o\left(\eps_n^2 \log\left(\left|I_n\right|\right)\right)\\[0.1cm]
=&2C^{-1}\eta\eps_n^2 \log\left(\left|I_n\right|\right)\left(1+o\left(1\right)\right)
\end{align*}
by the assumption that $\Delta_n^2 = o \left(\eps_n\right)$. Furthermore
\begin{align*}
&\delta_n^2 n \left|I_n\right| \left[\frac{\Delta_n^2}{2\sigma_0^2}+ \frac{\kappa^4}{4}\right] +2C^{-1}\eta\eps_n^2 \log\left(\left|I_n\right|\right)\left(1+o\left(1\right)\right)\\[0.1cm]
=&\eta^2 \eps_n^2 \log\left(\left|I_n\right|\right) \left(2C^{-1}\eta^{-1}  - \left(C-\eps_n\right)^2\frac{\kappa^4}{4} + o \left(1\right) \right) \to - \infty
\end{align*}
if we choose e.g. $\eta^{-1} = C^3 \frac{\kappa^4}{4}$, which proves the claim.
\end{enumerate}
\end{proof}

\section{Proofs of Section \ref{sec:upper_bounds}}

\begin{proof}[Proof of Theorem \ref{thm:LR_equilibrium}]
Due to Theorem \ref{thm:LR_gen} we only have to show that under the assumptions of the theorem the condition \eqref{eq:cond_LR_test} is satisfied as $n \to \infty$. The given situation is $\kappa_n^2 = c \Delta_n / \sigma_0 \left(1+o\left(\eps_n\right)\right)$. Inserting this into \eqref{eq:cond_LR_test} we have to show that
\begin{align*}
&\left(c^2 + 2\right) \frac{n\left|I_n\right|\Delta_n^2}{\sigma_0^2} + c\frac{\Delta_n}{\sigma_0} \frac{n \left|I_n\right|\Delta_n^2}{\sigma_0^2}  \\[0.1cm]
\geq &\frac{2 c \Delta_n}{\sigma_0} \log\left(\frac1{\alpha\left|I_n\right|}\right) + 2 \sqrt{\frac{n\left|I_n\right|\Delta_n^2}{\sigma_0^2} \left(2+c^2 \right) \log\left(\frac{1}{\alpha \left|I_n\right|}\right)} \\[0.1cm]
&+2 \left(1+\frac{c\Delta_n}{\sigma_0}\right) \sqrt{\frac{n\left|I_n\right|\Delta_n^2}{\sigma_0} \left(c^2 + 2  + \frac{2c\Delta_n}{\sigma_0}\right) \log\left(\frac1\alpha\right)}
\end{align*}
as $n \to \infty$. Inserting \eqref{eq:condition_equilibrium} and dividing by $-\log\left(\left|I_n\right|\right)$ yields that 
\begin{align*}
&\left(c^2 + 2\right) \frac{\left(C+\eps_n\right)^2}{\sigma_0^2} + c\frac{\Delta_n}{\sigma_0} \frac{\left(C+\eps_n\right)^2}{\sigma_0^2}  \\[0.1cm]
\geq &\frac{2 c \Delta_n}{\sigma_0} \left(1+\frac{\log\alpha}{\log\left(|I_n|\right)}\right) + 2 \sqrt{\frac{\left(C+\eps_n\right)^2}{\sigma_0^2} \left(2+c^2 \right) \frac{\log\left(\alpha\right) + \log\left(\left|I_n\right|\right)}{\log\left(\left|I_n\right|\right)}} \\[0.1cm]
&+2 \left(1+\frac{c\Delta_n}{\sigma_0}\right) \sqrt{\left(c^2 + 2  + \frac{2c\Delta_n}{\sigma_0}\right)\frac{\left(C+\eps_n\right)^2}{\sigma_0^2} \frac{\log\left(\alpha\right)}{\log\left(\left|I_n\right|\right)}}
\end{align*}
is sufficient as $n \to \infty$. The definition of $C$ and some straightforward calculations using $\Delta_n = o \left(\eps_n\right)$ and $\log\alpha/\log (|I_n|) \to 0$ show that is enough to prove
\begin{align*}
4 + \frac{4 \sqrt{c^2+2}}{\sigma_0} \eps_n + \frac{c^2 + 2}{\sigma_0^2} \eps_n^2> 4 + \frac{2 \sqrt{2+c^2}}{\sigma_0} \eps_n + 8 \sqrt{\frac{\log\left(\alpha\right)}{\log\left(\left|I_n\right|\right)}} + \frac{4\sqrt{2+c^2}}{\sigma_0} \eps_n \sqrt{\frac{\log\left(\alpha\right)}{\log\left(\left|I_n\right|\right)}},
\end{align*}
which directly follows from $\eps_n \sqrt{-\log\left(\left|I_n\right|\right)} \to \infty$. 
\end{proof}

\begin{proof}[Proof of Theorem \ref{thm:LR_homogeneous}]
\begin{enumerate}
\item This follows directly from Theorem \ref{thm:db_equilibrium} with $c = c_n \to 0$.
\item Inserting the assumptions and dropping all lower-order contributions, we find with $\kappa:= \sigma  / \sigma_0$ that for \eqref{eq:cond_LR_test} the following is sufficient as $n \to \infty$:
\begin{align*}
\left(\kappa^2 + 2\right) \frac{n \left|I_n\right|\Delta_n^2}{\sigma_0^2} \geq &2 \kappa^2 \log \left(\frac{1}{\left|I_n\right|}\right) + 2\sqrt{2}\frac{\sqrt{n \left|I_n\right|}\Delta_n}{\sigma_0} \sqrt{\log \left(\frac{1}{\left|I_n\right|}\right)} \\[0.1cm]
&+2 \sqrt{2}\left(1+\kappa\right)^{3/2}\frac{\sqrt{n \left|I_n\right|}\Delta_n}{\sigma_0} \sqrt{\log\left(\frac1\alpha\right)}.
\end{align*}
Inserting \eqref{eq:condition_homogeneous} and dividing by $-\log\left(\left|I_n\right|\right)$, we see that it is enough to prove
\[
\left(\kappa^2 + 2\right) \frac{\left(\sqrt{2}\sigma_0 + \eps_n\right)^2}{\sigma_0^2} \geq 2 \kappa^2 + 2\sqrt{2}\frac{\left(\sqrt{2}\sigma_0 + \eps_n\right)}{\sigma_0}  + 2 \sqrt{2}\left(1+\kappa\right)^{3/2}\frac{\left(\sqrt{2}\sigma_0 + \eps_n\right)}{\sigma_0} \sqrt{\frac{\log\left(\frac1\alpha\right)}{\log\left(\left|I_n\right|\right)}}.
\]
But this condition is true due to our requirements on $\eps_n$.
\end{enumerate}
\end{proof}

\begin{proof}[Proof of Theorem \ref{thm:LR_dominantvar}]
Due to Theorem \ref{thm:LR_gen} we only have to show that under the assumptions of the theorem the condition \eqref{eq:cond_LR_test} is satisfied as $n \to \infty$. The given situation is $\kappa_n^2 = \Delta_n / (\sigma_0\theta_n)\left(1+o\left(\eps_n\right)\right)$. Inserting this into \eqref{eq:cond_LR_test} we have to show that
\begin{align*}
\left(2 + \frac{1}{\theta_n^2}\right) \frac{n\left|I_n\right|\Delta_n^2}{\sigma_0^2} + \frac{\Delta_n}{\sigma_0\theta_n} \frac{n \left|I_n\right|\Delta_n^2}{\sigma_0^2}  
\geq &\frac{2\Delta_n}{\sigma_0\theta_n} \log\left(\frac1{\left|I_n\right|}\right) + 2 \sqrt{\frac{n\left|I_n\right|\Delta_n^2}{\sigma_0^2} \left(2+\frac{1}{\theta_n^2}\right) \log\left(\frac{1}{\alpha \left|I_n\right|}\right)} \\[0.1cm]
&+2 \left(1+\frac{\Delta_n}{\sigma_0\theta_n}\right) \sqrt{\frac{n\left|I_n\right|\Delta_n^2}{\sigma_0} \left(2 + \frac{1}{\theta_n^2}  + \frac{2\Delta_n}{\sigma_0\theta_n}\right) \log\left(\frac1\alpha\right)}
\end{align*}
as $n \to \infty$. Inserting \eqref{eq:condition_dominantvar} and dividing by $-\log\left(\left|I_n\right|\right)$ yields that 
\begin{align*}
\frac{\left(2\sigma_0+\eps_n\right)^2}{\sigma_0^2}\left(2 \theta_n^2 +1\right)  + \frac{\left(2\sigma_0+\eps_n\right)^2}{\sigma_0^2}  \frac{\theta_n\Delta_n}{\sigma_0}\geq &2 \sqrt{\frac{\left(2\sigma_0+\eps_n\right)^2}{\sigma_0^2} \left(2\theta_n^2+1\right) \frac{\log\left(\alpha\right) + \log\left(\left|I_n\right|\right)}{\log\left(\left|I_n\right|\right)}} \\[0.1cm]
&+2 \sqrt{\frac{\left(2\sigma_0+\eps_n\right)^2}{\sigma_0^2}\left(2\theta_n^2 + 1\right) \frac{\log\left(\alpha\right)}{\log\left(\left|I_n\right|\right)}}
\end{align*}
is sufficient as $n \to \infty$ where we used that $\Delta_n / \theta_n \sim \sigma_{n}^2 = o\left(\eps_n\right)$. Some straightforward calculations and skipping all terms of order $o\left(\eps_n\right)$ shows that is enough to prove
\begin{align*}
4 + \frac{4}{\sigma_0} \eps_n > 4 \sqrt{\frac{\log\left(\alpha\right) + \log\left(\left|I_n\right|\right)}{\log\left(\left|I_n\right|\right)}}  +  \frac{2\eps_n}{\sigma_0} 4 \sqrt{\frac{\log\left(\alpha\right) + \log\left(\left|I_n\right|\right)}{\log\left(\left|I_n\right|\right)}} + 4 \sqrt{\frac{\log\left(\alpha\right)}{\log\left(\left|I_n\right|\right)}} + \frac{2\eps_n}{\sigma_0} \sqrt{\frac{\log\left(\alpha\right)}{\log\left(\left|I_n\right|\right)}}.
\end{align*}
Employing $\sqrt{a+b} \leq \sqrt{a}+\sqrt{b}$ we find the sufficient condition
\[
\frac{2}{\sigma_0} \eps_n > 8 \sqrt{\frac{\log\left(\alpha\right)}{\log\left(\left|I_n\right|\right)}}
\]
as $n \to \infty$, which directly follows from $\eps_n \sqrt{-\log\left(\left|I_n\right|\right)} \to \infty$. 
\end{proof}

\begin{proof}[Proof of Theorem \ref{thm:LR_adaptive}]
For $A_n \in \mathcal A_n$ let us abbreviate 
\begin{align*}
\bar Y_{A_n} &:= \left(n \left|I_n\right|\right)^{-1} \sum_{i: \frac{i}{n} \in A_{n}} Y_i, \\[0.1cm]
S_{A_n}^2 &:= \sum\limits_{i: \frac{i}{n} \in A_n} \left(Y_i - \bar Y_{A_n}\right)^2.
\end{align*}
In our model where the $Y_i$'s are independent Gaussian, it follows from Cochran's theorem that $S_{A_n}^2$ obeys a $\chi^2_{n\left|A_n\right|-1}$ distribution and $\bar Y_{A_n}$ an independent $\chi_1^2$ distribution. Now recall that the test statistic is given by
\[
T_n^*(Y)=\sup_{A_n\in\mathcal A_n} S(A_n),
\]
with the inner part
\[
S(A_n):= \frac{\kappa_n^2n\left|I_n\right|}{\sigma_0^2(\kappa_n^2+1)} S_{A_n}^2 + \frac{n\left|I_n\right|}{\sigma_0^2} \left(\bar Y_{A_n}\right)^2
\]
Using the above results, we readily see that $S\left(A_n\right)$ obeys the following distributions
\begin{align*}
\text{Under }H_0: &\qquad S(A_n) \sim\frac{\kappa_n^2}{\kappa_n^2+1} \chi^2_{n\left|I_n\right|-1} \left(0\right) + \chi_1^2 \left(0\right), \\[0.1cm]
\text{Under }H_1^n: &\qquad S(I_n) \sim \kappa_n^2 \chi^2_{n\left|I_n\right|-1} \left(0\right) + \left(\kappa_n^2+1\right)\chi_1^2 \left(\frac{n\left|I_n\right|\Delta_n^2}{\sigma_0^2\left(1+\kappa_n^2\right)}\right)
\end{align*}
Here $I_n$ in the alternative denotes the true position of the jump. 

For simplicity denote by $\chi_j^2$ a chi-squared random variable with $j$ degrees of freedom and by $\xi$ a standard normal variable independent of $\chi_j^2$. Now applying Lemma \eqref{eq:tail_upper} with $k = 2$, $b_1 = \frac{\kappa_n^2}{\kappa_n^2+1}$, $d_1= n \left|I_n\right|-1$, $b_2 = 1$, $d_2 = 1$ and $a_1 = a_2 = 0$ we get
\begin{align*}
\Prob{H_0}{\sup_{A_n\in\mathcal A_n} S(A_n)>c_{\alpha,n}^*} \leq & \frac1{\left|I_n\right|} \Prob{H_0}{S(A_n)>c_{\alpha,n}^*} \\[0.1cm]
=&\frac1{\left|I_n\right|} \Prob{}{ \frac{\kappa_n^2}{\kappa_n^2+1} \chi^2_{n\left|I_n\right|-1} + \xi^2>c_{\alpha,n}^*} \\[0.1cm]
\leq &\alpha.
\end{align*}
Let us turn to the type II error. We will apply \eqref{eq:tail_lower} with $k = 2$, $b_1 = \kappa_n^2$, $d_1= n \left|I_n\right|-1$, $b_2 = \kappa_n^2+1$, $d_2 = 1$, $a_1 = 0$ and $a_2^2 = \frac{n\left|I_n\right|\Delta_n^2}{\sigma_0^2\left(1+\kappa_n^2\right)}$. Denote
\[
y_{n,\alpha}=\kappa_n^2n\left|I_n\right| + 1 + \frac{\Delta_n^2 n\left|I_n\right|}{\sigma_0^2}-2\sqrt{\left[\kappa_n^4 n\left|I_n\right| + 2 \kappa_n^2 + 1 + \frac{2\left(1+\kappa_n^2\right)\Delta_n^2 n\left|I_n\right|}{\sigma_0^2}\right]\log\frac1\alpha}.
\]
We have
\begin{align*}
\sup\limits_{B_n \in \mathcal A_n} \Prob{B_n}{\sup\limits_{A_n\in\mathcal A_n} S(A_n)<y_{n,\alpha}} \leq & \sup\limits_{B_n \in \mathcal A_n} \inf\limits_{A_n\in\mathcal A_n} \Prob{B_n}{S(A_n)<y_{n,\alpha}} \\[0.1cm]
\leq &\sup\limits_{B_n \in \mathcal A_n} \Prob{B_n}{S(B_n)<y_{n,\alpha} }\\[0.1cm]
=& \Prob{} {\kappa_n^2 \chi^2_{n\left|I_n\right|-1} \left(0\right) + \left(1+\kappa_n^2\right) \left(\xi + \sqrt{\frac{\Delta_n^2 n\left|I_n\right|}{\sigma_0^2(\kappa_n^2+1)}}\right)^2<y_{n,\alpha}} \\[0.1cm]
=& \Prob{} {\frac{\kappa_n^2}{\kappa_n^2+1} \chi^2_{n\left|I_n\right|-1} \left(0\right) + \left(\xi + \sqrt{\frac{\Delta_n^2 n\left|I_n\right|}{\sigma_0^2(\kappa_n^2+1)}}\right)^2<\frac{y_{n,\alpha}}{\kappa_n^2+1}} \\[0.1cm]
\leq &\alpha.
\end{align*}
Thus to find the detection boundary conditions we need to investigate the inequality $y_{n,\alpha}\ge c_{n,\alpha}^*$:
\begin{align*}
&\kappa_n^2n\left|I_n\right| + 1 + \frac{\Delta_n^2 n\left|I_n\right|}{\sigma_0^2}-2\sqrt{\left[\kappa_n^4 n\left|I_n\right| + 2 \kappa_n^2 + 1 + \frac{2\left(1+\kappa_n^2\right)\Delta_n^2 n\left|I_n\right|}{\sigma_0^2}\right]\log\frac1\alpha}\\[0.1cm]
\geq& \frac{\kappa_n^2 n\left|I_n\right| +1}{\kappa_n^2+1} +2\sqrt{\frac{\kappa_n^4n \left|I_n\right| + 2 \kappa_n^2 + 1}{\left(\kappa_n^2 + 1\right)^2}\log\left(\frac{1}{\alpha\left|I_n\right|}\right)} -2\log\left(\alpha\left|I_n\right|\right).
\end{align*}
First of all we can rewrite this inequality as follows,
\begin{align}
\frac{\kappa_n^4 n\left|I_n\right|}{\kappa_n^2+1}&+ \frac{\kappa_n^2}{\kappa_n^2+1}+\frac{\Delta_n^2 n\left|I_n\right|}{\sigma_0^2}-2\sqrt{\left[\kappa_n^4 n\left|I_n\right| + 2 \kappa_n^2 + 1 + \frac{2\left(1+\kappa_n^2\right)\Delta_n^2 n\left|I_n\right|}{\sigma_0^2}\right]\log\frac1\alpha}  \nonumber\\[0.1cm]
&\geq 2\sqrt{\frac{\kappa_n^4n \left|I_n\right| + 2 \kappa_n^2 + 1}{\left(\kappa_n^2 + 1\right)^2}\log\left(\frac{1}{\alpha\left|I_n\right|}\right)} -2\log\left(\alpha\left|I_n\right|\right).\label{ineq_cT}
\end{align}
Now we analyze it in the three regimes as usual:
\begin{itemize}
\item Dominant mean regime: Inserting $\kappa_n^2 / \Delta_n = o \left(\eps_n\right)$ and ignoring all $o \left(\eps_n\right)$-terms in the following, we find that it is sufficient to prove
\[
\frac{\Delta_n^2 n \left|I_n\right|}{\sigma_0^2} \geq 2 \log\left(\frac{1}{\left|I_n\right|}\right) + 2\sqrt{2} \sqrt{\frac{\Delta_n^2 n \left|I_n\right|}{\sigma_0^2}} \sqrt{\log\left(\frac{1}{\alpha}\right)}.
\]
Inserting \eqref{eq:adaptive_condition_homogeneous} and dividing by $-\log\left(\left|I_n\right|\right)$ we find that the above is the case if
\[
2 \sqrt{2} \sigma_0^{-1} \eps_n \geq 4 \sqrt{\frac{\log\left(\alpha\right)}{\log\left(\left|I_n\right|\right)}}
\]
which is true by assumption.
\item Equilibrium regime: Inserting $\kappa_n^2 / \Delta_n = o \left(\eps_n\right)$ and ignoring all $o \left(\eps_n\right)$-terms in the following, we find that it is sufficient to prove
\begin{multline*}
\frac{\Delta_n^2 n \left|I_n\right|}{\sigma_0^2} \left(1+c^2\right) \\[0.1cm] 
\geq 2 \log\left(\frac{1}{\left|I_n\right|}\right) + 2 \sqrt{\left(c^2 + 2\right)\frac{\Delta_n^2 n \left|I_n\right|}{\sigma_0^2}} \sqrt{\log\left(\frac{1}{\alpha}\right)} + 
2 c\sqrt{\frac{\Delta_n^2 n \left|I_n\right|}{\sigma_0^2}} \sqrt{\log\left(\frac{1}{\alpha\left|I_n\right|}\right)}.
\end{multline*}
Inserting \eqref{eq:adaptive_condition_equilibrium} and dividing by $-\log\left(\left|I_n\right|\right)$ we find that the above is the case if
\begin{multline*}
\frac{1+c^2}{\sigma_0^2} C^2 + 2 \frac{1+c^2}{\sigma_0^2} C \eps_n 
+\frac{(1+c^2)\eps_n^2}{\sigma_0^2}\\
\geq 2 + 2 C \frac{c}{\sigma_0} + \frac{2c}{\sigma_0} \eps_n + \frac{2(C+\eps_n)}{\sigma_0}\left(\sqrt{c^2 + 2}+c \right) \sqrt{\frac{\log\left(\alpha\right)}{\log\left(\left|I_n\right|\right)}}
\end{multline*}
which is true by the definition of $C$ and our assumption on $\eps_n$.
\item Dominant variance regime: Inserting $\kappa_n^2 = \Delta_n/ (\sigma_0 \theta_n)$ and ignoring all $o \left(\eps_n\right)$-terms in the following, we find that it is sufficient to prove
\begin{multline*}
\frac{\Delta_n^2 n \left|I_n\right|}{\sigma_0^2} \left(1+\frac{1}{\theta_n^2}\right) \\[0.1cm]
\geq 2 \log\left(\frac{1}{\left|I_n\right|}\right) + 2 \sqrt{\left(2 + \frac{1}{\theta_n^2}\right)\frac{\Delta_n^2 n \left|I_n\right|}{\sigma_0^2}} \sqrt{\log\left(\frac{1}{\alpha}\right)} + \frac{2}{\theta_n}\sqrt{\frac{\Delta_n^2 n \left|I_n\right|}{\sigma_0^2}} \sqrt{\log\left(\frac{1}{\alpha\left|I_n\right|}\right)}.
\end{multline*}
Let $C = 1+\sqrt{3}$. Inserting \eqref{eq:adaptive_condition_dominantvar}, dividing by $-\log\left(\left|I_n\right|\right)$, and using $\theta_n^2 = o \left(\eps_n\right)$ we find that the above holds true if
\[
C^2 + 2 \frac{C}{\sigma_0^2} \eps_n \geq 2 + 2 C + \left(2 C + 2 C^2\right)\sqrt{\frac{\log\left(\alpha\right)}{\log\left(\left|I_n\right|\right)}}
\]
which is true by the definition of $C$ and our assumption on $\eps_n$.
\end{itemize}
\end{proof}

\small
\bibliography{HBR_ref}{}
\bibliographystyle{plain}
\end{document}